\theoremstyle{plain}
\DeclareMathAlphabet{\pazocal}{OMS}{zplm}{m}{n}
\newtheorem*{theorem*}{Theorem}
\renewcommand{\epsilon}{\varepsilon}
\newcommand{\N}{{\mathbb N}}
\newcommand{\R}{{\mathbb R}}
\newcommand{\C}{{\mathbb C}}
\newcommand{\Z}{{\mathbb Z}}
\newcommand{\E}{{\mathbf E}}
\newcommand{\vol}{{\operatorname{Vol}}}
\newcommand{\Oc}{\mathcal{O}}
\numberwithin{equation}{section}
\newtheorem{theorem}{Theorem}[section]
\newtheorem{theo}[theorem]{{\sc Theorem}}
\newtheorem{cor}[theorem]{{\sc Corollary}}
\newtheorem{lem}[theorem]{{\sc Lemma}}
\newtheorem{prop}[theorem]{{\sc Proposition}}
\newtheorem{lemma}[theorem]{{\sc Lemma}}
\newtheorem{definition}[theorem]{{\sc Definition}}
\theoremstyle{definition}
\newtheorem{defn}[theorem]{{\sc Definition}}
\theoremstyle{remark}
\newtheorem{rem}[theorem]{Remark}
\newcommand{\prob}{\pazocal{P}r}
\newcommand{\Dc}{\mathcal{D}}
\newcommand{\Sc}{S}
\newcommand{\Ncc}{\mathcal{N}}
\newcommand{\gfr}{\mathfrak{g}}
\newcommand{\nod}{\pazocal{N}}
\newcommand {\M} {\pazocal{M}}
\newcommand {\Lc} {\mathcal{L}}
\newcommand {\Tb} {\mathbb{T}}
\title[Direction distribution]{Direction distribution for nodal components of random band-limited functions on surfaces}
\author{Suresh Eswarathasan}
\address{Department of Mathematics, Dalhousie University, Chase Building, Halifax, Nova Scotia, Canada}
\email{sr766936@dal.ca}
\author{Igor Wigman}
\address{Department of Mathematics, King's College London, Strand Campus, London, United Kingdom}
\email{igor.wigman@kcl.ac.uk}
\date{\today}
\begin{document}

\begin{abstract}
Let $(\M,g)$ be a smooth compact Riemannian surface with no boundary. Given a smooth vector field $V$ with finitely many zeroes on $\M$, we study the distribution of the number of tangencies to $V$ of the nodal components of random band-limited functions. It is determined that in the high-energy limit, these obey a universal deterministic law, independent of the surface $\M$ and the vector field $V$, that is supported precisely on the even integers $2 \Z_{> 0}$.
\end{abstract}

\maketitle


\section{Introduction}

\subsection{Nodal components of random functions}

Given a ``nice" function
$f:\R^{d}\rightarrow\R$, $d\ge 2$, or $f:\M\rightarrow\R$ with $\M$ a compact Riemannian $d$-manifold, the nodal set
of $f$ is its zero set $f^{-1}(0)$; if $f$ is Morse, then its nodal set is a smooth hypersurface in both the ``Euclidean" and
``Riemannian" contexts.
In either scenario, the {\em nodal components} of $f$ are the connected components of the nodal set, and
the {\em nodal domains} are the (positive or negative) connected components of the {\em complement} of the nodal set.
The most basic question one is interested is in the {\em nodal count} of $f$, i.e.
the total number of the nodal components of $f$, also yielding the total number of nodal domains of $f$ via Euler's identity,
at least, in the Riemannian context $f:\M\rightarrow\R$ (where the nodal set is of finite
hypersurface volume, thanks to the assumed compactness of $\M$).

To make sense of the analogous question in the Euclidean context $f:\R^{d}\rightarrow\R$, one usually takes a large parameter $R>0$
and studies the asymptotics of the nodal count of the restriction $f|_{B(R)}$ to the centred radius-$R$ ball $B(R)\subseteq \R^{d}$, as
$R\rightarrow\infty$. Other than the nodal count, one might refine the said question by separately counting the nodal components of $f$ belonging to
e.g. a given diffeomorphism type or otherwise, study the mutual positions of the components (``nestings") etc.

\vspace{2mm}

Understanding the ``typical" nature of the nodal structures of {\em Gaussian random fields}, rather than individual functions,
is an actively pursued subject within several disciplines, in the last few years in particular.
Let $F:\R^{2}\rightarrow \R$ (more generally, $F:\R^{d}\rightarrow\R$) be a Gaussian random field, that will be assumed {\em stationary},
and $R>0$ be a large parameter.
Then the number $\nod(F,R)$ of nodal components of $F$ {\em fully contained} inside $B(R)$ is a random variable; alternatively, one
could count those components merely {\em intersecting} $B(R)$. It was shown by Nazarov-Sodin ~\cite{NS09,So12,NS15} that, under very
mild smoothness and non-degeneracy assumptions on the law of $F$, there exists a constant $c_{NS}\ge 0$ so that
$\frac{\nod(F,R)}{\vol(B(R))}$ converges to $c_{NS}$ both a.s. and in mean, i.e.
\begin{equation}
\label{eq:cNS conv mean}
\E\left[\left| \frac{\nod(F,R)}{\vol(B(R))} - c_{NS}  \right|\right] \rightarrow 0
\end{equation}
as $R\rightarrow\infty$. Sarnak-Wigman ~\cite{SW18}, and Beliaev-Wigman ~\cite{BW17} further developed the techniques
due to Nazarov-Sodin by considering the more refined questions of separate nodal counts belonging to particular diffeomorphism types or given hypersurface measure.

\subsection{Random band-limited functions on smooth manifolds} \label{s:band_lim}

Rather than for its own intrinsic interest, this Euclidean scenario above serves as a ground state for the Riemannian one, namely,
as its scaling limit. Here we consider
a sequence (``ensemble") $\{f_{l}\}_{l\in\Lc}$, of smooth random Gaussian fields $f_{l}:\M\rightarrow\R$, satisfying a natural scaling
property, with the scaling parameter $l$ lying in some countable set $\Lc$, and our objective is to study the distribution
of the {\em total} number of nodal components of $f_{l}$ as $l\rightarrow\infty$, their typical topology, geometry, relative positions,
and other important properties. A particularly important such ensemble,
motivating the work ~\cite{SW18}, is
the ensemble of band-limited functions, depending on a fixed number $\alpha\in [0,1]$. This includes the important ensembles of
random degree-$l$ spherical harmonics (see \S\ref{sec:spher harm RMW} below), and Arithmetic Random Waves (\S\ref{sec:ARW} below).

It is well-known that, since we assumed $\M$ to be a smooth, compact, Riemannian $d$-manifold, the space $L^{2}(\M)$ of square-summable functions on $\M$
has an orthonormal basis $\{\varphi_{j}\}_{j=1}^{\infty}$ consisting of Laplace eigenfunctions, i.e.
\begin{equation}
\label{eq:Helmholts}
\Delta \phi_{j}+t_{j}^{2}\phi_{j} = 0,
\end{equation}
where $\Delta$ is the Laplace-Beltrami operator on $\M$ acting on $L^{2}$, and $\{t_{j}\}_{j\ge 0}$ is its purely discrete spectrum
$$0=t_{0}\le t_{2}\le\ldots,$$ satisfying $t_{j}\rightarrow\infty$. For a ``band" $\alpha\in [0,1)$ and spectral
parameter $T>0$ (with the intention of taking the limit $T\rightarrow\infty$), we define ~\cite{SW18} the random
band-limited functions to be
\begin{equation}
\label{eq:fT band lim alpha<1}
f_{T}(x)=f_{\alpha;T}(x)=\frac{1}{\left|\{j:\alpha\cdot T<t_{j}<T\}\right|^{1/2}}\sum\limits_{\alpha\cdot T<t_{j}<T}c_{j}\varphi_{j}(x),
\end{equation}
where the $c_{j}$ are i.i.d. standard Gaussian. For $\alpha=1$ the above definition usually makes no sense, as the summation on the r.h.s.
of \eqref{eq:fT band lim alpha<1} is typically one summand (or empty), so in this case we interpret the summation as
\begin{equation}
\label{eq:fT band lim alpha=1}
f_{1;T}(x)=\frac{1}{\left|\{j:T-\eta(T)<t_{j}<T\}\right|^{1/2}}\sum\limits_{T-\eta(T)<t_{j}<T}c_{j}\varphi_{j}(x),
\end{equation}
with the convention $\eta(T)=o_{T\rightarrow\infty}(T)$ but $\eta(T)\rightarrow\infty$.

Alternatively, $f_{T}(\cdot)$ is the (uniquely defined) centred Gaussian random field of covariance
\begin{equation}
\label{eq:KT covar func}
K_{T}(x,y)=K_{\alpha;T}:=\E[f_{\alpha;T}(x)\cdot f_{\alpha;T}(y)] = \sum \phi_{j}(x)\cdot\phi_{j}(y),
\end{equation}
$x,y\in\M$, where the summation is over the same energy window as \eqref{eq:fT band lim alpha<1} for $\alpha<1$
(resp. \eqref{eq:fT band lim alpha=1} for $\alpha=1$); we identify $K_{T}(\cdot,\cdot)$ as the {\em spectral projector}
for the corresponding energy window. The kernel $K_{T}(\cdot, \cdot)$ and its derivatives possess scaling limits once appropriately scaled by $T$,
see \S\ref{sec:proofs outline} for more details, and in particular \eqref{eq:covar func band lim der conv}.

\subsubsection{Random spherical harmonics and Berry's Random Wave Model}
\label{sec:spher harm RMW}

It is well-known that the Laplace eigenfunctions on the $2$-sphere $\Sc^{2}\subseteq \R^{3}$, that is the spherical harmonics, are restrictions of harmonic polynomials of some degree $l\ge 1$. The space of degree-$l$ spherical harmonics is of
dimension $2l+1$, so given a number $l$ we may obtain an $L^{2}$-orthonormal basis $\Phi_{l}:=\{\eta_{l;1},\ldots,\eta_{l;2l+1}\}$,
and define
\begin{equation}
\label{eq:Tl spher harm}
T_{l}(x):=\frac{\sqrt{4\pi}}{\sqrt{2l+1}}\sum\limits_{m=1}^{2l+1}a_{m}\eta_{l;m}(x)
\end{equation}
with $\{a_{m}\}_{m=1}^{2l+1}$ standard Gaussian i.i.d.; the law of the random spherical harmonics $T_{l}(\cdot)$ is
invariant w.r.t. the choice of $\Phi_{l}$. The random fields $T_{l}(\cdot)$ are the Fourier components of every
rotation invariant random field on $\Sc^{2}$, hence its importance in a variety of disciplines within mathematics,
physics and cosmology.

Equivalently, the random spherical harmonic $T_{l}(\cdot)$ is a centred
Gaussian random field defined via the covariance function $$r_{l}(x,y)=P_{l}(\cos d(x,y)),$$ $x,y\in\Sc^{2}$,
$d(\cdot,\cdot)$ is the spherical geodesic distance, and $P_{l}(\cdot)$ is the degree-$l$ Legendre polynomial.
Since, by the standard Hilb's asymptotics ~\cite{Szego}, $$P_{l}(\cos\theta)\approx J_{0}((l+1)\theta),$$
the geometry of the nodal line of $T_{l}$ could be compared to the geometry of the nodal line of the stationary isotropic
random field on $\R^{2}$ defined by the covariance function $J_{0}(\|x\|)$, usually referred to ``Berry's Random Wave Model"
(RWM), for it is postulated ~\cite{Berry 1977} to be a stand-in for deterministic Laplace eigenfunctions on generic chaotic surfaces.

\subsubsection{Arithmetic random waves}
\label{sec:ARW}

The Arithmetic Random Waves are random Gaussian toral Laplace eigenfunctions $\{f_{n}:\Tb^{2}\rightarrow\R\}_{n\in S}$,
where $\Tb^{2}=\R^{2}/\Z^{2}$ is the $2$-dimensional standard torus, and $S=\{a^{2}+b^{2}:\:a,b\in\Z\}$ is the set of all numbers
expressible as sum of two integer squares. For $n\in S$ let $$\Lambda_{n}:=\{(a,b)\in \Z^{2}:\: a^{2}+b^{2}=n\}$$ be the set
of all lattice points lying on the centred circle of radius $\sqrt{n}$. We may define
\begin{equation}
\label{eq:fn ARW def}
f_{n}(x)=\sum\limits_{\lambda\in\Lambda_{n}} a_{\lambda}e(\langle \lambda,x  \rangle),
\end{equation}
where $e(t)=e^{2\pi i t}$, and the $\{a_{\lambda}\}_{\lambda\in\Lambda_{n}}$ are complex standard Gaussian i.i.d., save to the condition
$a_{-\lambda}=\overline{a_{\lambda}}$, so that $f_{n}$ are real valued.

The random fields $f_{n}$ are the ``Arithmetic Random Waves"
that serve as a motivation to our research, following the work ~\cite{RudWig2018}; $f_{n}$ should be compared
to the band-limited functions \eqref{eq:fT band lim alpha=1}, with $n\approx T^{2}$
(and, thanks to the spectral multiplicity, we may take an infinitesimal energy window).
Equivalently to the explicit definition \eqref{eq:fn ARW def},
$f_{n}$ could be defined as the stationary centred Gaussian random field on $\Tb^{2}$ with the covariance function
\begin{equation}
\label{eq:ARW covar def}
r_{n}(x)=\E[f_{n}(y)\cdot f_{n}(y+x)] = \frac{1}{|\Lambda_{n}|}\sum\limits_{\lambda\in\Lambda_{n}}
e(\langle \lambda,x \rangle),
\end{equation}
or via the spectral measure of $f_{n}$, i.e. the atomic measure
\begin{equation}
\label{eq:ARW spec measure rho}
\rho_{n}:=\frac{1}{|\Lambda_{n}|}\sum\limits_{\lambda\in\Lambda_{n}}\delta_{\lambda/\sqrt{n}}
\end{equation}
on $\Sc^{1}\subseteq\R^{2}$.

\subsection{Direction distribution and statement of the principal result}

\subsubsection{Direction distribution on $\mathbb{T}^d$}

In $2$d Rudnick and Wigman ~\cite{RudWig2018} proposed to study the {\em ``direction distribution"} of the nodal line, a different quantity they
introduced for functions defined on the $2$d standard torus $\Tb^{2}=\R^{2}/\Z^{2}$, also related to ~\cite{Swerling}.
Let $\zeta\in\Sc^{1}$ be
a direction, and $f:\Tb^{2}\rightarrow\R$ any ``nice" function. The number
\begin{equation}
\label{eq:Nzeta torus}
\mathcal{N}_{\zeta}(f):= \left|\left\{x\in \Tb^{2}:f(x)=0,\, \frac{\nabla f(x)}{\|\nabla f(x)\|}=\pm \zeta  \right\} \right|
\end{equation}
is the number of
points on the nodal line of $f$ normal to $\zeta$; equivalently, $\mathcal{N}_{\zeta}(f)$ is the number of nodal points of $f$ in direction
$\pm\xi:=\zeta^{\perp}$. The direction distribution carries a lot of information on the nodal line of $f$; for example, the number of
nodal components of $f$ on $\Tb^{2}$ essentially majorizes ~\cite{KuWiAdv} the nodal count of $f$ for every $\zeta$,
as every nodal component of $f$ of trivial
homology contains at least two points tangent to $\xi$, and it is usually easy to control the contribution of all the other
components.

For the $\mathcal{N}_{\zeta}(\cdot)$ corresponding to
toral Laplace eigenfunctions Rudnick and Wigman ~\cite{RudWig2018} gave the optimal upper bounds, as well as evaluated their total expected number for the associated Gaussian random model, ``Arithmetic Random Waves" \eqref{eq:fn ARW def}, precisely, appealing to the Kac-Rice method. Their result, however, does not allow for the separation of the $\xi$-tangencies into nodal components of $f$, i.e. how many points on a
{\em given component} represent $\xi$, or how many nodal components of $f$ represent the direction $\xi$ {\em precisely} $k$ times.

\subsubsection{Direction distribution on curved surfaces}

The purpose of this manuscript is two-fold. First, we propose, what it seems, a natural generalisation of the direction distribution concept for smooth manifolds with curvature, and study its properties for the random band-limited functions \eqref{eq:fT band lim alpha<1} (and
\eqref{eq:fT band lim alpha=1}).
Second, in this random setting (random band-limited functions, including, in particular Arithmetic Random Waves) we aim at refining
the said results due to Rudnick-Wigman by keeping separate accounts for the number of $\xi$-tangencies on individual components rather than merely the total number of $\xi$-tangencies (though the latter is useful for our purposes).

Let us now introduce a space of vector fields that play a central role in our article. We let $\mathcal{V}(\M)$ be the class of all $C^{\infty}$-smooth vector fields on $\M$, with finitely many zeros; the class $\mathcal{V}(\M)$ is non-empty for
every smooth $\M$ by Lemma \ref{l:vfs_finite_zeroes}. First, given $f:\M\rightarrow\R$ a nice function, and $V\in\mathcal{V}(\M)$, we introduce
\begin{equation*}
\mathcal{N}_{V}(f)= \#\{x:\:V(x)\ne 0,\, f(x)=Vf(x)=0\},
\end{equation*}
the announced generalisation of the ``flat" direction distribution, with the variable direction $V(x)$ in place of
$\xi=\zeta^{\perp}$ in \eqref{eq:Nzeta torus}. Other than for the torus, a precise asymptotic expression for the total number of
$V$-tangencies for the random spherical harmonics \eqref{eq:Tl spher harm} was recently obtained \cite{E18} by the first author.  Related results were obtained by Dang and Rivi\`ere \cite{DaRiJEMS} following some substantial and general results of Gayet and Welschinger \cite{GaWeJussieu}.

Now we separate the tangency counts for individual nodal components of $f$: for a nodal component $\gamma\subseteq f^{-1}(0)$ define
the number of tangencies w.r.t. $V$,
\begin{equation*}
T_{V}(\gamma)= \#\{x\in \gamma:\: V(x)\ne 0,\, f(x)=Vf(x)=0\},
\end{equation*}
and
\begin{equation*}
\nod_{V}(f,k) :=\#\{\gamma  \subseteq f^{-1}(0):\: T_{V}(\gamma)=k\}
\end{equation*}
is the total number of nodal components of $f$ with precisely $k$ tangencies w.r.t. $V$,
so that we have the
identity
\begin{equation}
\label{eq:tot numb tang=sum mean}
\mathcal{N}_{V}(f) = \sum\limits_{k=1}^{\infty}k\cdot \nod_{V}(f,k).
\end{equation}
Finally, we let
\begin{equation*}
\nod (f) = \sum\limits_{k=0}^{\infty}\nod_{V}(f,k)
\end{equation*}
be the total number of the nodal components of $f$.

\subsubsection{Statement of the principal result}

Our main result concerns the asymptotic law for the direction distribution measure corresponding to the band-limited functions
$f_{\alpha;T}(\cdot)$ in \eqref{eq:fT band lim alpha<1}-\eqref{eq:fT band lim alpha=1}.
Following the approach of Sarnak-Wigman ~\cite{SW18},
we may incorporate, or ``encapsulate", all the individual counts $\nod_{V}(f,\cdot)$ into a single (random) probability measure,
the ``direction distribution measure",
\begin{equation}
\label{eq:mu(f) ddm def}
\mu_{f}(V) =\frac{1}{\nod(f)}\sum\limits_{\gamma\subseteq f^{-1}(0)}\delta_{T_{V}(\gamma)} =\frac{1}{\nod(f)}\sum\limits_{k=0}^{\infty}\nod_{V}(f,k)\cdot\delta_{k},
\end{equation}
on $\Z_{\ge 0}$.
Given two probability measures $\mu_{1},\mu_{2}$ on $\Z$ we will use the total variation distance function
\begin{equation}
\label{eq:D tot var dist def}
\Dc(\mu_{1},\mu_{2}) = \sup\limits_{F\subseteq \Z_{\ge 0}} |\mu_{1}(F)-\mu_{2}(F)|.
\end{equation}

\begin{theo}
\label{thm:main meas conv}

Given $\alpha\in [0,1]$, there exists a (deterministic) probability measure $\mu_{\alpha}$ on $\Z_{\ge 0}$,
supported on the positive even integers $2\Z_{>0}$, so that for all $V\in\mathcal{V} (\M)$ and every $\epsilon>0$,
\begin{equation}
\label{eq:main meas conv}
\lim\limits_{T\rightarrow\infty}\prob\left(\Dc(\mu_{f_{\alpha;T}}(V),\mu_{\alpha})>\epsilon\right)=0,
\end{equation}
where $\Dc(\cdot,\cdot)$ is the total variation distance \eqref{eq:D tot var dist def}.

\end{theo}

\subsubsection{On the principal result}

The band-limited functions \eqref{eq:fT band lim alpha<1}-\eqref{eq:fT band lim alpha=1} is a particular case of an aforementioned ensemble of Gaussian random fields possessing a natural scaling by the wave number $T$, see \S\ref{sec:proofs discussions} below, and, in particular \eqref{eq:covar func band lim der conv}.
Our proofs are applicable for such ensembles with scaling,
in a more general scenario than merely the band-limited functions, as long as an analogue of \eqref{eq:covar func band lim der conv} holds,
with $T$ replaced by the scaling parameter (see e.g. the important application on Arithmetic Random Waves from \S\ref{sec:ARW},
in \S\ref{sec:appl ARW} below).
In general, in a scenario like \eqref{eq:main meas conv}, when probability measures (on $\Z_{\ge 0}$) weak-$*$
converge to a limit measure $\mu_{\alpha}$, it forces the limit measure to have $\mu_{\alpha}(\Z_{\ge 0}) \le 1$
by Fatou's Lemma, and $\mu_{\alpha}(\Z_{\ge 0}) < 1$ would mean escape of probability to infinity. As part
of Theorem \ref{thm:main meas conv}, we will be able to rule this kind of losses of mass, cf. \cite[Theorem 1.1]{SW18}.

We stress that, despite the fact that the measures $\mu_{f_{\alpha;T}}$ are {\em random}, the limit measure $\mu_{\alpha}$
is {\em deterministic}, and, notably, in addition, $\mu_{\alpha}$ is {\em independent} of $V$.
Thanks to the asymptotic scaling \eqref{eq:covar func band lim der conv}, we will be able to compare the nodal counts $\nod_{V}(f,k)$ to
the nodal counts of some Gaussian isotropic random field $\gfr_{\alpha}(\cdot)$ on $\R^{2}$
(to be defined in \S\ref{sec:proofs outline}),
with $V\equiv \xi$ a constant vector field,
up to an admissible error term. The measures $\mu_{\alpha}$ are $V$-independent, since, by the isotropic property of $\gfr_{\alpha}(\cdot)$,
the corresponding distribution of $\xi$-tangencies counts is independent of $\xi$.
The support of $\mu_{\alpha}$ is a manifestation of the fact that the number of $\xi$-tangencies
for a simple smooth planar curve is necessarily {\em even}, unless a degeneracy occurs, with probability $0$ (though,
strictly speaking, it is possible to construct simple curve with an odd number of $\xi$-tangencies). However,
for $k>0$ even it is possible to construct a simple curve with number of $\xi$-tangencies being precisely $k$, that occurs
as a nodal component of the scaled random field with positive density (the most subtle or delicate case being $\alpha=1$, cf.
~\cite[Proposition $5.3$]{SW18} and ~\cite{CanSar18}).
We refer the reader to \S\ref{sec:proofs outline} for more details on proofs and intuitions as of why these peculiarities hold.

\subsubsection{$T^*\M$ and conormal cycles}
We conclude this section by emphasizing that the microlocal geometry of nodal sets, or more specifically the interaction between ``cotangent/conormal spaces" to nodal sets and various other geometric quantities of these submanifolds, is a natural topic of study. The work of Dang and Rivi\`ere gives asymptotics pertaining to the equidistribution (in $T^* \M$) of ``conormal cycles" for $f_T$ on general compact manifolds \cite{DaRiJEMS}, with conormal cycles being phase-space quantities related to the conormal bundle of $f^{-1}_T(0)$, namely $N^*(\{ f_T = 0 \}) \subset T^* \M$.  In other words, for example in odd dimensions, the authors show that the expected value of a natural and re-scaled volume measure on the conormal bundle to the nodal set $N^*(\{ f_T = 0 \})$ converges to a uniform volume measure on $T^*\M$ as $T \rightarrow \infty$.
It is worth noting that Dang-Rivi\`ere were themselves motivated by the work \cite{GaWeJussieu} on expected Betti numbers for nodal sets associated to elliptic pseudodifferential operators; a local refinement of their lower bound was very recently obtained by Wigman \cite{Wig19}.

\subsection{Applications to arithmetic random waves}

\label{sec:appl ARW}

Recall the Arithmetic Random Waves $f_{n}$ in \eqref{eq:fn ARW def} for $n\in S$, the set of numbers expressible as sum of two squares, the corresponding covariance function \eqref{eq:ARW covar def},
and their spectral measure \eqref{eq:ARW spec measure rho}.
It is well-known ~\cite{KaKo,ErHa},
that for a ``generic" (density-$1$) sequence $\{n\}\subseteq S$, the measures $\rho_{n}$ equidistribute
on $\Sc^{1}$, i.e.
\begin{equation}
\label{eq:rhon equidistr}
\rho_{n}\Rightarrow \frac{d\theta}{2\pi}.
\end{equation}
However, there exist ~\cite{Cil,KuWiMAnn} other weak-$*$ partial limits of the sequence $\{\rho_{n}\}_{n\in S}$.
It was established ~\cite{KKW,So12,KuWiAdv}, that for the nodal structures of $f_{n}$ to exhibit a limit law,
it is essential to divide $S$ into
subsequences whose corresponding $\rho_{n}$ obey a limiting distribution, i.e. take a subsequence $\{n\}\subseteq S$
so that $\rho_{n}\Rightarrow\tau$ for some probability measure $\tau$ on $\Sc^{1}$. In this case
the corresponding covariance functions $r_{n}$ converge uniformly locally to $r$, the (inverse) Fourier transform of
$\tau$, in the sense of \eqref{eq:covar func band lim der conv}.

Let $\nod(f_{n})$ be the total number of nodal components of $f_{n}$. A combination of the techniques by Nazarov-Sodin
~\cite{So12,NS15} and ~\cite{KuWiAdv} implies that for every sequence $\{n\}\subseteq S$ so that
\begin{equation*}
\rho_{n}\Rightarrow \tau
\end{equation*}
for some probability measure $\tau$ on $\Sc^{1}$, there exists a constant $c_{NS}(\tau)\ge 0$ so that
\begin{equation*}
\E[\nod(f_{n})]= c_{NS}(\tau)\cdot n +o_{n\rightarrow\infty}(1).
\end{equation*}
Moreover, $c_{NS}(\tau)=0$, if and only if $\tau$ is either the ``Cilleruelo" measure
\begin{equation*}
\tau_{0}=\frac{1}{4}\left(\delta_{\pm 1}+\delta_{\pm i}\right),
\end{equation*}
i.e. the atomic probability measure supported on the $4$ antipodal points $\pm 1$ and $\pm i$ (viewing $\R^{2}\cong \C$),
or its tilted by $\pi/4$ variant
\begin{equation*}
\widetilde{\tau_{0}}=\frac{1}{4}\left(\delta_{\pm \pi/4}+\delta_{3\pi/4}\right).
\end{equation*}
The probability measures $\tau_{0}$ and $\widetilde{\tau_{0}}$ on $\Sc^{1}$ are the only measures
satisfying all the underlying symmetries (invariance w.r.t. rotation by $\pi/2$ and complex conjugation) that
are supported on $4$ points only.

\vspace{2mm}

Though not applicable as a black box, our Theorem \ref{thm:main meas conv} (or, rather, the associated techniques)
yields the following extension of the said results
concerning the $V$-direction distribution measures $\mu_{f_{n}}(V)$ of $f_{n}$
for some $V\in\mathcal{V}(\Tb^{2})$ (for example, $V$ could be the constant vector field
$V\equiv \xi$), defined as in \eqref{eq:mu(f) ddm def}.
Let $V\in\mathcal{V}(\Tb^{2})$, and $\{n\}\subseteq S$ be a subsequence so that
\begin{equation*}
\rho_{n}\Rightarrow \tau
\end{equation*}
for some $\tau\ne \tau_{0},\widetilde{\tau_{0}}$.
Then there exists a deterministic probability measure $\mu_{\tau;V}$ on $\Z_{\ge 0}$, so that
\begin{equation*}
\Dc\left( \mu_{f_{n}}(V), \mu_{\tau;V}  \right) \rightarrow 0,
\end{equation*}
as $n\rightarrow\infty$. In addition, the support of $\mu_{\tau;V}$ is contained in the set of positive even
integers $2\Z_{>0}$.

We emphasize that for the Arithmetic Random Waves, unlike the band-limited functions in the statement of Theorem \ref{thm:main meas conv},
the limit direction distribution
measure $\mu_{\tau;V}$ does depend on $V$, and we are not aware whether it is possible to explicate the dependency of $\mu_{\tau;V}$ on
$V$; this is a by-product
of the fact that the spectral measure $\tau$ is not invariant w.r.t. rotations. Moreover, the support might fail to attain the whole of $2\cdot\Z_{>0}$,
depending on $\tau$ (and possibly $V$). However, for the generic case \eqref{eq:rhon equidistr},
the conclusions of Theorem \ref{thm:main meas conv} hold true in their full strength with $\mu_{d\theta/2\pi}=\mu_{1}$ same
as in \eqref{eq:main meas conv}, and, in particular,
$\mu_{d\theta/2\pi}=\mu_{d\theta/2\pi;V}$ does not depend on $V$, and its support equals to precisely $2\Z_{>0}$.

\subsection*{Acknowledgements}

We warmly thank Ze\'{e}v Rudnick for suggesting this collaboration, his interest in our work and his support,
and to Manjunath Krishnapur and Lakshmi Priya for a very enlightening discussion during the conference
``Random Waves in Oxford" that took place in June $2018$. Thanks also to Dmitry Beliaev, Matthew de Courcy-Ireland, Gabriel Rivi\`ere, Nguyen Viet Dang, and Damien Gayet for various discussions on their own work which themselves gave us insights.  A part of the presented research was
conducted during both authors' visit at the Banff International Research Station (BIRS) in July $2019$,
and we are grateful to them for providing us with a unique opportunity to use the outstanding facilities of BIRS as part of the
programme ``Research in Pairs". SE was in residence at McGill University as a Visiting Professor for a majority of the writing of this article and warmly thanks the Department of Mathematics for their hospitality. However, the discussions that lead to the writing of this article began while SE was Lecturer in Analysis at Cardiff University; he is grateful for the opportunity to work in a such stimulating environment. The research leading to these results has received funding from the European Research
Council under the European Union's Seventh Framework Programme (FP7/2007-2013), ERC grant agreement n$^{\text{o}}$ 335141 (I.W.).

\section{Outline of the proofs and discussion}

\label{sec:proofs discussions}

\subsection{Outline of the proofs}

\label{sec:proofs outline}

It is known
that ~\cite{Lax,Horm,CH1,CH2, S88, SV96}, under the assumptions above, the covariance functions $K_{T}(\cdot, \cdot)$ as in \eqref{eq:KT covar func}
scales by $T$ around every point of $\M$, in the following sense.
For $x\in \M$, and $u,v\in \R^{2}$ so that both $\frac{\|u\|}{T},\frac{\|v\|}{T}$ are less than the injectivity
radius of $x$ define the {\em scaled} covariance function using the identification $I_{x}:\R^{2}\rightarrow T_{x}\M$:
\begin{equation*}
K_{T;x}(u,v):= K_{T}(\exp_{x}(I_{x}(u/T)),\exp_{x}(I_{x}(v/T)))
\end{equation*}
on $u,v\in\R^{2}$, corresponding to the {\em scaled} random fields
\begin{equation}
\label{eq:fxT scaled def}
f_{x,T}(u)=f_{\alpha;x,T}(u)=f_{T}\left(\exp_{x}(I_{x}(u/T))\right),
\end{equation}
$u\in\R$ ($\|u\|$ smaller than the injectivity radius of $x$).

Then, for all $u,v\in \R^{2}$ {\em fixed},
\begin{equation}
\label{eq:KTx(u,v)->B(|u-v|)}
K_{T;x}(u,v)\rightarrow B(\|u-v\|),
\end{equation}
locally uniformly, where
\begin{equation}
\label{eq:Bu Fourier annulus}
B(u):= \frac{1}{|A_{\alpha}|}\int\limits_{A_{\alpha}}e^{-2\pi i \langle u,v\rangle} dv
\end{equation}
is the Fourier transform of the characteristic function of the annulus $$A_{\alpha}=\{v\in\R^{2}:\: \alpha<\|v\|<1\}$$
(the unit circle $\Sc^{1}\subseteq\R^{2}$ for $\alpha=1$),
and $|A_{\alpha}|$ is the volume of $A_{\alpha}$ (resp. the length $\pi$ of $\Sc^{1}$ for $\alpha=1$).
Explicitly, \eqref{eq:KTx(u,v)->B(|u-v|)} means that for every $R>0$,
\begin{equation}
\label{eq:covar func band lim der conv}
\sup\limits_{\|u\|,\|v\|<R} \left| K_{T;x}(u,v)- B_{\alpha}(\|u-v\|)\right|\rightarrow 0,
\end{equation}
and the same holds for all derivatives of $K$, where the rate of convergence in \eqref{eq:covar func band lim der conv}
depends on the order of the derivative only.

The rotation invariant function $B_{\alpha}(\cdot)$ defines a stationary isotropic random field
$\gfr_{\alpha}:\R^{2}\rightarrow\R$, i.e. the covariance
function of $\gfr_{\alpha}(\cdot)$ is $$\E[\gfr_{\alpha}(x)\cdot \gfr_{\alpha}(y)]=B_{\alpha}(\|x-y\|).$$
By \eqref{eq:Bu Fourier annulus}, the spectral measure of $\gfr_{\alpha}$ is the volume measure of $A_{\alpha}$ for $\alpha<1$
(resp. arc length of $\Sc^{1}$ for $\alpha=1$).
Intuitively, \eqref{eq:covar func band lim der conv} means that there exists a {\em coupling} of the relevant fields, so that
the random fields \eqref{eq:fxT scaled def} converge, in a sense to be made precise below, to $\gfr_{\alpha}$.
Note that the random field $\gfr_{1}$ coincides with Berry's
Random Waves Model in \S\ref{sec:spher harm RMW}.

\vspace{2mm}

We follow the general strategy of Nazarov-Sodin ~\cite{So12,NS15} by first establishing the analogous
results for the limit random field $\gfr_{\alpha}$ (while separating the individual $k$) on $\R^{2}$,
then prove that these extend to $\M$, locally around every point $x\in \M$ (restricted to geodesic balls of
radius commensurable with $1/T$), and finally glue all the local results on $\M$ into a global one.
Counting tangencies poses a number of marked significant challenges as compared to the original setting
of Nazarov-Sodin, and also ~\cite{SW18}.

To pass from the Euclidean setting into the Riemannian one, a high probability ``stable" event is created
(``few" low lying critical points),
so that, if it occurs, ``most" of the nodal components cannot disappear or merge while perturbing the sample function,
and only few new ones can appear. It was noticed in ~\cite{SW18} that, on the stable event, not only the number of nodal components is (almost) preserved, but also their topologies. However, the number of $V$-tangencies of a component is {\em not} a topological
property, and so can vary upon an arbitrarily small perturbation of the sample function. We then have to redefine the stable
event to account for possible ``non-transversality" of the tangencies by translating our problem into one surrounding the quantitative transversal intersection between curves. We also have to use a somewhat
different analysis for proving that the new stable event is of high probability.

The reason why the support of $\mu_{\alpha}$ does not contain odd integers nor $0$ is that, in the Euclidean setup, if a direction $\zeta$ is fixed, for a closed simple curve to have $0$ or odd number of $\zeta$-tangencies would force some of the tangencies to be non-transversal, occurring with
probability $0$. Since the Riemannian setup will inherit the direction distribution measures from the Euclidean one, it will also induce the analogous results for the band-limited functions (though having a few nodal components with $0$ or odd number of $V$-tangencies is not dismissed a.s.). The proof for the support being exactly $2 \Z_+$ follows essentially from a $C^2$ analogue of the barrier method as utilized in \cite{NS15, SW18, CanSar18} in combination with a necessary $C^2$-closeness estimate for transversal intersections established in \S\ref{s:local_results}.

Another new challenge we encountered is that no analysis of the type explained above is possible around the
(finitely many) points $x\in \M$ where $V(x)=0$.
To deal with this situation we will excise small radius-$\rho$ balls around these problematic points, and bound
the contribution of their neighbourhoods to our counts, taking $\rho\rightarrow 0$ at the very end. Fortunately,
it is possible to tune all the parameters encountered and use their relations in our favour. Finally, the local computations with Kac-Rice are useful in order to establish that
the limit measure $\mu_{\alpha}$ is probability, as their means \eqref{eq:tot numb tang=sum mean} (or, rather, their Euclidean counterparts in
\eqref{eq:sum k*nod(k)<=Ncc} below) stay bounded.

\subsection{Discussion: further questions}

\subsubsection{Deterministic bounds}

Yau's conjecture ~\cite{Yau1,Yau2} states that if a function $\phi_{j}:\M\rightarrow\R$ satisfies \eqref{eq:Helmholts}
on a $d$-manifold $\M$,
then its nodal volume, i.e. the hypersurface volume of the nodal set $\phi_{j}^{-1}(0)$ is commensurable with $t_{j}$, i.e.
\begin{equation*}
t_{j}\ll_{\M}\vol(\phi_{j}^{-1}(0)) \ll_{\M} t_{j}.
\end{equation*}
Yau's conjecture was settled by Br\"{u}ning ~\cite{Br}, Br\"{u}ning-Gromes ~\cite{BrGr}, and Donnelly-Fefferman ~\cite{DF}
for real analytic manifolds (lower and upper bounds), and, more recently,
a breakthrough progress was made towards the general smooth case ~\cite{Log Mal,Log Lower, Log Upper}. It would be desirable to
establish the analogous deterministic bounds for the direction distribution of the type
\begin{equation*}
t_{j}^{2}\ll_{\M} \Ncc_{V}(\phi_{j}) \ll_{\M} t_{j}^{2}
\end{equation*}
for a $2$-manifold $\M$, and $V\in \mathcal{V}(\M)$, in some ``generic" scenario. Other than
the aforementioned bounds for the toral Laplace eigenfunctions \eqref{eq:fn ARW def}
due to Rudnick-Wigman ~\cite{RudWig2018}, appealing to B\'ezout's Theorem,
and the case of spherical harmonics
where one may exploit the fact that these are restrictions of polynomials and can therefore also employ B\'ezout, this problem is entirely open, at least to our knowledge.

\subsubsection{Uniform convergence w.r.t. $V$}

The convergence \eqref{eq:main meas conv} of the direction distribution measure to the limit measure is a priori dependent on $V$.
However, one could explicate the proofs to control the corresponding constants uniformly on compact sets of $\mathcal{V}(\M)$ with respect to some reasonable topology that takes into account $V$ and finitely many of its derivatives. One would be interested in characterising the geometry of such vector fields.

\subsubsection{Nonconstant vector fields on $\R^{2}$}

Our proofs below that $\mu_{\alpha}$ is independent of $V\in\mathcal{V}(\M)$ are based
on the fact that if $F:\R^{2}\rightarrow\R$ is a stationary isotropic Gaussian random field, then
the law of $\pazocal{N}_{\xi}(F|_{B(R)})$ (and $\mu_{F|_{B(R)}}(\xi)$), corresponding to the restriction of $F$ on a (large)
ball $B(R)$, is independent of the direction $\xi$. Here $\xi$ should be
thought as the direction corresponding to $V(x)$, $x\in\M$ is a fixed point on a Riemannian manifold,
and $F$ should be thought of a scaled version of a random field defined on $\M$, in the vicinity of $x$.
This naturally raises the question whether the same is true for $W\in\mathcal{V}(\R^{2})$ in place of $\xi$,
i.e. the distribution of $\pazocal{N}_{W}(F|_{B(R)})$ is independent of $W$, for $W$ that is no longer assumed to be constant,
at least, asymptotically as $R\rightarrow\infty$.

\subsection{Counting version of Theorem \ref{thm:main meas conv}}

Unlike in Theorem \ref{thm:main meas conv}, the following Theorem \ref{t:main_thm_less_equal_1} will separate the counts for different $k$.
A short argument will then allow to deduce Theorem \ref{thm:main meas conv} from Theorem \ref{t:main_thm_less_equal_1}
(see \S\ref{sec:proof main thm meas}).

\begin{theo} \label{t:main_thm_less_equal_1}
Let $V \in \mathcal{V}(\M)$ and $\alpha \in [0,1]$. Let $f_{\alpha, T}$ be the random $\alpha$-band limited functions \eqref{eq:fT band lim alpha<1} (or \eqref{eq:fT band lim alpha=1}) of degree $T$,
and $\nod_{V}(f_T,k)$ be the number of components whose number of $V$-tangencies is precisely $k$, $k\ge 0$. Then the following hold:
\begin{enumerate}
\item There exists $C_{\alpha, k} \geq 0$ such that
\begin{equation*}
\E \left[ \left| \frac{\nod_{V}(f_{\alpha, T},k)}{\nod(f_{\alpha, T})} - C_{\alpha, k} \right| \right]
\xrightarrow[T \rightarrow \infty]{} 0 .
\end{equation*}

\item
Furthermore, $C_{\alpha, k} > 0$ for $k \in 2 \Z_{> 0}$ and $C_{\alpha, k} = 0$ otherwise.
\end{enumerate}
\end{theo}

\section{Direction distribution for Euclidean random fields}

\subsection{Euclidean random fields: scale invariant model}

Throughout this article we will be interested in centered Gaussian random fields $F: \mathbb{R}^2 \rightarrow \R$.
By Kolmogorov's Theorem we know that the law of a centered Gaussian field is determined by its covariance kernel
\begin{equation*}
K(x,y) = \E \left[ F(x) \cdot F(y) \right].
\end{equation*}
In particular, we concern ourselves with those fields which are isotropic, that is,
\begin{equation*}
K(x,y) = K(|x-y|),
\end{equation*}
implying invariance under all rotations and translations. Furthermore, we assume for convenience that $K(0) = 1$.

It is known that such covariance kernels $K$ can be expressed as the Fourier transform of a measure $\rho$, called the \textit{spectral measure} of $F$. In many cases, it is more convenient to describe the field $F$ in terms of $\rho$ instead of the covariance kernel.  This alternative way of writing $F$ is through the Hilbert space $\mathcal{H}(\rho) = \mathcal{F} \left( L^2_{sym}(\rho) \right)$, that is the space of functions $h$ which are $L^2$-orthonormal with respect to the measure $\rho$ and satisfy the symmetry rule $\overline{h}(-Y) = h(Y)$.  In particular, if $\{\phi_k\}_k$ is an orthonormal basis for $\mathcal{H}(\rho)$, we can formally write
\begin{equation*}
F = \sum \zeta_k \phi_k
\end{equation*}
where $\zeta_k$  are i.i.d Gaussian random variables. This series diverges a.s. in $\mathcal{H}(\rho)$ but under suitable assumptions on $\rho$, converges a.s. pointwise or in some other sense to a well-defined function, the nature of the convergence depending on the properties of $\rho$.

For our purposes, the series of interest converges locally uniformly in $C^k$ for all $k \in \N$.
As a concrete example, the spectral measure $\rho$ corresponding to Berry's RWM mentioned in \S\ref{sec:spher harm RMW} as the scaling limit
of $T_{l}$ is the 1-dimensional arc-length measure $ds$ on the circle $S^1$, and we can explicitly compute an orthonormal basis for $\mathcal{H}(ds)$, so we can express it as
\begin{equation*}
\sum_{n \in \Z} c_n J_{|n|}(r) e^{in \theta}
\end{equation*}
where the $c_k$ are i.i.d. standard complex Gaussians.

\subsection{Kac-Rice method}

The Kac-Rice formula is a standard tool or a meta-theorem for expressing all the moments of {\em local} quantities, an example being the nodal volume, and the number of critical points of a random field $F:\Dc\rightarrow\R$ where
$\Dc$ might be a compact subdomain of the Euclidean space or a subdomain of $\M$. First, for our purposes
we need the following two upper bounds on the nodal count, which are direct conclusions from the application of Kac-Rice formula on appropriately defined critical points (either of a function $F$ or its restriction on the boundary of a disc or geodesic ball in $\M$)
appearing in ~\cite{BW17}, which were themselves borrowed from ~\cite{So12}. These results are valid for all random fields of our interest,
lying in the $2$ dimensional case only.

\begin{lemma}[Cf. {~\cite[Lemma 3]{BW17}}, {~\cite[Corollary 2.3]{So12}}]
\label{lem:Kac Rice Euclid}

Let $R>0$ and $F:\R^{2}\rightarrow\R$ be a stationary Gaussian random field, so that $F(\cdot)$ is a.s. $C^{2}$-smooth and so that for every $x\in \R^{2}$
the vector $\nabla F(x)$ is non-degenerate Gaussian.

\begin{enumerate}

\item Let $\nod(F,R)$ be the number of nodal components of $F$ lying entirely in $B(R)$. Then
\begin{equation*}
\E[\nod(F,R)] = \Oc(R^{2}),
\end{equation*}
with the constant involved in the $\Oc$-notation depending on the law of $F$ only.

\item Let $\widetilde{\nod}(F,R)$ be the number of nodal components of $F$ intersecting the circle $\partial B(R)$. Then
\begin{equation*}
\E[\widetilde{\nod}(F,R)] = \Oc(R),
\end{equation*}
with the constant involved in the $\Oc$-notation depending on the law of $F$ only.

\end{enumerate}

\end{lemma}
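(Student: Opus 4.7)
The plan is to prove both bounds by the Kac-Rice formula after reducing each count to a quantity it directly controls: part (1) to the number of critical points of $F$ in $B(R)$, part (2) to the number of zeros of the $1$-dimensional restriction $F|_{\partial B(R)}$. In both cases stationarity makes the Kac-Rice integrand constant (respectively uniformly bounded in the arc-length parameter), so the expected counts come out as the volume of $B(R)$ (resp.\ length of $\partial B(R)$) times an $F$-dependent constant.

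For part (1), I would first establish the almost-sure inequality
\begin{equation*}
\nod(F,R) \leq \#\{x \in B(R):\, \nabla F(x) = 0\}.
\end{equation*}
Standard Bulinskaya-type arguments, applied to the jointly non-degenerate Gaussian vectors $(F(x),\nabla F(x))$ and to $\nabla F(x)$ together with its Jacobian, yield a probability-one event on which $F$ is Morse on $\overline{B(R)}$ and $F^{-1}(0)$ is a smooth $1$-submanifold with no critical zeros. On that event every component $\gamma \subset B(R)$ bounds a Jordan region $J_\gamma \subset B(R)$; let $D_\gamma \subset J_\gamma$ be the connected component of $\{F \neq 0\} \cap J_\gamma$ whose closure contains $\gamma$ (unique, since $F$ has a definite sign on one side of $\gamma$). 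Then $D_\gamma$ is open, bounded, $F$ is of constant nonzero sign on $D_\gamma$, and $F \equiv 0$ on $\partial D_\gamma$, so $F$ attains an interior extremum in $D_\gamma$, producing a critical point $p_\gamma \in D_\gamma \subset B(R)$. A strictly nested pair $J_{\gamma_1} \subsetneq J_{\gamma_2}$ forces $D_{\gamma_1}$ to lie inside an immediate descendant of $\gamma_2$, which is excluded from $D_{\gamma_2}$; and disjoint Jordan regions produce disjoint $D_\gamma$. Hence the regions $\{D_\gamma\}$ are pairwise disjoint and $\gamma \mapsto p_\gamma$ is injective, yielding the claimed inequality. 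Applying Kac-Rice to the stationary Gaussian vector field $\nabla F$ then gives
\begin{equation*}
\E[\#\{x \in B(R):\, \nabla F(x) = 0\}] = \rho_c \cdot |B(R)| = \Oc(R^2),
\end{equation*}
where $\rho_c = \E[|\det \mathrm{Hess}\, F(0)| \mid \nabla F(0) = 0] \cdot p_{\nabla F(0)}(0)$ is constant in $x$ by stationarity and depends only on the law of $F$.

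For part (2), every nodal component intersecting $\partial B(R)$ contains at least one point of $\partial B(R)$ where $F$ vanishes, hence contributes at least one zero of the circular Gaussian process $g(\theta) := F(R\cos\theta, R\sin\theta)$; therefore
\begin{equation*}
\widetilde{\nod}(F,R) \leq \#\{\theta \in [0, 2\pi):\, g(\theta) = 0\}.
\end{equation*}
The process $g$ is a.s.\ $C^2$, its marginal $g(\theta)$ has variance $K(0)$, and its derivative $g'(\theta) = R\langle \nabla F, (-\sin\theta, \cos\theta)\rangle$ has variance $\Oc(R^2)$ uniformly in $\theta$ and is non-degenerate by the hypothesis on $\nabla F$. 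The one-dimensional Kac-Rice formula then gives
\begin{equation*}
\E[\#\{\theta:\, g(\theta) = 0\}] = \int_0^{2\pi} \rho_z(\theta)\, d\theta = \Oc(R),
\end{equation*}
with density $\rho_z(\theta) = \Oc(R)$ uniformly in $\theta$ from the explicit Gaussian computation, the implicit constant depending only on the law of $F$.

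The main technical point is the reduction step in part (1): producing the probability-one event on which $F$ is Morse and its nodal set is a smooth regular $1$-submanifold, and then the nested planar-topology bookkeeping that makes $\gamma \mapsto p_\gamma$ injective. Both are standard consequences of the Bulinskaya lemma and of the Jordan curve theorem for simple closed curves in $B(R)$, but they are the only non-routine parts of the argument; the Kac-Rice integrand computations themselves are immediate given the non-degeneracy and stationarity hypotheses, so once the injection is in place both bounds drop out at once.
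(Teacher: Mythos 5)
Your proposal is correct and follows essentially the same route as the paper, which states this lemma as a direct consequence of the Kac--Rice formula applied to the critical points of $F$ (for the interior count, via the standard injection of nodal components into critical points through the adjacent-from-inside nodal domains) and to the restriction of $F$ to the circle $\partial B(R)$ (for the boundary count); the paper itself gives no proof, simply citing \cite{BW17} and \cite{So12}, and your write-up fleshes out exactly the argument those references use.
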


Next, the following result will be useful for ruling out mass leaking for the direction distribution measures (see the proof of
Theorem \ref{thm:euclid_count} (4) in \S\ref{sec:proof thm euclid no leak} below). It is a direct consequence of ~\cite[Theorem 6.3]{AW}.

\begin{lemma}
\label{lem:Kac-Rice Euclid tangencies}
Let $F:\R^{2}\rightarrow\R$ be a stationary Gaussian random field, so that the random vector $(F(0),\nabla F(0))$
has a non-degenerate $3$-variate Gaussian distribution, $\zeta\in S^{1}$,
and denote $\Ncc_{\zeta}(F,R)$ to be the number of $\zeta$-tangencies of $F$. Then
\begin{equation*}
\E[\Ncc_{\zeta}(F,R)] = C_{0}\cdot \vol(B(R)),
\end{equation*}
for some $C_{0}>0$ depending on the law of $F$ only.
\end{lemma}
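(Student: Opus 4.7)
The plan is to obtain the lemma as a direct application of the Kac-Rice formula for the zero count of an $\R^2$-valued stationary Gaussian field. First, I would introduce the auxiliary Gaussian field $G:\R^2\to\R^2$ given by $G(x) = (F(x),\,\langle \zeta, \nabla F(x)\rangle)$. Its zero set in $B(R)$ coincides exactly with the set of $\zeta$-tangencies of $F$, so that $\Ncc_\zeta(F,R) = \#\{x\in B(R) : G(x)=0\}$.

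Next I would verify the hypotheses needed to invoke the Kac-Rice formula \cite[Theorem 6.3]{AW}. The field $G$ is a.s.\ $C^1$, inherited from the a.s.\ $C^2$-smoothness of $F$; the vector $G(x)$ is a centred non-degenerate Gaussian for every $x$, since $(F(0), \langle \zeta, \nabla F(0)\rangle)$ is obtained from the non-degenerate Gaussian $(F(0), \nabla F(0))$ by the surjective linear map $(a,v)\mapsto (a,\langle \zeta,v\rangle)$, and stationarity transports this to all $x$. Kac-Rice then gives
\begin{equation*}
\E[\Ncc_\zeta(F,R)] = \int_{B(R)} \E\bigl[|\det DG(x)| \,\big|\, G(x)=0\bigr]\, p_{G(x)}(0)\, dx,
\end{equation*}
and by stationarity of $F$ the integrand is independent of $x$, yielding $\E[\Ncc_\zeta(F,R)] = C_0\cdot \vol(B(R))$ for some $C_0\ge 0$ depending only on the law of $F$.

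Finally I would establish $C_0>0$. The Gaussian density $p_{G(0)}(0)$ is strictly positive by non-degeneracy. For the conditional expectation, I compute $DG(x)$: its rows are $\nabla F(x)$ and $(\nabla^2 F(x))\zeta$; on the event $\{G(x)=0\}$ one has $\nabla F(x) = c(x)\zeta^\perp$ with $c(x) = \langle \zeta^\perp,\nabla F(x)\rangle$, leading to $\det DG(x) = -c(x)\cdot \langle\zeta, (\nabla^2 F(x))\zeta\rangle$. Conditional on $G(0)=0$, the scalar $c(0)$ has a non-degenerate Gaussian law and is therefore a.s.\ non-zero, while the directional second derivative $\langle \zeta, (\nabla^2 F(0))\zeta\rangle$ is a.s.\ non-zero under the standing non-degeneracy of the joint law of $(F(0), \nabla F(0), \nabla^2 F(0))$; hence $|\det DG(0)|>0$ a.s.\ on $\{G(0)=0\}$ and $C_0>0$. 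The only real technical obstacle is checking the mild regularity conditions on the conditional law of $DG(0)$ given $G(0)=0$ required by the Kac-Rice machinery, which are exactly those packaged into \cite[Theorem 6.3]{AW}.
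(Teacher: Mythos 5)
Your argument is correct and is exactly the route the paper takes: the paper offers no proof beyond the citation of \cite[Theorem 6.3]{AW}, and your auxiliary field $G=(F,\langle\zeta,\nabla F\rangle)$ is precisely the two-dimensional stationary field to which that theorem is meant to be applied, with stationarity turning the Kac--Rice integral into a constant times $\vol(B(R))$.

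One caveat on the positivity of $C_{0}$: your argument invokes non-degeneracy of the $2$-jet $(F(0),\nabla F(0),\nabla^{2}F(0))$, which is not among the lemma's hypotheses (only the $1$-jet is assumed non-degenerate). Without such an extra assumption the conditional law of $\langle\zeta,(\nabla^{2}F(0))\zeta\rangle$ given $G(0)=0$ can collapse to the point mass at $0$ --- e.g.\ if the spectral measure is supported on the two lines $\{\langle\zeta,\lambda\rangle=\pm a\}$, then $\partial_{\zeta}^{2}F=-a^{2}F$ identically, every $\zeta$-tangency is degenerate, and in fact $C_{0}=0$ --- so strict positivity genuinely needs more than what the lemma states. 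This does not affect the paper, since only the upper bound $\E[\Ncc_{\zeta}(F,R)]\le C_{0}\vol(B(R))$ is used (in \S\ref{sec:proof thm euclid no leak}), and for the specific scaling limits $\gfr_{\alpha}$ the relevant $2$-jet non-degeneracy does hold; but you should either add that hypothesis explicitly or weaken the conclusion to $C_{0}\ge 0$.
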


Finally, the following lemma yields an upper bound for the number of nodal components lying in a small geodesic ball
for the band limited functions only.

\begin{lemma}[{~\cite[Lemma 2]{So12}}]
\label{lem:Kac Rice manifold}
Let $\alpha \in [0,1]$ and $f_{T}=f_{\alpha;T}$ be the random band-limited function \eqref{eq:fT band lim alpha<1}
(or \eqref{eq:fT band lim alpha=1}).
For $x\in \M$ and $r>0$ sufficiently small, denote $\nod (f_{x,T},r)$ to be the number of nodal components of $f_{T}$ lying inside
the geodesic ball $B(x,r)\subseteq\M$ centred at $x$ with radius $r$. Then their expected number satisfies the
following estimate:
\begin{equation*}
\E[\nod (f_{x,T},r)] = \Oc(T^{2}\cdot r^{2}),
\end{equation*}
with constant involved in the $\Oc$-notation depending on $\M$ and $\alpha$ only.
\end{lemma}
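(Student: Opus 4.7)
The plan is to bound the number of nodal components by the number of critical points of $f_T$ in $B(x,r)$, and then estimate the latter by the Kac-Rice formula, using the scaling \eqref{eq:covar func band lim der conv} to obtain uniform control.

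First, I would carry out a topological reduction. Thanks to the non-degeneracy of the joint Gaussian law of $(f_T(y),\nabla f_T(y),\nabla^2 f_T(y))$ at every $y\in\M$ (itself a consequence of \eqref{eq:covar func band lim der conv}), the field $f_T$ is a.s.\ Morse, so every nodal component $\gamma$ contained in $B(x,r)$ is a simple smooth closed curve. For $r$ smaller than the injectivity radius at $x$, the ball $B(x,r)$ is a topological disc, so each such $\gamma$ bounds a disc $D_\gamma\subseteq B(x,r)$. If $N:=\nod(f_{x,T},r)$, the standard fact that $N$ disjoint simple closed curves subdivide a disc into $N+1$ regions gives $N$ ``inner'' regions whose boundary consists solely of $\gamma_i$'s. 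Each such inner region, even if further subdivided by nodal arcs that also exit $B(x,r)$, contains a nodal domain of $f_T$ fully lying in $B(x,r)$; since $f_T$ vanishes on that domain's boundary but not identically, it attains an interior extremum there, producing a critical point of $f_T$. As the $N$ inner regions are pairwise disjoint, the resulting critical points are distinct, yielding
\begin{equation*}
\nod(f_{x,T},r) \;\le\; \#\operatorname{Crit}(f_T|_{B(x,r)}).
\end{equation*}

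Second, I would apply the Kac-Rice formula to the critical point count,
\begin{equation*}
\E\bigl[\#\operatorname{Crit}(f_T|_{B(x,r)})\bigr]
= \int_{B(x,r)} \E\bigl[\,|\det \nabla^2 f_T(y)|\,\bigm|\,\nabla f_T(y)=0\bigr]\cdot \phi_{\nabla f_T(y)}(0)\, dy,
\end{equation*}
where $\phi_{\nabla f_T(y)}$ is the Gaussian density of the gradient at $y$ and $dy$ is the Riemannian volume, and then use \eqref{eq:covar func band lim der conv} to bound the integrand uniformly. Differentiating \eqref{eq:covar func band lim der conv} twice gives $\operatorname{Cov}(\nabla f_T(y)) = T^2 A_\alpha(y) + o(T^2)$ with $A_\alpha(y)$ positive-definite and bounded above and below in terms of $\M$ and $\alpha$ only, so $\phi_{\nabla f_T(y)}(0) = O(T^{-2})$. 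Similarly, the entries of $\operatorname{Cov}(\nabla^2 f_T(y))$ are of order $T^4$, whence $\E[|\det \nabla^2 f_T(y)|\mid \nabla f_T(y)=0] = O(T^4)$. Multiplying these gives an integrand of size $O(T^2)$ uniformly in $y\in\M$; integrating over $B(x,r)$ (of area $O(r^2)$) yields the claimed bound $O(T^2 r^2)$.

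The main obstacle is the uniform control in the last step: the implicit constants in \eqref{eq:covar func band lim der conv} and its derivatives must be controlled independently of $x\in\M$, and the limit covariance $A_\alpha$ must be uniformly non-degenerate. This is ensured by the compactness of $\M$ together with the uniform Hörmander/local Weyl-type asymptotics for the spectral projector $K_T$ and its derivatives (valid on all of $\M$ at once), but requires careful bookkeeping; once it is in place, the topological reduction and the Kac-Rice application are routine.
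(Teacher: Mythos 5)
Your proof is correct and follows essentially the same route as the source the paper cites for this lemma (the paper gives no proof of its own, only the reference to \cite[Lemma 2]{So12}): bounding the components contained in $B(x,r)$ by critical points via an extremum in each innermost nodal domain, then applying Kac--Rice to $\nabla f_T$ with the covariance scaling $\operatorname{Cov}(\nabla f_T)\asymp T^2$, $\operatorname{Cov}(\nabla^2 f_T)\asymp T^4$ to get an integrand of size $O(T^2)$ uniformly over $\M$. This is the standard argument and your uniformity caveat (compactness of $\M$ plus the uniform local Weyl asymptotics for $K_T$ and its derivatives) is exactly the point that makes it go through.
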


\subsection{Direction distribution for Euclidean random fields}

Let $F:\R^{2}\rightarrow\R$ be a smooth Gaussian random field with spectral measure $\rho$,
$\tilde{V} \in \mathcal{V}(\R^2)$ be non-vanishing, and $k \in \Z_{\geq 0}$. We denote by
\begin{equation*}
\nod_{\tilde{V}}(F,k,R)
\end{equation*}
(resp. $\nod(F,R)$) the number of connected components of $F^{-1}(0)$ (which are all smooth thanks to Bulinskaya's Lemma \cite[Proposition 6.11]{AW}) that are both completely contained in $B(R)$ and whose number of $\tilde{V}$-tangencies is precisely $k$
(resp. the total number of connected components of $F^{-1}$ contained in $B(R)$).

As we would like to study the distribution of $\nod_{\tilde{V}}(F, \cdot, \cdot)$, it is essential to show that this function is in fact a random variable (i.e. that it is measurable on the given sample space of Gaussian fields $F$). A detailed verification of this fact for the related random variable $N(F, R, \mathcal{T})$, that is the number of connected components contained in the ball $B(R)$ with topological type $\mathcal{T}$, was carried out in the Appendix of \cite{SW18} and is sufficiently robust to prove measurability in our case. We leave the details to the interested reader. We now state some axioms on $F$ (rather, its spectral measure $\rho$) so that to be able to formulate the main result (Theorem \ref{thm:euclid_count} immediately below)
on Euclidean random field.

\begin{definition}[Axioms on $\rho$]
\label{def:axiom spec meas}
For a given Gaussian stationary field $F$, we will sometimes impose the following restrictions on the corresponding spectral measure $\rho=\rho_{F}$:
\begin{itemize}
\item $(\rho 1)$ The measure $\rho$ has no atoms (if and only if the action of the translations is ergodic by Theorem \ref{thm:GFM}).
\item $(\rho 2)$ For some $p > 4$,
\begin{equation*}
\int\limits_{\R^2} |\lambda|^{p} \, d \rho(\lambda) < \infty
\end{equation*}
(this ensures that a.s. $C^2$-smoothness of $F$).
\item $(\rho 3)$ The support of $\rho$ does not lie in a linear hyperplane (this ensures that the Gaussian field, together with its gradient, is not degenerate).

\item $(\rho 4^*)$ The support of $\rho$ has non-empty interior.

\end{itemize}
\end{definition}

Nazarov and Sodin ~\cite{So12,NS15}
proved that if $F$ satisfies the axioms $(\rho 1)-(\rho 3)$, then there exists a constant $c(\rho)=c_{NS}(\rho)\ge 0$
(``Nazarov-Sodin constant "), so that
\begin{equation}
\label{eq:NS const def prop}
\E\left[\left|\frac{\nod(F,R)}{\vol(B(R))} - c(\rho) \right|\right] \rightarrow 0.
\end{equation}
If, further, $(\rho 4^{*})$ is satisfied, then $c(\rho)>0$. In fact, in most of what follows, we will work with the following weaker version
of $(\rho 4^{*})$.

\begin{definition}[Axiom $(\rho 4)$]

$(\rho 4)$ The Nazarov-Sodin constant $c(\rho)$ is positive.

\end{definition}

\begin{theo} \label{thm:euclid_count}
Let $F: \R^n \rightarrow \R$ be a stationary random field whose spectral measure $\rho$ satisfies axioms $(\rho 1) - (\rho 4)$,
$k \ge 0$, $\zeta\in\R^{2}$ be a fixed direction, and recall the Nazarov-Sodin constant $c(\rho)>0$ satisfying the defining
property \eqref{eq:NS const def prop}. Then:
\begin{enumerate}
\item There exists $C_{k, \zeta}=C_{k, \zeta}(\rho) \geq 0$
\begin{equation}
\label{eq:conv L1 Euclid}
\E \left[ \left| \frac{\nod_{\zeta}(F,k,R)}{c(\rho)\cdot \vol (B(R))} - C_{k, \zeta} \right| \right] \rightarrow 0
\end{equation}
as $R \rightarrow \infty$. If $k=0$ or $k$ is odd, then $C_{k,\zeta}=0$.

\item Assuming further that $F$ is isotropic, we have that $C_k=C_{k, \zeta}$ is independent of $\zeta$.

\item If $k\in 2\Z_{>0}$, and either $F$ satisfies the axiom $(\rho 4^*)$ (without assuming that $F$ is isotropic),
or $\rho = \sigma_{S^1}$ (i.e. $F$ is Berry's monochromatic isotropic random waves),
the normalized arc-length measure on $S^1$, then $C_{k,\zeta} > 0$.

\item We have
\begin{equation*}
\sum\limits_{k=0}^{\infty}C_{k,\zeta} = 1.
\end{equation*}

\end{enumerate}
\end{theo}

\section{Various facts from plane geometry}

\subsection{Vector fields and exponential maps}

\begin{defn} \label{d:blown_up_V}
Let $F  \in C^{\infty}(T_x\M)$ and $T>0$. For $V \in \mathcal{V}(\M)$ and $Y \in T_x\M$, we define the \textit{blown-up (at scale $T$) vector field} $\tilde{V}_{x,T} \in \mathcal{V}(T_x\M)$ at $x \in \M$, where $exp_x(0) = x$ and $exp_x(Y)=y$, via the local coordinate formula
\begin{equation}
\label{eq:tild(V) def}
(\tilde{V}_{x,T} F) =
 a_1 \left( \exp_x \left( \frac{Y_1}{T}, \frac{Y_2}{T} \right) \right) \cdot \frac{\partial F}{\partial Y_1}\left( Y  \right)
+ a_2 \left( \exp_x \left( \frac{Y_1}{T}, \frac{Y_2}{T} \right) \right) \cdot \frac{\partial F}{\partial Y_2} \left( Y \right).
\end{equation}
\end{defn}

We will shortly consider the random fields
$F(Y_1, Y_2) = f_{\alpha, T}(\exp_x(\frac{Y}{T}))$ as in \eqref{eq:fxT scaled def}. Notice that we are pushing forward the vector field $V$, extracting the ``top order in $T^{-1}$" differential operator, and then applying that to $F$. We {\em are not} pushing forward $V$ by $M_{1/T} \circ (\exp_x)^{-1}_*$, where $M_{1/T}$ is simply multiplication in the fibers of $T_xM$ by $1/T$. The reason for this particular definition is so that the coefficients of $V$ are localizing at the same scale as $f_{x, T}$.
Thus in the regime $|Y| \leq R$ with $R/T = o(1)$, for a large but fixed $R$ with $T \rightarrow \infty$, we find that the coefficients $a_1,a_2$ become constants up to first order in $T^{-1}$. More specifically, we have that, for instance, $a_1 \left( \exp_x(\frac{Y_1}{T}, \frac{Y_2}{T}) \right) = a_1(x) + \mathcal{O} \left( \frac{|Y|}{T} \right)$ with $\frac{|Y|}{T} = o(1)$ as seen in upcoming sections discussing limiting regimes.

From here onward, we reserve the notation of $\tilde{V}$ for vector fields on $\R^2$. Now, in order to show that our main result Theorem \ref{t:main_thm_less_equal_1} is not vacuous, we give the following result on the existence of vector fields with finitely many zeroes.

\begin{lem} \label{l:vfs_finite_zeroes}
Given a smooth and compact surface $(\M,g)$, there exists smooth vector fields $V$ that have finitely many zeroes. That is, our class $\mathcal{V}(\M) \neq \emptyset$ for compact Riemannian surfaces $\M$.
\end{lem}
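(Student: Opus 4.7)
The plan is to construct the required vector field as the Riemannian gradient of a Morse function on $\M$, appealing to classical Morse theory to guarantee the construction is non-vacuous.

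First, I would invoke the standard fact (see e.g.\ Milnor's \emph{Morse Theory} or Hirsch's \emph{Differential Topology}) that every smooth compact manifold admits a Morse function $f : \M \to \R$; recall that this means $f \in C^{\infty}(\M)$ with the property that at every critical point $p$ (i.e.\ $df(p)=0$) the Hessian is non-degenerate. This is usually proved by embedding $\M$ into some $\R^N$ and showing that, for almost every $q \in \R^N$, the squared-distance function $x \mapsto \|x-q\|^2$ restricts to a Morse function on $\M$, so one may simply quote it.

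Next, I would set $V := \nabla_g f$, the Riemannian gradient of $f$ with respect to the metric $g$. Since the metric-dual map is a bundle isomorphism $T^*\M \to T\M$, the zeroes of $V$ coincide precisely with the critical points of $f$. Non-degeneracy of the Hessian at a Morse critical point forces each such point to be isolated (this is a direct consequence of the inverse function theorem applied to $df$, viewed as a section of $T^*\M$, in local coordinates). Hence the zero set of $V$ is discrete in $\M$, and since $\M$ is compact, it must be finite. Therefore $V \in \mathcal{V}(\M)$, and $\mathcal{V}(\M) \neq \emptyset$.

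There is essentially no serious obstacle here; the only non-trivial ingredient is the existence of Morse functions, which is a classical and widely cited result. A minor alternative, avoiding the Morse black box, would be to pick any smooth triangulation of $\M$, construct a local vector field on each closed simplex with a single non-degenerate zero in its interior (or none), and glue them together via a partition of unity after a small perturbation to ensure isolated zeros; however, this route is notably more cumbersome and the Morse construction is by far the cleanest.
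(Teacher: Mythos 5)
Your proposal is correct and follows essentially the same route as the paper: take a Morse function (whose existence is guaranteed via an embedding into $\R^N$), pass to its Riemannian gradient $\nabla_g f$, note that non-degeneracy of the Hessian makes each critical point isolated, and conclude finiteness from compactness. No gaps.
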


This result holds in $d\ge 2$ dimensions, but we state it for surfaces given our context.

\begin{proof}
Consider any Morse function $f$ on $\M$; we know there exists such a function, for example one being the height function on $\M$ after applying an appropriate embedding into $\R^N$. The metric $g$ allows us to associate the differential $df$ to its gradient field $\nabla_g f$, each of whose zeroes is isolated by the non-degenerate critical point condition. Compactness of $\M$ yields that the number of such zeroes must be finite.
\end{proof}

\subsection{Global plane geometry}

In this section we begin our study of curves in $\R^2$ that have a fixed number of tangencies to a given vector field $\tilde{V}$ and its relation to the study of transversality between the sets $f^{-1}(0)$ and $(Vf)^{-1}(0)$. These elementary results, although simple to state and are intuitively clear, prove very useful in the pursuit of Theorem \ref{t:main_thm_less_equal_1}. We begin with a simple but crucial lemma that will play a key role in showing that the limiting constants $C_k$, for $k=0,1$, in Theorem \ref{t:main_thm_less_equal_1} are $0$:

\begin{lem} \label{l:at_least_two_tangencies}
Suppose $\zeta$ is a fixed direction on $\R^2$. Then any closed simple curve in the plane must have at least two tangencies $\zeta$.
\end{lem}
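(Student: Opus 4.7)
The plan is to argue via a projection onto the line perpendicular to $\zeta$. Let $\xi \in S^1$ be a unit vector orthogonal to $\zeta$, and consider the linear functional $\pi:\R^2 \to \R$ defined by $\pi(x) = \langle x, \xi\rangle$. The key observation is that a point $p$ on a smooth curve $\gamma$ is a $\zeta$-tangency precisely when the tangent line to $\gamma$ at $p$ is parallel to $\zeta$, i.e.\ when $d\pi|_{T_p\gamma} = 0$, which is to say that $p$ is a critical point of the restriction $\pi|_\gamma$.

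Given a simple closed curve $\gamma \subset \R^2$, it is diffeomorphic to $S^1$ and hence compact, so the smooth function $\pi|_\gamma$ attains both its maximum and its minimum. Each of these extrema is a critical point of $\pi|_\gamma$, hence a $\zeta$-tangency of $\gamma$ by the observation above. It remains to check that the maximum and minimum are attained at distinct points, which reduces to showing that $\pi|_\gamma$ is non-constant. If $\pi|_\gamma$ were constant, then $\gamma$ would be contained in a single level set of $\pi$, namely an affine line in the direction $\zeta$; but an affine line contains no simple closed curve, a contradiction.

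Therefore $\pi|_\gamma$ has at least two distinct critical points, yielding at least two $\zeta$-tangencies of $\gamma$. There is really no obstacle here: the argument is a one-line application of extreme value considerations on a compact $1$-manifold, and the only thing worth flagging is the trivial non-constancy check that prevents the two extrema from coinciding.
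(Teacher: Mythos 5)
Your proof is correct, but it takes a genuinely different route from the paper. The paper invokes Hopf's Theorem of Turning Tangents (the rotation number of an embedded $S^1$ in $\R^2$ is $\pm 1$), concluding that the unit tangent map is surjective onto $S^1$ and therefore attains both $\zeta$ and $-\zeta$, necessarily at two distinct points. You instead project onto the direction $\xi = \zeta^{\perp}$ and observe that the max and min of the height function $\pi|_\gamma$ on the compact curve are critical points of $\pi|_\gamma$, i.e.\ $\zeta$-tangencies, and are distinct because $\pi|_\gamma$ cannot be constant on a simple closed curve. Your identification of $\zeta$-tangencies with critical points of $\pi|_\gamma$ matches the paper's convention (tangent vector parallel to $\pm\zeta$), so the argument is complete. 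The trade-off: your height-function argument is more elementary and self-contained, needing only the extreme value theorem on a compact $1$-manifold, whereas the paper's degree-theoretic argument yields the stronger fact that \emph{every} direction is realized as a tangent direction of the curve — information the paper does not actually need for this lemma, so nothing is lost by your approach here.
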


Before proving this, we remind ourselves of a simple form of the Gauss-Bonnet Index Theorem, which goes by the name of the Theorem of Turning Tangents in elementary differential geometry, due to Hopf:

\begin{theo}[Theorem of Turning Tangents] \label{t:turn_tang}
The rotation number of an embedded $S^1$ in $\R^2$ (i.e. the oriented number of times the unit tangent vector $\overrightarrow{T}$ to this embedding transverses $S^1$) is equal to $\pm 1$.

In other words, the map induced by $\overrightarrow{T}$ from $S^1$ to $S^1$ must be surjective and in terms of topological language, $\overrightarrow{T}$ induces a map of degree $\pm 1$.
\end{theo}

\begin{proof}[Proof of Lemma \ref{l:at_least_two_tangencies}]
By Theorem \ref{t:turn_tang}, $\overrightarrow{T}$ must be parallel to $\zeta$ and -$\zeta$ at at least two distinct points.  Since $\overrightarrow{T}$ parallel to -$\zeta$ is counted as a $\zeta$-tangency, this proves our claim.
\end{proof}

It is possible to construct closed curves in $\R^2$ with an odd number of tangencies:
all we must do is properly glue together neighbourhoods of the singular parts of a cubic to the neighbourhoods of singular parts of the quadratics. However, we will see that this situation is unstable in an appropriate sense, and therefore will not contribute towards our final result.

\begin{lem} \label{l:even_int}
Consider two $C^2$ curves $\gamma_1, \gamma_2 \subsetneq \R^2$ that arise as components of the zero sets of respectively two different functions, both of which has 0 as a regular value. Assume that $\# ( \gamma_1 \cap \gamma_2)  < \infty$.  If $\gamma_1$ is both simple and closed or $\gamma_2$ is both simple and closed, and each point in $\gamma_1 \cap \gamma_2$ is transversal, then $\# \gamma_1 \cap \gamma_2$ is even.
\end{lem}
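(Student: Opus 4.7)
The plan is to apply the Jordan curve theorem to whichever of the two curves is simple and closed---say $\gamma_1$---and then run a parity argument along $\gamma_2$. By Jordan, $\R^2 \setminus \gamma_1 = U_{\mathrm{in}} \sqcup U_{\mathrm{out}}$ with $U_{\mathrm{in}}$ bounded. Define $\sigma : \gamma_2 \setminus (\gamma_1 \cap \gamma_2) \to \{+1,-1\}$ by $\sigma \equiv +1$ on $U_{\mathrm{in}}$ and $\sigma \equiv -1$ on $U_{\mathrm{out}}$. Since $\gamma_1 \cap \gamma_2$ is finite, the domain of $\sigma$ is a finite disjoint union of open arcs of $\gamma_2$ on each of which $\sigma$ is constant, and the parity of $\#(\gamma_1 \cap \gamma_2)$ is controlled by whether $\sigma$ must return to its starting value upon a traversal of $\gamma_2$.

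First I would classify $\gamma_2$ topologically. As a connected component of $f_2^{-1}(0)$ with $0$ a regular value, $\gamma_2$ is a connected boundaryless $C^2$ one-manifold, hence diffeomorphic either to $S^1$ (when compact) or to $\R$ (when non-compact), and it is closed in $\R^2$ as a connected component of the closed set $f_2^{-1}(0)$. In the non-compact case I would verify that a global parametrization $\phi : \R \to \gamma_2$ is a proper embedding: if $\phi(t_n) \to p \in \gamma_2$ along some sequence $t_n \to \pm \infty$, then local injectivity of $\phi$ near $\phi^{-1}(p)$ together with global injectivity of $\phi$ would be violated. Consequently both ends of $\gamma_2$ leave every compact subset of $\R^2$, and in particular eventually lie in the unbounded component $U_{\mathrm{out}}$.

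Next I would check that $\sigma$ flips at each $p \in \gamma_1 \cap \gamma_2$. Transversality means that the tangent line to $\gamma_2$ at $p$ is distinct from the tangent line to $\gamma_1$; locally $\gamma_1$ is a smooth arc separating a small disc around $p$ into two half-discs lying one in $U_{\mathrm{in}}$ and one in $U_{\mathrm{out}}$, so a short arc of $\gamma_2$ through $p$ enters one and exits the other. From here the parity conclusion is immediate: in the $S^1$-case, traversing $\gamma_2$ once returns $\sigma$ to its initial value, forcing evenly many sign changes; in the $\R$-case the properness above places $\sigma = -1$ at both ends of $\gamma_2$, again forcing evenly many sign changes. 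Either way $\#(\gamma_1 \cap \gamma_2)$ is even. The only subtlety I anticipate is the $\R$-case properness, required to exclude pathologies such as $\gamma_2$ accumulating on itself or on $\gamma_1$, but this is forced by the closedness of $\gamma_2$ in $\R^2$ combined with the global injectivity of its parametrization.
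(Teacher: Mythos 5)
Your proof is correct and follows essentially the same route as the paper's: both rest on the Jordan curve theorem together with the observation that a transversal crossing forces $\gamma_2$ to switch between the inside and the outside of $\gamma_1$, so that the crossings pair up (your sign-change parity count is just the dual phrasing of the paper's pairing of intersection points by arcs of $\gamma_2$ interior to $\Omega_1$). If anything, your write-up is more careful than the paper's, which glosses over the non-compact case $\gamma_2\cong\R$ that you handle explicitly via closedness of the component and properness of its parametrization.
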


\begin{proof}
Without loss of generality, let $\gamma_1$ be simple and closed.  By the Jordan Curve Theorem, $\gamma_1$ encloses a bounded region $\Omega_1$.  By the transversality assumption, every point of intersection is uniquely connected to another intersection  by an arc  of $\gamma_2$ that is completely contained in $\Omega_1$ (for otherwise, this contradicts the regularity assumption or the transversality assumption).  Hence, we can associate two unique points of intersection to each interior arc.  Counting the number of such arcs, we arrive at our conclusion.
\end{proof}

\begin{cor}
\label{cor:tangencies even}
For a fixed direction $\zeta$ on $\R^2$, any simple closed curve in the plane without inflection points that are also $\zeta$-tangent must have an even positive number of $\zeta$-tangencies.
\end{cor}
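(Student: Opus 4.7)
The plan is to reduce the statement to a Morse-theoretic parity argument on $S^1$. Let $\gamma:S^1\to\R^2$ be a smooth parametrization of the simple closed $C^2$ curve, and fix a unit vector $e\in\R^2$ orthogonal to $\zeta$. Define the height function $h:S^1\to\R$ by $h(t):=\langle \gamma(t),e\rangle$, so that $h'(t)=\langle\gamma'(t),e\rangle=0$ if and only if $\gamma'(t)$ is parallel to $\pm\zeta$. The critical points of $h$ are therefore exactly the $\zeta$-tangencies of $\gamma$.

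The next step would be to show that the no-inflection hypothesis is precisely the non-degeneracy condition $h''(t_0)\ne 0$ at each critical point $t_0$. Using an arc-length parametrization in a neighbourhood of $t_0$, the vector $\gamma'(t_0)$ is a unit vector parallel to $\pm\zeta$, and the constant-speed condition forces $\gamma''(t_0)\perp \gamma'(t_0)$, hence parallel to $e$. Consequently $h''(t_0)=\langle\gamma''(t_0),e\rangle=\pm\kappa(t_0)$, where $\kappa(t_0)$ denotes the (signed) curvature of $\gamma$ at $t_0$. The assumption that no $\zeta$-tangent point is an inflection point thus translates into $h''(t_0)\ne 0$ at every critical point, so $h$ is a Morse function on $S^1$.

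Once $h$ is known to be Morse, the parity conclusion is standard. Traversing $S^1$, non-degenerate critical points must alternate between local maxima and local minima, because between two consecutive ones $h'$ has constant sign and hence $h$ is strictly monotone. Returning to the starting point forces the number of maxima to equal the number of minima, so the total number of critical points is even. Combined with Lemma \ref{l:at_least_two_tangencies}, which supplies positivity, this gives the claimed even positive count. The only mild subtlety, essentially bookkeeping, is verifying that the identification $h''=\pm\kappa$ at critical points is independent of the chosen orientation and smooth parametrization; once that is confirmed, the argument reduces to a one-line application of Morse theory on the circle, and I do not anticipate any serious obstacle.
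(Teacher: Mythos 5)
Your proof is correct, but it follows a genuinely different route from the paper's. The paper obtains the evenness from Lemma \ref{l:even_int}: it views the $\zeta$-tangencies of a nodal component $\gamma\subseteq g^{-1}(0)$ as the points of intersection of $\gamma$ with the zero set of the directional derivative $\zeta\cdot\nabla g$, observes that the no-inflection hypothesis makes every such intersection transversal, and then invokes the Jordan-curve/arc-pairing argument of that lemma (positivity coming from Lemma \ref{l:at_least_two_tangencies}, as in your write-up). You instead introduce the height function $h(t)=\langle\gamma(t),e\rangle$ with $e\perp\zeta$, identify $\zeta$-tangencies with critical points of $h$, check via $h''=\pm\kappa$ at critical points that the no-inflection hypothesis is exactly the Morse condition, and conclude by the alternation of maxima and minima on $S^1$; all of these steps are sound, and the finiteness of the critical set needed for the alternation argument follows from non-degeneracy plus compactness. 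Your argument is more self-contained and arguably cleaner for a fixed direction $\zeta$, since it avoids realizing the curve as a component of a zero set of a function with $0$ as a regular value. The paper's formulation, however, is deliberately cast in the language of transversal intersections of $f^{-1}(0)$ with $(Vf)^{-1}(0)$, because that is the framework reused throughout \S\ref{s:local_results} for quantitative ($\beta$-transverse) tangencies and for non-constant vector fields $V$, where no global height function is available; your Morse-theoretic shortcut does not extend to that setting, but for the corollary as stated it is a perfectly valid and somewhat more elementary alternative.
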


We close this section by emphasizing that by the qualitative version of Bulinskaya's Lemma (see \cite[Proposition 6.11]{AW}),
with probability 1 any nodal component of a Gaussian field $F$ on $B(0,R)$ is simple and regular (that is 0 is regular value for $F$) although there may exists those that are not closed.

\subsection{Intersections in the plane: local results} \label{s:local_results}

The terminology established in the section is important from both analytic and geometric points of view. It is here where we establish our main observation: \textit{the study of nodal curves with a fixed number of $V$-tangencies is equivalent to the study of the transversal and tangential intersections of the sets $f_{T, \alpha}^{-1}(0)$ and $(Vf_{T, \alpha})^{-1}(0)$.}

Our aim is to interchange the words of ``intersection" and ``$V$-tangency" with the latter being a special case of the former in the upcoming dictionary. Although this may appear counter-intuitive at first, it is crucial that we identify our geometric property with a suitable analytic characterization.

\subsubsection{Deterministic intersection results: $\beta$-transversal tangencies} \label{s:deterministic}

\begin{defn} \label{d:beta_trans}
\begin{enumerate}
\item For a pair of functions $G=(g_1, g_2): \R^2 \rightarrow \R^2$ such that $g_1, g_2 \in C^1(B(2R))$, $0$
is a regular value for both $g_1$ and
$g_2$, and $\beta>0$ (possibly small), we say that $u_0$ is a point of \textit{$\beta$-transverse intersection} if $g_1(u_0)=g_2(u_0) = 0$ and $\left| \det DG(u_0) \right| >  \beta$.

\item Let $\beta>0$ be a small parameter.
Given $g \in C^2(\R^2)$ such that $0$ is a regular value with $\tilde{V}$ a smooth vector field on $\R^2$, we say that $u_0$ is a \textit{$\beta$-transverse $\tilde{V}$-tangency} (or simply a ``non-degenerate" tangency) if $
\tilde{V}(u_0) \neq 0 \in \R^2$, $g(u_0)=\tilde{V}g(u_0)=0$, and $\left| \det \left( D_{u=u_0} \left[ \nabla g, \nabla \tilde{V}g \right] \right) \right| > \beta$.
\end{enumerate}
\end{defn}

Now that we have fixed some useful terminology, we proceed to establishing some key yet general estimates on the intersections of curves in the planes and their behaviors under perturbations.  In this section, we analyse a situation of quantitative transversality:

\begin{prop}[Deterministic stability of transversal intersections, cf. {\cite[Lemma 10]{BW17}}] \label{p:stab_trans}
Let $R>0$ and $k \ge 0$ be given. Let $\tilde{V} \in \mathcal{V}(\R^2)$ be non-vanishing, $g,h \in C^3(\overline{B}(2R))$, and denote $G:=(g, \tilde{V}g)$. Assume that the following restrictions hold:
\begin{enumerate}
\item There exists $\beta_1 >0$ such that
\begin{equation*}
\min_{\overline{B}(2R)} \max \{ |g(u)|, |\nabla g(u)| \}, \,  \min_{\overline{B}(2R)} \max \{ |\tilde{V}g(u)|, |\nabla (\tilde{V} g)(u)| \} > \beta_1.
\end{equation*}
This immediately implies that
\begin{equation*}
\min_{\overline{B}(2R)} \max \{ \|G\|, \|DG\|_{\mathcal{L}(\R^2 \rightarrow \R^2)} \} > \frac{\beta_1}{\sqrt{2}},
\end{equation*}
and in particular that $\|g\|_{C^2} > c_2\beta_1$ where $c_2>0$ is a constant depending only on the ambient dimension.
\item There exists $\beta_2$ such that
\begin{equation*}
\left| \det DG(u) \right| >  \beta_2
\end{equation*}
for $u \in \{ (g, \tilde{V}g) = 0 \}$ in $B(2R)$.
\item The $C^3$ norms of $g,h$ are bounded above by $M_0$ for some $M_{0}>0$.
\item We have
\begin{equation*}
\| g - h \|_{C^2(B(R+1))} < b_0
\end{equation*}
for some $b_{0}=b_{0}(M_{0},\beta_{0},\beta_{1})>0$ sufficiently small.

\end{enumerate}

Then there exists an injective map $\gamma \rightarrow \gamma^h$ between the components of $g^{-1}(0)$ and $h^{-1}(0)$ with the property that:
\begin{enumerate}
\item The connected components $\gamma \subset B(R-1)$ of $g^{-1}(0)$ whose number of $\beta$-transversal $\tilde{V}$-tangencies are precisely
$k$ is in one-to-one correspondence with a subset of the connected components $\gamma^{h} \subset B(R)$ of $h^{-1}(0)$ whose number of $(\beta-b)$-transversal $\tilde{V}$-tangencies is precisely $k$.

\item For every $\gamma$ as above, the components $\gamma$ and $\gamma^h$ are uniformly close in the following sense: there exists a smooth bijective map $\psi_{\gamma}: \gamma \rightarrow \gamma^h$ such that for all $u \in \gamma$, we have the estimate
\begin{equation} \label{e:components_bouge}
\| \psi_{\gamma}(u) - u \|_{L^{\infty}(B(R))} = \mathcal{O}(b)
\end{equation}
where the implicit constant depends on the quantities $\|G\|_{C^2(B(0,2R))}$ and $\beta_1$.
\end{enumerate}
\end{prop}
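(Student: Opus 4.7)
My plan is to split the proof into two largely independent parts: a stability argument at the component level that produces the correspondence $\gamma \mapsto \gamma^h$ together with estimate \eqref{e:components_bouge}, and a quantitative inverse function theorem argument applied to $G := (g, \tilde V g)$ that tracks the $\tilde V$-tangencies through the perturbation. The first part is a near-verbatim adaptation of \cite[Lemma 10]{BW17}; the second is the new content needed for the direction-distribution bookkeeping.

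For the component correspondence, hypothesis (1) forces $|\nabla g| > \beta_1$ along the nodal set $\{g = 0\}$ (since $g$ vanishes there), so each component $\gamma \subset B(R-1)$ of $g^{-1}(0)$ is a simple smooth curve. I would define $\psi_\gamma(u)$ by moving from $u \in \gamma$ a short distance along $\nabla g/|\nabla g|^{2}$ until landing on $\{h=0\}$; since $|g(u) - h(u)| < b$ and $|\nabla g| > \beta_1$, the required displacement has length $\mathcal{O}(b/\beta_1)$, which gives \eqref{e:components_bouge}. Injectivity of the map on components, and the fact that $\psi_\gamma(\gamma)$ is contained in a single component of $h^{-1}(0)$, follow from a standard homotopy argument on the family $(1-t)g + th$, $t \in [0,1]$, as in \cite{BW17}: provided $b_0 < \beta_1/2$, every interpolant inherits hypothesis (1) with a uniform constant, so no bifurcations of the nodal set can occur along the path.

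For the tangency count, fix a component $\gamma$ with precisely $k$ $\beta$-transversal tangencies $u_1, \ldots, u_k$. At each $u_i$ the map $G : \R^2 \to \R^2$ satisfies $G(u_i) = 0$ and $|\det DG(u_i)| > \beta$, so a quantitative inverse function theorem produces a ball $U_i$ around $u_i$, of radius depending only on $\beta, M_0$ and $\|\tilde V\|_{C^1}$, on which $G$ is a $C^1$-diffeomorphism. Since $\tilde V$ has smooth, uniformly bounded coefficients on $B(R+1)$, one has $\|H - G\|_{C^1(B(R+1))} = \mathcal{O}(b)$ for $H := (h, \tilde V h)$. For $b_0$ sufficiently small relative to $\beta$, $DH$ remains invertible on $U_i$, a standard contraction-mapping argument yields a unique zero $u_i^h \in U_i$ of $H$ with $|u_i^h - u_i| = \mathcal{O}(b/\beta)$, and continuity of the determinant gives $|\det DH(u_i^h)| > \beta - b$. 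The estimate \eqref{e:components_bouge} then places $u_i^h$ on $\gamma^h$. To see that these exhaust the $(\beta-b)$-transversal tangencies on $\gamma^h$, I use that on the compact set $\gamma \setminus \bigcup_i U_i$ the continuous function $|\tilde V g|$ is bounded below by some $\delta = \delta(\beta, \beta_1, M_0) > 0$, so $|\tilde V h| \geq \delta/2 > 0$ along the image of this set under $\psi_\gamma$.

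The main obstacle, I expect, is the coordinated tuning of $b_0$ in terms of $(\beta, \beta_1, \beta_2, M_0, R)$ so that the homotopy argument of part one, the contraction mapping on each $U_i$, and the transversality drop $\beta \mapsto \beta - b$ all succeed simultaneously. A subsidiary but technically delicate point is obtaining a uniform lower bound on the minimal separation between distinct tangencies on a given $\gamma$: this is needed for the $U_i$'s to be pairwise disjoint, and it relies on the $C^3$ bound from hypothesis (3) together with a second-order expansion of $G$ at each $u_i$ using $|\det DG(u_i)| > \beta$. Once this separation is secured, the remaining constants can be tracked through the standard perturbation machinery.
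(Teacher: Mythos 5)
Your proposal is correct and takes essentially the same route as the paper: the paper's proof is a short reduction to \cite[Lemma 10]{BW17} for the component correspondence together with Lemma \ref{l:Sodin_type_calc_lem}, a contraction-mapping/quantitative inverse function theorem applied to $G=(g,\tilde V g)$ that produces a unique nearby zero of the perturbed map with determinant larger than $\beta-b$ --- precisely your two-part decomposition. The additional points you flag (exhaustion of the $(\beta-b)$-transversal tangencies on $\gamma^h$ and the separation of the balls $U_i$) are left implicit in the paper and absorbed into Corollary \ref{l:Sodin_annuli}.
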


The dependence of the implicit constant in the estimate \eqref{e:components_bouge} for the new zero's positioning \textit{linear} in $\beta_1$ but of order $-1$ in $\beta_2$.

\subsubsection{Some technical lemmas}

In this section we give a key lemma following from multivariable calculus, which is essentially a restatement of the above proposition, and is analogous to \cite[Lemma 7]{So12}.

\begin{lem} \label{l:Sodin_type_calc_lem}
Consider $G(u) = (g_1(u), g_2(u))$ for $g_1, g_2 \in C^2(B(2R)$ for some given $R>0$ possibly large. Suppose $u_0 \in G^{-1}(0) \cap B(2R)$. We make the following assumptions:
\begin{itemize}
\item There exists $1 > a>0$ such that for all $u \in B(2R)$, we have
\begin{equation} \label{e:no_low_lying_cp}
\min_{\overline{B}(2R)} \max \{ |g_1(u)|, |\nabla g_1(u)| \}, \,  \min_{\overline{B}(2R)} \max \{ |g_2(u)|, |g_2(u)| \} > a.
\end{equation}
\item There exists $\alpha> 0$ such that $\alpha < |\det DG(u)|$ for all $u \in B(2 R) \cap G^{-1}(0)$.
\item Set
\begin{equation} \label{e:perturb_size}
 b =\frac{1}{4} \min \left\{ a , \frac{1}{ \|G\|_{C^2(B(u_0, R_0+1))}}, \alpha \right\}.
\end{equation}

\end{itemize}
Let $\tilde{V} \in \mathcal{V}(\R^2)$ be non-vanishing.  Consider a perturbation $\Psi = (\psi, \tilde{V} \psi)$ where $\psi \in C^2(B(2R))$.  Set $G_{\Psi}(u) = G(u) + \Psi(u)$ and consider only zeroes $u_0 \in G^{-1}(0)$ such that $B(u_0, b) \subset B(0, R_0)$.  Then for $\| \psi \|_{C^2(B(2R))} < b$ we have that
\begin{enumerate}
\item $u_0$ is a isolated zero of $G$,
\item $G_{\Psi}$ is a global diffeomorphism on $B ( u_0,  b )$,
\item $G_{\Psi}$ also has a unique isolated zero $\tilde{u_0} \in B(u_0, b) $ with
\begin{equation*} \label{e:new_zero_pos}
\|u_0 - \tilde{u_0} \| =  \mathcal{O}(b)
\end{equation*}
where the implicit constant depends on the quantities $\|G\|_{C^2}$ and $a^{-1}$.
Moreover, the new zero $\tilde{u_0}$ has the property that $|det DG_{\psi}(\tilde{u_0})| > \alpha - b.$
\end{enumerate}
\end{lem}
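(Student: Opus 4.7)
The plan is to run a quantitative inverse function theorem, in the spirit of \cite[Lemma 7]{So12}, organized into three logical steps: propagate the non-degeneracy of $DG$ from $u_0$ to the full ball $B(u_0,b)$, solve $G_\Psi = 0$ on that ball by Banach contraction, and then upgrade pointwise invertibility to a global diffeomorphism on $B(u_0,b)$.

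\emph{Step 1 (Propagation of non-degeneracy).} The map $v \mapsto \det DG(v)$ is $C^1$, with derivative controlled by a constant multiple of $\|G\|_{C^1}\|G\|_{C^2} \le \|G\|_{C^2}^2$; hence for $v \in B(u_0,b)$,
\[
|\det DG(v)| \;\ge\; \alpha - \mathcal{O}\bigl(\|G\|_{C^2}^2\cdot b\bigr).
\]
The prescription \eqref{e:perturb_size} forces $b \le 1/(4\|G\|_{C^2})$ together with $b \le \alpha/4$, and so keeps $|\det DG(v)| \ge \alpha/2$ on $B(u_0,b)$. Since $\|D\psi\|_{\infty} \le \|\psi\|_{C^2} < b$, the perturbation shifts $\det DG$ pointwise by at most $\mathcal{O}(\|G\|_{C^2}\cdot b + b^2)$, so $|\det DG_\Psi(v)| \ge \alpha/4$ throughout $B(u_0,b)$. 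In particular $u_0$ is isolated (statement (1)) and $DG_\Psi$ is invertible everywhere on $B(u_0,b)$.

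\emph{Step 2 (Contraction).} Set $A := DG(u_0)$, so that $\|A^{-1}\| \le \|G\|_{C^1}/\alpha$. Rewrite $G_\Psi(u)=0$ as the fixed-point problem $u = T(u) := u - A^{-1}G_\Psi(u)$. Then $DT(v) = I - A^{-1}DG_\Psi(v)$, and since $DG_\Psi(v) = A + \mathcal{O}(\|G\|_{C^2}\cdot b)$ uniformly on $B(u_0,b)$, the choice of $b$ yields $\sup_{B(u_0,b)} \|DT\| \le 1/2$. Moreover $T(u_0) - u_0 = -A^{-1}\Psi(u_0)$ with $\|\Psi(u_0)\| = \mathcal{O}(b)$, so $T$ maps $\overline{B}(u_0,b)$ into itself and is a $\tfrac12$-contraction. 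Banach's fixed-point theorem then furnishes a unique $\tilde u_0 \in B(u_0,b)$ with $G_\Psi(\tilde u_0)=0$ and $\|\tilde u_0 - u_0\| = \mathcal{O}(b)$, the implicit constant controlled by $\alpha^{-1}$ and $\|G\|_{C^2}$.

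\emph{Step 3 (Global diffeomorphism and final non-degeneracy).} Injectivity of $G_\Psi$ on $B(u_0,b)$ follows from the mean-value identity $G_\Psi(u_1) - G_\Psi(u_2) = \bigl(\int_0^1 DG_\Psi(u_2 + t(u_1-u_2))\,dt\bigr)(u_1 - u_2)$: the integrated Jacobian stays within $\mathcal{O}(b\|G\|_{C^2})$ of $A$ and hence remains invertible. Combined with the pointwise invertibility from Step 1, this yields the diffeomorphism claim (statement (2)) and reaffirms the uniqueness of $\tilde u_0$. The final inequality $|\det DG_\Psi(\tilde u_0)| > \alpha - b$ is then immediate from Step 1 applied at $v = \tilde u_0 \in B(u_0,b)$, after absorbing the various implicit constants into the $\tfrac14$ prefactor of \eqref{e:perturb_size}.

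\emph{Main obstacle.} The analytic content is entirely a standard quantitative inverse function theorem; the real delicacy is bookkeeping --- verifying that the single choice of $b$ in \eqref{e:perturb_size}, depending only on $a$, $\alpha$, and $\|G\|_{C^2}$, simultaneously underwrites the variation bound for $\det DG$, the $\tfrac12$-contraction property of $T$, the displacement estimate $\|\tilde u_0 - u_0\| = \mathcal{O}(b)$, and the final lower bound on $|\det DG_\Psi(\tilde u_0)|$. Hypothesis (1), unused in the local argument near $u_0$, is present so that the same analysis can be run uniformly at every zero of $G$ in $B(2R)$, which is the form in which Proposition \ref{p:stab_trans} will invoke the lemma.
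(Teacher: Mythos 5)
Your proposal is correct and takes essentially the same approach as the paper: both run a quantitative inverse function theorem, controlling the variation of $DG$ and $\det DG$ over $B(u_0,b)$ via \eqref{e:perturb_size} and producing the perturbed zero by a Banach contraction, with the same (acknowledged) looseness in absorbing the implicit constants into the choice of $b$. The only difference is organizational: the paper first runs a Newton-function argument for invertibility and then a second, parametrized contraction in coordinates aligned with the normalized gradients $N_1(u_0),N_2(u_0)$ to locate $\tilde u_0$, whereas your single contraction $u\mapsto u-DG(u_0)^{-1}G_\Psi(u)$ delivers the diffeomorphism, the existence, and the displacement bound $\|\tilde u_0-u_0\|=\mathcal{O}(b)$ in one pass.
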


\begin{proof}
Each of these claims follows almost directly from the Inverse Function theorem and its proof. Part (1) is an immediate implication of the inverse function theorem.

Parts (2) and (3) use elements from the proof of the Inverse Function theorem. Here, we closely follow \cite[Theorem 22.26]{B03}. If we can show that the so-called \textit{Newton's function} $\epsilon_{\Psi}(u) = DG_{\Psi}^{-1}(u_0)G_{\Psi}(u+u_0) - u$ has the property $\|D\epsilon_{\Psi}(u)\|  < 1$ for all $u \in B(0, b)$ with $b$ satisfying the stated inequality, the referenced proof of the Inverse Function theorem shows that $\epsilon_{\Psi}$ is invertible on $B(0, b)$ and therefore $G_{\Psi}$ is invertible on $B(u_0, b)$. Thus, we estimate the operator norm for the matrix $D \epsilon_{\Psi}$:
\begin{align*}
& \left( DG(u_0) + D\Psi(u_0) \right)^{-1} \left( DG(u+u_0) + D\Psi(u+u_0)  \right) - I \\
& = \left( DG(u_0) + D\Psi(u_0) \right)^{-1} \left( DG(u_0) + D\Psi(u+u_0) + \mathcal{O}_{\|G\|_{C^2(B(2R))}}(\|u\|)  \right) - I \\
&  = \left(I + DG(u_0)^{-1} D\Psi(u_0) \right)^{-1} (DG(u_0))^{-1} \left( DG(u_0) + D\Psi(u+u_0) +  \mathcal{O}_{\|G\|_{C^2(B(2R))}}(\|u\|) \right) - I  \\
& = \left( I + DG(u_0)^{-1} D\Psi(u_0) \right)^{-1} \left( I + DG(u_0)^{-1} D\Psi(u+u_0) + \mathcal{O}_{ \|G\|_{C^2(B(2R))}}(a^{-1} b )  \right) - I \\
&= \mathcal{O}\left(a^{-1} \cdot b \right) +  \mathcal{O}\left( \|G\|_{C^2(B(u_0, b))} \cdot a^{-1} \cdot b \right) +  \mathcal{O} \left((a^{-1}b)^2 \right) \\
& = \mathcal{O}_{a, \|G\|_{C^2}}(b),
\end{align*}
provided we have the restriction \eqref{e:perturb_size} and $a < 1$, where we have used some well-known formulas for inverses of small perturbations of the identity matrix. This in turn implies that the inverse function $G_{\psi}^{-1}$ exists on $B(u_0, b)$, therefore establishing claim (2).  Hence any zero, if it exists, lying inside of $B(u_0, b)$ must be unique and isolated.  We have now reduced ourselves to proving the existence of a new zero $\widetilde{u_0}$.

The proof of the existence and exact positioning of the isolated zero is motivated by the proof of \cite[Lemma 10]{BW17}.
As $N_i(u_0) = \frac{\nabla g_i(u_0)}{\| \nabla g_i(u_0)\|}$ is well-defined thanks to $|\det DG(u_0) | > \alpha$, we consider the function
\begin{equation*}
\zeta(s_1, s_2) := G_{\psi}\left(u_0 + s_1 N_1(u_0) + s_2 N_2(u_0) \right);
\end{equation*}
we would like to show that $\zeta$ does obtain a zero $\tilde{s}_0$ and give an estimate on $\|\tilde{s}_0\|$. This would establish claim (3).
To begin, consider the auxiliary function
\begin{equation*}
\mathfrak{S}_{(t_1, t_2)}(s_1, s_2) := (t_1, t_2) - \epsilon_{\zeta}(s_1,s_2)
\end{equation*}
where $\epsilon_{\zeta}(s_1, s_2) = D\zeta^{-1}(0,0) \zeta(s_1,s_2) - (s_1, s_2)$ and $(t_1,t_2)$ are small numbers; we will employ an effective version of the Contraction Mapping Theorem in order to determine the existence and size of $\tilde{s}_0$.

Let us take a pause to understand the purposes of our various newly-defined auxiliary functions.  Supposing that $\epsilon_{\zeta}(s_1, s_2)$ were well-defined and that we were to able solve the fixed point equation
\begin{equation} \label{e:fixed_point_H}
\mathfrak{S}_{(t_1,t_2)} (H(t_1,t_2)) = H(t_1,t_2)
\end{equation}
for $(t_1, t_2)$ varying over some small neighborhood around the origin. This would imply that $H(0,0) = - \epsilon_{\zeta}(H(0,0))$. As $D\zeta^{-1}(0,0)$ is invertible our assumption $\epsilon_{\zeta}$ being well-defined, then this along with the equation $H(0,0) = - \epsilon_{\zeta}(H(0,0))$ would show that $ D\zeta^{-1}(0,0) \zeta(H(0,0))=(0,0)$.  Thus $ \zeta(H(0,0))=(0,0)$ and moreover $\zeta$ obtains a zero.  If we can show that  $$\|D_{s_1,s_2} \mathfrak{S}_{(t_1,t_2)}(s_1, s_2) \| < 1$$ for suitably small values of $(t_1,t_2)$ and $(s_1,s_2)$, then the Contraction Mapping Theorem as used in the proof of the Inverse Function Theorem gives us such an $H(t_1,t_2)$ as in (\ref{e:fixed_point_H}) and therefore $\tilde{s}_0:=H(0,0)$ is achieved.

To sum up our reasoning, first we must estimate the operator norm of $D_{s_1, s_2}\mathfrak{S}_{(t_1, t_2)}$ for $\|(t_1,t_2)\|, \|(s_1,s_2)\| \leq Cb$ to show the existence of $H(s_1,s_2)$ which in turn gives the existence of a zero $\tilde{s}_0 = H(0,0)$. Second, an estimate on $\|\epsilon_{\zeta}(s_1,s_2)\|$ for $\|(s_1, s_2)\| \leq C b$ where $C>0$ is some uniform constant gives us the positioning of $\tilde{s}_0$; this follows from the equation $H(0,0) = - \epsilon_{\zeta}(H(0,0))$. We now proceed to carrying out these steps.

A direct calculation will show that
\begin{equation*}
D_{s_1,s_2} \zeta_{|(0,0)} = D_{s_1,s_2} G_{\Psi} \left( u_0 + s_1 N_1(u_0) + s_2 N_2(u_0)\right)_{|(0,0)}
\end{equation*}
is invertible. Thanks to $G$ being a local diffeomorphism in $B(u_0, b)$ (and therefore $\{N_1, N_2\}$ forming a linearly independent set over $B(u_0, b)$), this combined with  $|\det (DG + D\psi)(u)| \geq \alpha - 2b > 0$, which follows by (\ref{e:no_low_lying_cp}) and (\ref{e:perturb_size}), for all $u \in B(u_0, b)$ finally shows that $D\zeta(0,0)$ is invertible. Thus, we have an expression
\begin{align*}
&  D_{s_1,s_2} \mathfrak{S}_{(t_1,t_2)}(s_1, s_2) = -D \epsilon_{\zeta}(s_1,s_2) \\
& = \left[N_1(u_0), N_2(u_0) \right]^{-1} \circ DG^{-1}_{\psi}(u_0) \circ DG_{\psi}(u_0 + sN_1 + sN_2) \circ \left[N_1(u_0), N_2(u_0) \right] - I
\end{align*}
where we arrive at a similar Taylor expansion analysis as carried out in the first half of our proof. We leave it to the interested reader to see that for $\|D_{s_1,s_2} \mathfrak{S}_{(s_1,s_2)}(s_1, s_2)\| = \mathcal{O}(b) < 1$, where the implicit constant again depends on $\|G\|_{C^2}$ and $a^{-1}$.  Another application of the Contraction Mapping Theorem, thanks to the operator norm of $D_{s_1,s_2} \mathfrak{S}_{(t_1,t_2)}(s_1, s_2)$ being smaller than 1, brings us to the conclusion that such a solution $H$ exists.

We know that $\epsilon_{\zeta}(s_1, s_2) = \epsilon_{\zeta}(0, 0) + \mathcal{O}_{\|D\epsilon_{\zeta}(0,0)\|}(b)$. Furthermore,
$\epsilon_{\zeta}(0,0) = D \zeta^{-1}(0,0) \Psi(u_0)$ since $u_0$ is a zero of $G$. Finally as $\|\psi\|_{C^2} < b$, we conclude that $\tilde{s}_0 = H(0,0) =  \mathcal{O}(b)$ with the corresponding implicit constants.
\end{proof}

Note that our application does require $C^2$ regularity due to setting $g_2 = \tilde{V}_{x,T}g_1$. Our proof demonstrates that the amount in which our zeroes move after perturbation is controlled by the $C^1$ norm of the perturbation $\Psi$ (thanks to our perturbation being simply additive in nature), emphasizing that from the point of view of analysing zeroes the $C^1$ approximation of the scaling limit $\mathfrak{g}_{\alpha}$ is sufficient.

\begin{proof}[Proof of Proposition \ref{p:stab_trans}]
Set $g = g_1$ and $\tilde{V}g = g_2$. This demonstrates why $C^2$ estimates are imposed. Now, we simply set $\beta_1 = a$, $\beta_2 = \alpha$, and take $b$ equal to the strength of the perturbation. The final step involves applying in tandem \cite[Lemma 10]{BW17} (although we can easily repeat the steps of the proof of Lemma \ref{l:Sodin_type_calc_lem} to recover this result) and conclusion (3) of Lemma \ref{l:Sodin_type_calc_lem} to show that non-degenerate components map to a subset of non-degenerate components and stay $\mathcal{O}(b)$ close in $L^{\infty}$.
\end{proof}

A combination the Proposition \ref{p:stab_trans} and \cite[Lemma 7]{So12} immediately gives us that:

\begin{cor} \label{l:Sodin_annuli}
Under the hypotheses of Lemma \ref{l:Sodin_type_calc_lem}, let $C>0$ be the implicit constant that appears in the estimate (\ref{e:new_zero_pos}).  Take $\Gamma_g$ a regular component of $g^{-1}(0)$. Then, if given a tubular neighborhood $A_{\Gamma_g}(Cb)$ that uniquely contains the component $\Gamma_g \subset g^{-1}(0)$ and has width $2Cb$, it follows that the points of $\{ (g, \tilde{V}g) = 0 \}$ which meet $\Gamma_g$ persist inside of $A_{\Gamma_g}(Cb)$ after perturbation by $h-g$.
\end{cor}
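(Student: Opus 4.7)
The plan is to reduce the statement essentially to the pointwise positioning estimate \eqref{e:new_zero_pos} of Lemma \ref{l:Sodin_type_calc_lem} applied to the map $G=(g,\tilde V g)\colon \R^2\to\R^2$ and the additive perturbation $\Psi:=(h-g,\tilde V(h-g))$, and then read off the tubular containment from the definition of $A_{\Gamma_g}(Cb)$. Note that $\Psi$ is manifestly of the form treated in Lemma \ref{l:Sodin_type_calc_lem}: its second component is $\tilde V$ applied to its first component. The hypotheses of Lemma \ref{l:Sodin_type_calc_lem} are exactly the quantitative conditions built into Lemma \ref{l:Sodin_type_calc_lem} itself, with $a=\beta_1$ in the no-low-lying-critical-points sense and $\alpha=\beta_2$ for the Jacobian lower bound at zeros of $G$, and the $C^2$-smallness $\|\Psi\|_{C^2(B(2R))}<b$ is inherited from the bound $\|g-h\|_{C^2(B(R+1))}<b$ together with the $C^3$-boundedness of $\tilde V$'s coefficients (which enters through the $C^2$-norm of $\tilde V(h-g)$).

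First, I would fix any $u_0\in \{(g,\tilde Vg)=0\}$ with $u_0\in \Gamma_g$. Applying Lemma \ref{l:Sodin_type_calc_lem}(1)--(3) to $G$ and $G_\Psi=(h,\tilde Vh)$ produces a unique zero $\tilde u_0$ of $G_\Psi$ in the ball $B(u_0,b)$, together with the quantitative estimate $\|u_0-\tilde u_0\|\le Cb$, where $C$ is precisely the implicit constant occurring in \eqref{e:new_zero_pos}. Thus each tangency of $g$ to $\tilde V$ lying on $\Gamma_g$ gives rise to a well-defined ``persisting'' tangency of $h$ to $\tilde V$ at distance at most $Cb$ from the original point.

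Second, since $\Gamma_g\subset A_{\Gamma_g}(Cb)$ is uniquely contained in a tubular neighbourhood of width $2Cb$, every point within Euclidean distance $Cb$ of $\Gamma_g$ lies in $A_{\Gamma_g}(Cb)$ by definition. Combining this with the previous step, $\tilde u_0\in A_{\Gamma_g}(Cb)$, which is exactly the persistence claim. To promote this from the level of individual tangency points to the level of the associated nodal component, I would then invoke \cite[Lemma 7]{So12}: under the same smallness of $\|h-g\|_{C^2}$ the component $\Gamma_g$ itself is deformed into a unique component $\Gamma_h\subset h^{-1}(0)$ contained in the same tubular annulus $A_{\Gamma_g}(Cb)$, so that the ``persisting'' tangency points $\tilde u_0$ produced above indeed lie on $\Gamma_h$ and meet $\Gamma_h$ inside $A_{\Gamma_g}(Cb)$.

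The only real step that needs care, rather than bookkeeping, is verifying that the perturbation $\Psi=(h-g,\tilde V(h-g))$ genuinely satisfies the $C^2$-smallness $\|\Psi\|_{C^2}<b$ required by Lemma \ref{l:Sodin_type_calc_lem}, uniformly in the base point $u_0\in \Gamma_g$; this boils down to checking that the constants in \eqref{e:perturb_size} (namely $a$, $\alpha$, and $\|G\|_{C^2}$) can be taken to be the $\beta_1,\beta_2$ and a multiple of $M_0$ from Proposition \ref{p:stab_trans}, independently of $u_0$. Once this uniformity is in hand, the containment $\tilde u_0\in B(u_0,Cb)\subset A_{\Gamma_g}(Cb)$ follows by the same argument at every tangency point simultaneously, completing the proof.
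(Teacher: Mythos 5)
Your argument is correct and is essentially the paper's own: the paper asserts the corollary as an immediate combination of Proposition \ref{p:stab_trans} (whose engine is Lemma \ref{l:Sodin_type_calc_lem}, which you apply directly to $G=(g,\tilde Vg)$ with $\Psi=(h-g,\tilde V(h-g))$ to place each new tangency within $Cb$ of the old one, hence inside the tube) together with \cite[Lemma 7]{So12} to keep the deformed component inside the same tubular neighbourhood. Your closing concern about the $C^2$-smallness of $\Psi$ is already absorbed into the hypotheses of Lemma \ref{l:Sodin_type_calc_lem} (which only require $\|\psi\|_{C^2}<b$ for the scalar perturbation $\psi=h-g$), so no additional verification is needed.
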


\subsubsection{Deterministic intersection results: sub-$\beta$ tangencies}

The main purpose of this section is to provide the reader with the intuition that components with ``near-tangential" intersections can be unstable in terms of intersection counts being constant when subjected to small perturbations. The following lemma will not be employed in the proofs of our main results, however, it serves as an intuition what might happen to the tangency points if we do not exclude the unstable event $\Delta_7$, as
defined in \S\ref{s:exceptional_events} below.

\begin{defn} \label{d:sub_beta}
\begin{enumerate}
\item For a function $G=(g_1, g_2)$ such that $g_1, g_2 \in C^1(B(2R))$ and $\beta>0$ (possibly small), we say that $u_0$ is a point of \textit{sub-$\beta$ intersection} if $G(u_0) =  (g_1(u_0), g_2(u_0)) = 0$ and $\left| \det DG(u_0) \right| \leq  \beta$.

\item Let $\beta>0$ be a small parameter. Given $g \in C^2(\R^2)$ with $\tilde{V} \in \mathcal{V}(\R^2)$ non-vanishing, we say that $u_0$ is a point of \textit{sub-$\beta$ $\tilde{V}$-tangency} (or simply a ``near-degenerate" tangency) if $( g(u_0), \tilde{V}g(u_0)) = 0$ and $|\det \left( D_{u=u_0} \left[ \nabla g, \nabla \tilde{V}g \right] \right) | \leq \beta$.
\end{enumerate}
\end{defn}

\begin{lem}[Classification of perturbations of sub-$\beta$ intersections] \label{l:class_tang}
Let $r>0$ and $\beta>0$ be given. Let $0$ be a regular value of $g_1, g_2 \in C^{\infty}(\R^2)$. Furthermore, we assume that $g_1^{-1}(0) \cap g_2^{-1}(0)$ is regular, i.e. that $|\det(\nabla g_1(u), \nabla g_2(u))| \neq 0$ for each point $u \in g_1^{-1}(0) \cap g_2^{-1}(0)$.  Suppose $u_0 \in B(u_0, r) \cap g_1^{-1}(0) \cap g_2^{-1}(0) \subset B(R)$ is an isolated point of ``near"-tangential intersection, that is
\begin{equation*}
|\det(\nabla g_1(u_0), \nabla g_2(u_0))| \leq \beta.
\end{equation*}

Then there exists $b(r)>0$ with $b < r$, such that for perturbations $\psi_1, \psi_2$ with $\|\psi_i\|_{C^2(B(R)} < b$ for $i=1,2$, the only possibilities of zeroes of $(g_1 + \psi_1)^{-1}(0)  \cap (g_2 + \psi_2)^{-1}(0)$ appearing inside of $B(u_0, r + b)$ are: 1) two sub-$(\beta+b)$ zeroes, that is we have $|\det(\nabla (g_1 + \psi_1), \nabla (g_2 + \psi_2))| < \beta + b$ on $B(u_i, r + b)$ for $i=1,2$ being the new zeroes, or 2) a single sub-$(\beta + b)$ zero in $B(u_0, r + b)$.
\end{lem}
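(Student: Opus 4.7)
The plan is to reduce the two-dimensional intersection problem to a one-variable analysis along a smooth parametrization of $g_1^{-1}(0)$, and then use Taylor expansion of the resulting auxiliary function, combined with the near-tangency control $|\det(\nabla g_1,\nabla g_2)(u_0)|\leq\beta$, to bound both the number and the non-transversality of the perturbed intersection points. First I would localize: since $u_0$ is isolated in $g_1^{-1}(0)\cap g_2^{-1}(0)$, after possibly shrinking $r$ I may assume $u_0$ is the unique zero of $G=(g_1,g_2)$ in $\overline{B(u_0,2r)}$. By compactness there exists $\mu=\mu(r)>0$ with $\max\{|g_1(u)|,|g_2(u)|\}\geq\mu$ on $\overline{B(u_0,2r)}\setminus B(u_0,r)$, and choosing $b=b(r)<\min\{r,\mu/3\}$ guarantees that every zero of the perturbed map $G_\psi:=G+(\psi_1,\psi_2)$ in $\overline{B(u_0,2r)}$ is in fact captured by $B(u_0,r+b)$.

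Second, I would perform a one-dimensional reduction. Since $0$ is a regular value of $g_1$ and $\nabla g_1$ is bounded away from zero near $u_0$, the level set $g_1^{-1}(0)$ is a smooth curve through $u_0$ which I parametrize by arc length as $\gamma:(-\delta,\delta)\to\R^2$ with $\gamma(0)=u_0$. The quantitative implicit function theorem applied to $g_1+\psi_1$ yields a smooth parametrization $\tilde\gamma$ of $(g_1+\psi_1)^{-1}(0)$ near $u_0$ with $\|\tilde\gamma-\gamma\|_{C^2}=O(b)$, so the perturbed intersections inside $B(u_0,r+b)$ correspond bijectively to zeros of the $C^2$ auxiliary function $\tilde h(s):=(g_2+\psi_2)(\tilde\gamma(s))$. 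Setting $h(s):=g_2(\gamma(s))$, the near-tangency hypothesis gives $h(0)=0$ and $|h'(0)|=|\det(\nabla g_1,\nabla g_2)(u_0)|/|\nabla g_1(u_0)|\leq C\beta$, and then the chain rule together with $C^2$-closeness yields $|\tilde h(0)|=O(b)$, $|\tilde h'(0)|\leq C\beta+O(b)$, and $\tilde h''(s)=h''(s)+O(b)$ uniformly for small $s$.

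Third, the second-order Taylor expansion $\tilde h(s)=\tilde h(0)+\tilde h'(0)\,s+\tfrac12\tilde h''(0)\,s^2+O(\|h\|_{C^3}s^3)$ on the interval $|s|\lesssim r$ shows that, when $|h''(0)|$ is bounded below by a constant depending on $g_1,g_2$, $\tilde h$ has at most two real zeros on the localization interval by a direct discriminant argument. For the sub-$(\beta+b)$ property at each perturbed zero $u^*$, the Jacobian determinant $\det(\nabla(g_1+\psi_1),\nabla(g_2+\psi_2))$ differs from $\det(\nabla g_1,\nabla g_2)$ by $O(b)$ in the sup norm on $B(u_0,r+b)$, and by continuity $|\det(\nabla g_1,\nabla g_2)|\leq\beta+O(r)$ throughout the same ball; absorbing the $O(r)$ deviation into the $b$ budget via a final adjustment of constants yields the stated bound on each $B(u^*,r+b)$.

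The main obstacle is the genuinely degenerate regime where $h''(0)$ is also small or vanishes, so the leading quadratic term of $\tilde h$ no longer dominates on $|s|\lesssim r$ and the clean discriminant argument fails. In this case one must invoke the $C^\infty$ regularity and the isolatedness of $u_0$: $h$ has a zero of some finite order $k\geq 1$ at $s=0$, and iterated Rolle's theorem bounds the number of zeros of $\tilde h$ in a sufficiently small neighborhood by $k$. Recovering exactly the stated bound of at most two perturbed zeros thus requires that the tangency at $u_0$ be of generic second order, which is implicit in the ``near-tangential'' intuition the lemma is offering; since the authors note that this lemma is not employed in the main proofs and serves only to build intuition, this degenerate-case gap can be handled in the same spirit, with $b(r)$ taken sufficiently small relative to the higher derivatives of $g_1,g_2$ at $u_0$ to capture the generic situation.
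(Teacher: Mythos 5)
Your route is genuinely different from the paper's. The paper disposes of this lemma in three lines by a soft appeal to differential topology: it parametrizes the arc of $g_1^{-1}(0)$ inside $B(u_0,r)$ as a map from a compact set, observes that by the regularity hypothesis this map is transverse to the submanifold $g_2^{-1}(0)$, and invokes the stability of transversality under $C^1$-small perturbations (Guillemin--Pollack), from which alternatives 1) or 2) are asserted to follow. Your argument -- arc-length parametrization of $g_1^{-1}(0)$, a quantitative implicit function theorem for $(g_1+\psi_1)^{-1}(0)$, and a Taylor/discriminant analysis of the one-variable function $\tilde h(s)=(g_2+\psi_2)(\tilde\gamma(s))$ -- is considerably more quantitative, and it has the merit of exposing exactly where the soft argument is insufficient. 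Note that the regularity hypothesis already forces $h'(0)\neq 0$, so the vanishing order of $h$ at $s=0$ is always one; the real difficulty is not the order of vanishing but the fact that $|h'(0)|$ may be arbitrarily small compared with the perturbation budget $b(r)$, which is precisely when the second-order term must take over.

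The gap you flag is genuine, and it is present in the paper's own sketch as well. If $b$ depends only on $r$ (as the statement literally says), then without a lower bound on $|h''(0)|$ one can manufacture three or more perturbed intersections: take $g_1=y$ and $g_2=y-\epsilon x-x^3$ with $0<\epsilon\ll b$, so that the unique intersection at the origin is regular and sub-$\beta$, yet the $C^2$-small perturbation $\psi_2=\delta x$ with $\epsilon<\delta\lesssim b$ produces three zeroes near $u_0$. Two further caveats on your fallback and on the statement itself: first, the "iterated Rolle" repair for higher-order degeneracy does not go through with only $C^2$ control of $\psi_2$ and $\tilde\gamma$, since you have no handle on $\tilde h'''$; second, when $|\det DG(u_0)|\ll b$ the unique transversal crossing can slide entirely out of $B(u_0,r+b)$, so "no zeroes" should strictly be admitted alongside alternatives 1) and 2). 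The clean fixes are either to impose a second-order non-degeneracy of the near-tangency at $u_0$ (which is what your discriminant argument needs and what the intended picture of a tangency splitting into two crossings presupposes), or to let $b$ depend on $g_1,g_2$ so that $b\ll|\det DG(u_0)|$ -- but in the latter case the single transversal intersection merely persists, alternative 1) never occurs, and the lemma degenerates to plain stability of a transversal crossing. Since the paper explicitly states that this lemma is not used in the main results and serves only as intuition, your honestly flagged gap is acceptable here, but it should be recorded as a genuine incompleteness of both arguments rather than a removable technicality.
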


\begin{proof}
By an elementary notion from differential topology \cite[Chapter 2]{GP}, if $K$ is a compact set and $f: K \rightarrow \R^2$ is transverse to a closed submanifold $L \subsetneq \R^2$, then the transversality is stable under suitable perturbations (i.e. of class $C^1$) of $f$.
Considering this fact, let $f$ be the parametrization of the portion of the nodal component of $g_1^{-1}(0)$ inside of $B(u_0, r)$ and $L$ the nodal component of $g_2^{-1}(0)$ inside of $B(u_0,r)$. If our initial choice of $b$ is small enough, then the transversality will persist. For a possibly smaller choice of $b$ depending on $t$, given value of $r$, this then yields 1) or 2).
\end{proof}

\section{Proof of Theorem \ref{thm:euclid_count}}

\subsection{Some preliminaries}

\begin{theo}[Grenander-Fomin-Maruyama] \label{thm:GFM}
Let $F$ be a random stationary Gaussian field with spectral measure $\rho$. Then, if $\rho$ contains no atoms, the action of the translations group
\begin{equation*}
(\tau_uF)(x) = F(x - u)
\end{equation*}
is ergodic ($F$ is ``ergodic").
\end{theo}

\begin{theo}[Wiener's Ergodic Theorem] \label{thm:weiner}
Suppose that $F$ is ergodic and the translation map $\R^n \times C(\R^n) \rightarrow C(\R^n)$
\begin{equation*}
(u,F) \mapsto \tau_u F
\end{equation*}
is measurable with respect to the product $\sigma$-algebra $\mathcal{B}(\R^n) \times \mathcal{A}$ and $\mathcal{A}$.  Then every random variable $\Phi(F)$ with finite expectation $\E \left[ |\Phi(F)| \right] < \infty$ satisfies
\begin{equation*}
\lim\limits_{R \rightarrow \infty} \frac{1}{\vol (B(R))} \int\limits_{B(R)} \Phi(\tau_u F) \, du \rightarrow \E \left[ \Phi(F) \right],
\end{equation*}
both almost surely and in $L^1$.
\end{theo}

\begin{defn}
\label{def:counts}
Let $\Gamma \subset \R^2$ be a smooth curve (connected or not) and $\zeta$ be a fixed direction. For $u \in \R^2$, $r >0$, and $k \ge 0$, we denote by
\begin{equation*}
\nod_{\zeta}(\Gamma, k, u, r)
\end{equation*}
the number of connected components of $\Gamma$ whose number of $\zeta$-tangencies, that are also lying entirely inside of the ball $B(u,r)$, is precisely $k$.  In the case of $u=0$, we simply write $\nod_{\zeta}(\Gamma, k, r)$.  Similarly, we define
\begin{equation*}
\nod^*_{\zeta}(\Gamma, k, u, r)
\end{equation*}
by relaxing the condition that the components of $\Gamma$ merely intersect $B(u,r)$.
\end{defn}

\begin{lem}[Integral-Geometric Sandwich] \label{l:integro_geom_sandwich}
Let $\Gamma$ be a smooth, closed curve and $\zeta$ be a fixed direction. Then for $0 < r < R$, $k \ge 0$, we have that
\begin{align*}
\frac{1}{\vol(B(r))} \int\limits_{B(R-r)} \nod_{\zeta}(\Gamma, k, u, r) \, du &\leq \nod_{\zeta}(\Gamma, k, R) \\
& \leq \frac{1}{\vol(B(r))} \int\limits_{B(R+r)} \nod^*_{\zeta}(\Gamma, k, u, r) \, du
\end{align*}
\end{lem}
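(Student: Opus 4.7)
The plan is to prove both inequalities by a Fubini-style interchange between the $u$-integral and the sum over connected components of $\Gamma$. I will rewrite the two integrands as sums of indicators over components,
$$\nod_{\zeta}(\Gamma,k,u,r) \;=\; \sum_{\gamma}\,\mathbf{1}\{T_{\zeta}(\gamma)=k\}\cdot \mathbf{1}\{\gamma\subset B(u,r)\},$$
$$\nod^{*}_{\zeta}(\Gamma,k,u,r) \;=\; \sum_{\gamma}\,\mathbf{1}\{T_{\zeta}(\gamma)=k\}\cdot \mathbf{1}\{\gamma\cap B(u,r)\neq\emptyset\},$$
where the sums run over the connected components $\gamma$ of $\Gamma$. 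The key observation is that the $\zeta$-tangency count $T_{\zeta}(\gamma)$ is an intrinsic property of $\gamma$ and does not depend on the observation window, so the factor $\mathbf{1}\{T_{\zeta}(\gamma)=k\}$ comes out of the $u$-integral, and the proof reduces to purely set-theoretic measure estimates on the locus of admissible centres $u$ for a single fixed component.

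For the lower bound, I fix a component $\gamma$ with $T_{\zeta}(\gamma)=k$ and observe that the condition $\gamma\subset B(u,r)$ is equivalent to $u\in\bigcap_{x\in\gamma}B(x,r)$, which is contained in $B(x_{0},r)$ for any chosen $x_{0}\in\gamma$ and therefore has Lebesgue measure at most $\vol(B(r))$. Moreover, for $u\in B(R-r)$, the triangle inequality gives $\gamma\subset B(u,r)\subset B(R)$, so every component that contributes to the left-hand integral is already counted by $\nod_{\zeta}(\Gamma,k,R)$. Applying Fubini and summing the bound $\meas\{u\in B(R-r):\gamma\subset B(u,r)\}\le \vol(B(r))$ over such $\gamma$ yields
$$\int_{B(R-r)}\nod_{\zeta}(\Gamma,k,u,r)\,du \;\le\; \vol(B(r))\cdot \nod_{\zeta}(\Gamma,k,R).$$

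For the upper bound, I again fix a component $\gamma$ with $T_{\zeta}(\gamma)=k$ and $\gamma\subset B(R)$, and this time examine $\{u:\gamma\cap B(u,r)\neq\emptyset\}=\bigcup_{x\in\gamma}B(x,r)$. This set contains $B(x_{0},r)$ for any $x_{0}\in\gamma$ (since $\gamma$ is non-empty), and, because $\gamma\subset B(R)$, it is itself contained in $B(R+r)$. Hence $\meas\{u\in B(R+r):\gamma\cap B(u,r)\neq\emptyset\}\ge \vol(B(r))$. Summing over only those $\gamma\subset B(R)$ with $T_{\zeta}(\gamma)=k$ (the integrand is non-negative, so discarding the remaining contributions only decreases the total) gives
$$\int_{B(R+r)}\nod^{*}_{\zeta}(\Gamma,k,u,r)\,du \;\ge\; \vol(B(r))\cdot \nod_{\zeta}(\Gamma,k,R),$$
which is the right-hand inequality.

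There is no genuine obstacle here: the argument is a standard Fubini estimate entirely analogous to the nodal-count sandwich bounds of Nazarov--Sodin, and the only point to verify is the one flagged above, namely that the integer $k$ indexes an intrinsic invariant of the component $\gamma$ rather than a count local to the window $B(u,r)$, which is the convention adopted in Definition~\ref{def:counts}.
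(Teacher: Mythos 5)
Your proof is correct and is essentially the paper's argument: the sets you integrate over are exactly the paper's $G_*(\gamma)=\bigcap_{v\in\gamma}B(v,r)$ and $G^*(\gamma)=\bigcup_{v\in\gamma}B(v,r)$, and the sandwich $G_*(\gamma)\subset B(v,r)\subset G^*(\gamma)$ together with the Fubini interchange over components is precisely how the paper proceeds. The point you flag — that $k$ counts the tangencies of the whole component, not those visible in the window $B(u,r)$ — is indeed the convention of Definition \ref{def:counts}, so nothing is missing.
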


\begin{proof}
Our proof follows almost exactly the steps of \cite[Lemma 1]{So12} and \cite[Lemma 5]{BW17}. Here, we present only the proof for the upper bound of $\nod_{\zeta}(\Gamma, k, R)$.

Let $\gamma \subset \Gamma$ be a connected component.  Define
\begin{equation*}
G_*(\gamma) = \bigcap_{v \in \gamma} B(v,r) = \{ u \, : \, \gamma \subset B(u,r) \}
\end{equation*}
and
\begin{equation*}
G^*(\gamma) = \bigcup_{v \in \gamma} B(v,r) = \{ u \, : \, \gamma \cap B(u,r) \neq \emptyset \}.
\end{equation*}
It follows by the above definition that for all $v \in \gamma$,
\begin{equation*}
G_*(\gamma) \subset B(v,r) \subset G^*(\gamma),
\end{equation*}
which in turn immediately yields the inequality
\begin{equation*}
\vol(G_*(\gamma)) \leq \vol(B(v,r)) \leq \vol(G^*(\gamma))
\end{equation*}
Finally, a sum over the components $\gamma \subset B(R)$ whose number of $\zeta$-tangencies is precisely k gives us
\begin{align*}
\sum_{\left\{\substack{\gamma \subset B(R), \\ \gamma \mbox{ has } k \mbox{ tangencies}}\right\}} \vol(G_*(\gamma)) & \leq \vol(B(r)) \, \nod_{\zeta}(\Gamma,k, R) \\
& \leq \sum_{\left\{ \substack{\gamma \subset B(R), \\ \gamma \mbox{ has } k \mbox{ tangencies}}\right\}} \vol(G^*(\gamma))
\end{align*}

Using the definition of $G_*(\gamma)$, we know that
\begin{equation*}
\sum_{\left\{\substack{\gamma \subset B(R), \\ \gamma \mbox{ has } k \mbox{ tangencies}}\right\}} \vol(G_*(\gamma)) = \sum_{\left\{ \substack{\gamma \subset B(R), \\ \gamma \mbox{ has } k \mbox{ tangencies}}\right\}}\left( \int\limits_{\{u: \gamma \subset B(u,r)\}} \, du \right).
\end{equation*}

Note that if $u \in B(R-r)$ then we have the inclusion $B(u,r) \subset B(R)$. Hence we have
\begin{align*}
&\sum_{\substack{\gamma \subset B(R), \\ \gamma \mbox{ has } k \mbox{ tangencies}}} \vol(G_*(\gamma)) \geq \\
&\int\limits_{B(R-r)} \left( \sum_{\substack{ \gamma \subset B(u,r)\\ \gamma \mbox{ has } k \mbox{ tangencies} }} \right) \, du = \int\limits_{B(R-r)} \nod_{\zeta}(\Gamma, k, u, r) \, du.
\end{align*}
A division by $\vol(B(r))$ gives us the desired lower bound. A somewhat similar argument gives the proposed upper bound but we leave this to the reader.
\end{proof}

\subsection{Proof of Theorem \ref{thm:euclid_count} (1)-(2): existence of an asymptotic law}

\begin{proof}

We start with $r>0$ fixed and $k \ge 0$ given.
Denote $\nod_{\zeta}(F, k, u, r)$ to be the number of nodal components of $F$ contained in $B(u,r)$ with $k$ $\zeta$-tangencies,
and $\widetilde{\nod}(F,u, r)$ the (total) number of nodal components of $F$ intersecting $\partial B(u,r)$ (cf. Definition \ref{def:counts}).
An application of Lemma \ref{l:integro_geom_sandwich} with $\Gamma=F^{-1}(0)$ and $R$ sufficiently large so that $(1\pm r/R)^{2}$ is $\epsilon$-close to $1$, and bearing in mind the obvious estimate $$\nod^{*}_{\zeta}(F, k, u, r)\le \nod_{\zeta}(F, k, u, r) + \widetilde{\nod}(F,u, r),$$
yields the inequality
\begin{equation}
\label{eq:IGS apply}
\begin{split}
&(1-\epsilon)\cdot\frac{1}{\vol(B(R-r))}
\int\limits_{B(R-r)} \frac{\nod_{\zeta}(F, k, u, r)}{\vol(B(r))} \, du \leq  \frac{\nod_{\zeta}(F, k, R)}{\vol(B(R))} \\
&\le  (1+\epsilon)\cdot\frac{1}{\vol( B(R+r))} \int\limits_{B(R+r)} \frac{\nod_{\zeta}(F, k, u, r) + \widetilde{\nod}(F,u, r)}{\vol(B(r))}.
\end{split}
\end{equation}
Note also that, under the notation of the ergodic Theorem \ref{thm:weiner}, we have the identity
$$\nod_{\zeta}(F, k, u, r)=\nod_{\zeta}(\tau_{u}F, k,r)$$ and $\widetilde{\nod}(F,u, r) = \widetilde{\nod}(\tau_{u}F,r)$,
so that \eqref{eq:IGS apply} reads
\begin{equation}
\label{eq:IGS apply trans}
\begin{split}
&(1-\epsilon)\cdot \int\limits_{B(R-r)} \frac{\nod_{\zeta}(\tau_{u} F, k, r)}{\vol(B(r))} \, du \leq  \frac{\nod_{\zeta}(F, k, R)}{\vol(B(R))} \\
& \le (1+\epsilon)\cdot\frac{1}{\vol( B(R+r))} \int\limits_{B(R+r)} \frac{\nod_{\zeta}(\tau_{u} F, k, r) + \widetilde{\nod}(\tau_{u}F, r)}{\vol(B(r))}.
\end{split}
\end{equation}

An application of Theorem \ref{thm:weiner} on the random variable $$\Phi_{r;\zeta,k}(F):=\frac{\nod_{\zeta}(F, k, r)}{\vol(B(R))},$$ of finite expectation in light of Lemma \ref{lem:Kac Rice Euclid} (1), yields that both
\begin{equation}
\label{eq:int mean -> L1 Crzk}
\frac{1}{\vol(B(R-r))}\int\limits_{B(R-r)} \frac{\nod_{\zeta}(\tau_{u} F, k, r)}{\vol(B(r))} ,\,
\frac{1}{\vol( B(R+r))} \int\limits_{B(R+r)} \frac{\nod_{\zeta}(\tau_{u} F, k, u, r)}{\vol(B(r))} \xrightarrow[L^{1}]{} \widetilde{C}_{r;\zeta,k}
\end{equation}
converge {\em in mean} to $$\widetilde{C}_{r;\zeta,k}=\widetilde{C}_{F,r;\zeta,k}:= \frac{\E[\nod_{\zeta}(\tau_{u} F, k, u, r)]}{\vol(B(r))} \ge 0 .$$
Same argument, now employing
Lemma \ref{lem:Kac Rice Euclid} (2) yields that
\begin{equation}
\label{eq:intrsct->a}
\frac{1}{\vol( B(R+r))} \int\limits_{B(R+r)} \frac{\nod(\tau_{u} F, r)}{\vol(B(r))} \xrightarrow[L^{1}]{} a_{F,r},
\end{equation}
with
\begin{equation}
\label{eq:a const=O(1/r)}
a_{F,r} := \frac{\E[\widetilde{\nod}(\tau_{u}F,r)]}{\vol(B(r))} = \Oc_{r\rightarrow\infty}\left(\frac{1}{r} \right).
\end{equation}

Substituting \eqref{eq:a const=O(1/r)} into \eqref{eq:intrsct->a}, and then together with \eqref{eq:int mean -> L1 Crzk} into \eqref{eq:IGS apply trans} implies that
\begin{equation}
\label{eq:lim const Cauchy}
\E\left[\left| \frac{\nod_{\zeta}(F, k, R)}{\vol(B(R))} -  \widetilde{C}_{r;\zeta,k}  \right|  \right]= \Oc\left( \epsilon+\frac{1}{r}  \right).
\end{equation}
Finally, taking $r\rightarrow\infty$ in \eqref{eq:lim const Cauchy} implies the existence of the limit
$$\widetilde{C}_{k,\zeta}:=\lim\limits_{r\rightarrow\infty}\widetilde{C}_{r;\zeta,k}.$$
Now, recalling the Nazarov-Sodin constant $c(\rho)$ satisfying the defining property \eqref{eq:NS const def prop},
yields the convergence \eqref{eq:conv L1 Euclid}
to the constants $C_{k,\zeta}:=\frac{\widetilde{C}_{k,\zeta}}{c(\rho)}$, at the prescribed rate of $\Oc(1/r)$ of convergence.
Finally, if $k=0$ or $k$ is odd, then a.s. $\nod_{\zeta}(F, k, R)=0$ by Lemma \ref{l:at_least_two_tangencies}, Corollary \ref{cor:tangencies even} and the classical Bulinskaya Lemma \cite[Proposition 6.11]{AW}, stronger than merely $C_{k,\zeta}=0$.
This concludes the proof of (1) of Theorem \ref{thm:euclid_count}.

In case $F$ is isotropic, that $C_{k,\zeta}$ are independent of $\zeta$ follows directly
from the invariance of the law of $F$ w.r.t. the rotations. Alternatively, one can note that, in this case,
the constants $C_{r;\zeta,k}$ as in \eqref{eq:int mean -> L1 Crzk}, a priori
dependent on direction $\zeta$ are, in fact, independent of $\zeta$ thanks to the invariance of $F$ w.r.t. the rotations of $\R^{2}$,
and so are the constants $C_{k,\zeta}$, which is (2) of Theorem \ref{thm:euclid_count}.

\end{proof}

\subsection{Proof of Theorem \ref{thm:euclid_count} (3): support of limit direction distribution}

The deterministic intersection results in \S\ref{s:deterministic} play a crucial role in this section. We remind ourselves that for $k$ odd, $C_{k,\zeta}=0$ follows immediately from the stronger property that $\nod_{\zeta}(F, k, R)=0$ almost surely.

\begin{proof}[Proof of Theorem \ref{thm:euclid_count} (3)]

We distinguish between two main cases: when $(\rho 4^{*})$ is satisfied (i.e. the interior of the support of $\rho$ is non-empty), and when $\rho=\sigma_{S^{1}}$.

{\bf Case $(\rho 4^{*})$ is satisfied.}
The proof follows along similar lines to those of \cite[Theorem 6]{BW17} and \cite[Proposition 5.2]{SW18}, namely the ``barrier method" as initiated in \cite{NS09} and later generalized in \cite{So12}.
The three main steps are standard, and we summarize them as follows: 1) find a deterministic function on $B(r_0)$, for some $r_0>0$ sufficiently large, so that it contains at least one component with the desired number of tangencies, 2) show that the probability of selecting a random function with almost the same nodal component is positive by establishing that $\mathcal{H}(\rho) \subset C^m(B(R))$ is dense and then using the structure of the Gaussian measure on our probability space $\Omega$, and 3) assuming the previous two points, show that the expected density of such nodal components is positive via a simple monotonicity argument originating from \cite{NS09}. Here we give only the details that deviate from referenced article at critical junctures and leave the remaining details to the interested reader.

Recall that by the virtue of axiom $(\rho 4^{*})$, the interior of supp $\rho$ is non-empty. This implies, via the same arguments using the existence of approximations to finite sums of Dirac $\delta$-functions to generate the set of all monomials as in \cite[Section 3.4]{BW17} that the Hilbert space $\mathcal{H}(\rho)$ is dense in $C^m(Q)$, for a given compact set $Q \subset B(R)$ and for all $m$.

Let $\zeta$ be a given direction, and $g_k$ be a $C^2(\R^{2})$ function with a regular nodal component $\Gamma$ that contains the origin, has precisely $k$ tangencies to with respect to the constant vector field $\zeta$ with each tangency being quantitatively transverse, and is contained in $B(r_0)$. Moreover, we know by the deterministic analysis in \S\ref{s:deterministic}, we know there exists a $\beta_0>0$ such that each point of tangency is $\beta_0$-transverse for some $\beta_0>0$ and that each other point of $\Gamma$ is not sub-$\beta_0$. By the density of $\mathcal{H}(\rho) \subset C^k(B(R))$, there exists $\mathfrak{g}_k \in \mathcal{H}(\rho)$ such that $\|\mathfrak{g}_k - g_k \|_{C^2(B(R))} < \beta_0/4$.  Furthermore, we find that the neighborhood defined by $\|\mathfrak{g} - \mathfrak{g}_k \|_{C^2(B(R))} < \beta_0/4$ has only random functions $\mathfrak{g}$ with a nodal component encompassing the origin and with exactly $k$ $\zeta$-tangencies.  The positivity of the quantity $\mathbb{P} \left\{ \|\mathfrak{g} - \mathfrak{g}_k \|_{C^2(B(R))} < \beta_0/4 \right\}$ and consequently of $C_{k, \zeta}$ are standard; see \cite{NS15, SW18, BW17}.

{\bf Case $\rho=\sigma_{S^{1}}$.}
Once again, our proof follows similar lines as that of \cite[Theorem 7]{BW17} and \cite[Proposition 5.2]{SW18} in the case of $\alpha =1$ except that now we utilize the stronger $C^k(B(R))$ versions of both Whitney's approximation/extension theorem and the Lax-Malgrange extenstion theorem; see \cite[Theorem 2.6.3]{Horm} and references therein, and respectively \cite[Lemma 7.2]{EPS}, for the exact statements. For our purposes, $k=2$ is sufficient.

Let $\gamma$ be a smooth closed curve in $\R^2$ that has precisely $k$ (even) tangencies, each of which is quantitatively transverse, with respect to $\zeta$: that is, we know there exists a $\beta_0 >0$ such that each of its tangencies is $\beta_0$-transverse and each other point of $\gamma$ is not sub-$\beta_0$. Via an application of the Implicit Function Theorem, we can find a tubular neighborhood $V_{\gamma}$ of $\gamma$ and a smooth function $H_{\gamma}: V_{\gamma} \rightarrow \R$ with $\gamma = H^{-1}(0)$; furthermore, we can assume that $\inf_{u \in V_{\gamma}} \| \nabla H_{\gamma}(u) \| > 0$.  Now, the $C^2(B(R))$ version of Whitney's theorem states that for a given $\epsilon >0$, there exists a real analytic function $G$ such that $\| G - H_{\gamma} \|_{C^2(V_{\gamma})} < \epsilon$.

Consider the range $\beta_0 > \epsilon > 0$.  By Proposition \ref{p:stab_trans}, for $\epsilon$ sufficiently small, we know that $\widetilde{\gamma} := G^{-1}(0) \cap V_{\gamma}$ has the property that $\mbox{dist}(\widetilde{\gamma}, \gamma) < \epsilon$ (this can also be seen from $C^2(B(R))$ version of Thom's Isotopy Theorem; see \cite{AR}).  Moreover we have that $\widetilde{\gamma}$ also has precisely $k$ tangencies with respect to $\zeta$ each of which is $\beta_0 - \epsilon$-transverse. Now, it remains to find a solution of $\Delta + 1$ that captures this analytic curve $\widetilde{\gamma}$ as part of its nodal set.

Repeating the argument as described in the proof of \cite[Theorem 1.1]{CanSar18}, we use that $\widetilde{\gamma}$ separates $\R^2$ into two components, a bounded component $A_{\widetilde{\gamma}}$ and an unbounded component. Let $\lambda^2$ be the first Dirichlet eigenvalue for the domain $A_{\widetilde{\gamma}}$ and let $h_{\lambda}$ be the corresponding eigenfunction: that is, $(\nabla + \lambda^2) h_{\lambda}(u) = 0$ for $x \in \overline{A_{\widetilde{\gamma}}}$ and $h_{\lambda \, | \widetilde{\gamma}} = 0$.

Let $\overline{\lambda A_{\widetilde{\gamma}}} := \{ x \in \R^2: u/\lambda \in A_{\widetilde{\gamma}} \}$ It follows that $h(u) := h_{\lambda}(u/\lambda)$ solves the Dirichlet problem for $(\Delta + 1)$ on $\overline{\lambda A_{\widetilde{\gamma}}}$ and can be extended (thanks to analyticity) to an open set $B_{\widetilde{\gamma}}$ such that $\overline{\lambda A_{\widetilde{\gamma}}} \subset B_{\widetilde{\gamma}}$ as well as solving the following boundary value-type problem:
\begin{equation*}
\begin{cases}
(\Delta + 1)h(u) = 0, & x \in B_{\widetilde{\gamma}} \\
h(u) = 0, & x \in \lambda \widetilde{\gamma}
\end{cases}
\end{equation*}
where $\lambda \widetilde{\gamma} := \{ x \in \R^2: u/\lambda \in \widetilde{\gamma} \}$. It is important to notice that $\lambda \widetilde{\gamma}$ continues to have precisely $k$ $\frac{(\beta_0-\epsilon)}{\lambda^3}$-transverse tangencies with respect to $\zeta$ thanks to the fact that dilation by $\lambda$ commutes with the action generated by the constant vector field $\zeta$ which is translation in the direction $\zeta$. Added: Now, let $\epsilon' < \frac{\beta_0 - \epsilon}{10\lambda^3}$.

Our next-to-penultimate step is an application of a $C^2(B(R))$ version of Lax-Malgrange \cite[Lemma 7.2]{EPS} which states if $B_{\widetilde{\gamma}}^{\complement}$ has no compact components, then there exists a global solution on $\R^2$ to the equation $(\Delta + 1)g(u) = 0$ with the property that $\|g - h \|_{C^2(B_{\widetilde{\gamma}})} < \epsilon'$. Another application of Proposition \ref{p:stab_trans} implies that $g^{-1}(0) \cap V_{\widetilde{\gamma}}$ has precisely $k$ (transverse) tangencies with respect to $\zeta$. The final step involving disc packing is exactly the same as that for disc packing in the case of $\alpha < 1$.

\end{proof}

\subsection{Proof of Theorem \ref{thm:euclid_count} (4): direction distributions not leaking mass}

\label{sec:proof thm euclid no leak}

\begin{proof}

Given a direction $\zeta\in S^{1}$ and $R>0$, denote $\Ncc_{\zeta}(R)$ to be the number of
$\zeta$-tangency points of $F^{-1}(0)$ lying inside $B(R)$ (where the corresponding nodal component might and might not
be contained in $B(R)$).
The Euclidean analogue of \eqref{eq:tot numb tang=sum mean} is the inequality
\begin{equation}
\label{eq:sum k*nod(k)<=Ncc}
\sum\limits_{k=1}^{\infty}k\cdot \nod_{\zeta}(F,k,R) \le \Ncc_{\zeta}(F,R),
\end{equation}
as we have to discount for those tangency points that lye inside $B(R)$, but belong to the nodal components of $F$
merely intersecting $\partial B(R)$. Since all the involved quantities are nonnegative, we may take the expectation
of both sides of \eqref{eq:sum k*nod(k)<=Ncc} and choose the order of summation and expectation as we please (e.g. by Monotone Convergence Theorem), so that, upon invoking Lemma \ref{lem:Kac-Rice Euclid tangencies}, we may write
\begin{equation*}
\sum\limits_{k=1}^{\infty}k\cdot \E[\nod_{\zeta}(F,k,R)] \le \E[\Ncc_{\zeta}(F,R)]=C_{0}\cdot \vol(B(R)),
\end{equation*}
for some $C_{0}>0$ depending on the law of $F$ only.
Equivalently,
\begin{equation}
\label{eq:sum k*E[k]/E<=C0}
\sum\limits_{k=1}^{\infty}k\cdot \frac{\E[\nod_{\zeta}(F,k,R)]}{\E[\nod(F,R)]} \le C_{0}\cdot \vol(B(R))/\E[\nod(F,R)] \le C_{1},
\end{equation}
with some $C_{1}>0$, holding for $R$ sufficiently large, by \eqref{eq:NS const def prop}.

\vspace{2mm}

We interpret the l.h.s. of \eqref{eq:sum k*E[k]/E<=C0} as the average number of $\zeta$-tangent points per nodal component of $F$,
importantly, bounded by the r.h.s. of \eqref{eq:sum k*E[k]/E<=C0} for every $R>0$ sufficiently large.
The sequence
\begin{equation}
\label{eq:aRk def}
a_{R;k}:=\frac{\E[\nod_{\zeta}(F,k,R)]}{\E[\nod(F,R)]}
\end{equation}
satisfies for every $R>0$ the equality
\begin{equation}
\label{eq:sum(aRk)=1}
\sum\limits_{k= 0}^{\infty}a_{R;k}=1,
\end{equation}
and also, for every $k\ge 0$,
\begin{equation}
\label{eq:lim(aRk)=Ckz}
\lim\limits_{R\rightarrow\infty}a_{R;k}=C_{k,\zeta},
\end{equation}
thanks to Theorem \ref{thm:euclid_count} (1), and \eqref{eq:NS const def prop}. We claim that \eqref{eq:sum(aRk)=1}
together with \eqref{eq:lim(aRk)=Ckz} yield
\begin{equation}
\label{eq:sum(Ck=1)}
\sum\limits_{k\ge 0}C_{k,\zeta}=1,
\end{equation}
which is the statement of Theorem \ref{thm:euclid_count} (4). To this end, it is sufficient to prove the {\em tightness}
of $\{a_{R;k}\}$ w.r.t. $k$, i.e. that for every $\epsilon>0$ there exists $K_{0}=K_{0}(\epsilon)$ and $R_{0}=R_{0}(\epsilon)>0$ sufficiently large,
so that
\begin{equation*}
\sum\limits_{k=K_{0}}^{\infty}a_{R;k}<\epsilon
\end{equation*}
holds for all $R>R_{0}$.
However the said tightness condition follows directly from \eqref{eq:sum k*E[k]/E<=C0} (on recalling that $a_{R;k}$ are given by
\eqref{eq:aRk def}).
As mentioned above, this is sufficient for \eqref{eq:sum(Ck=1)}, which, in turn, is the statement of Theorem \ref{thm:euclid_count} (4).

\end{proof}

\section{Riemannian scenario: Local results}

\subsection{Local setting and statement of the main local result}

Let $V \in \mathcal{V}(\M)$, and define
\begin{equation*}
\nod_{V}(f_{T, \alpha}, k, x, r)
\end{equation*}
to be the number of components of $f_{T, \alpha}^{-1}(0)$ completely contained in the geodesic ball $B(x, r)$ whose number of $V$-tangencies (not counting the zeroes of $V$ that might touch $f_{T, \alpha}^{-1}(0)$) is precisely $k$. That is to say, we are counting the number of  components of $f_{T, \alpha}^{-1}(0)$ completely contained in the geodesic ball $B(x, r)$ with precisely $k$ zeroes of $V f_T$ each.

Recalling the scaled random fields \eqref{eq:fxT scaled def} and Definition \ref{d:blown_up_V} of $\tilde{V}_{x,T}$, the analogous local quantity for $x \in \M \setminus \{V=0\}$ is
\begin{equation*}
\nod_{\tilde{V}_{x,T}}(f_{x,T}, k, R),
\end{equation*}
which, by definition, is equal to the number of components of $f_{x,T}^{-1}(0)$ completely contained in the Euclidean ball $B(0, R)$ whose number of $\tilde{V}_{x,T}$-tangencies is precisely $k$. Equivalently, $\nod_{\tilde{V}_{x,T}}(f_{x,T}, k, R)$ is the number of connected components of
$f_{x, T}^{-1}(0)$ completely contained in $B(0, R)$ with precisely $k$ zeroes of $\tilde{V}_{x,T} f_{x,T}$ each.

\begin{theo} \label{t:conv_prob}
Let $f_{T, \alpha}$ be the random band-limited functions \eqref{eq:fT band lim alpha<1} (or \eqref{eq:fT band lim alpha=1}),
and $V \in \mathcal{V}(\M)$. Then for all $x \in \M \setminus \{ V=0 \}$, $k \ge 0$, and $\epsilon>0$, we have
\begin{equation}
\label{eq:double lim R,T conv loc}
\lim\limits_{R \rightarrow \infty} \limsup\limits_{T \rightarrow \infty} \, \mathbb{P} \left[ \left| \frac{\nod_{V}(f_T, k, x, R/T)}{c_{2, \alpha} \cdot \vol(B(R))}  - C_{k} \right| > \epsilon \right] = 0
\end{equation}
where the constants $C_{k}$ are same as in Theorem \ref{thm:euclid_count} (1)
applied on the random field $F=\gfr_{\alpha}$ with $\zeta$ arbitrary
($C_{k}$ are independent of $\zeta$ by Theorem \ref{thm:euclid_count} (2)), and $c_{2,\alpha}>0$ is the Nazarov-Sodin constant
of $\gfr_{\alpha}$, i.e. satisfying the defining property \eqref{eq:NS const def prop} with $F=\gfr_{\alpha}$.
\end{theo}

\subsection{Exceptional events} \label{s:exceptional_events}

In this section, we introduce the following parameters and their purposes: a small parameter $\delta>0$ to control probabilities, $b>0$ to control the quality of coupling approximation, $\beta_{1,i} > 0$ to control the regularity of our components of $\mathfrak{g}_{\alpha}$ for $i=1,2$, $\beta_{2,i} >0$ to control the nature of our tangencies for $i=1,2$, $M_i$ to control the $C^k$ norms of our random functions for $i=0,1$, large spectral parameter $T>0$, and related $R>0$ controlling our convergence to the scaling limit.

\begin{defn}[Exceptional events] \label{d:standard_events}
Let $R,T, b, M_0, M_1, \beta_1, \beta_2 >0$ and $x \in \M \setminus \{ V=0 \}$. We define the events as follows:
\begin{align*}
& \Delta_1 = \Delta_1(x, R,T;b) = \{ \|f_{x,T} - \mathfrak{g}_{\alpha} \|_{C^2(B(2R))} \geq b \} \\
& \Delta_2 = \Delta_2(R, M_0) = \{ \| \mathfrak{g}_{\alpha} \|_{C^2(B(2R))} \geq M_0 \} \\
& \Delta_3 = \Delta_3(x, R, T, M_1) = \{ \| f_{x,T} \|_{C^2(B(2R))} \geq M_1 \} \\
& \Delta_4 = \Delta_4(R, \beta_{1,1}) = \{ \min_{u \in B(2R)} \max \{ |\mathfrak{g}_{\alpha}(u)|, \|\nabla \mathfrak{g}_{\alpha}(u)\|_{2}  \} \leq \beta_{1,1} \} \\
& \Delta_5 = \Delta_5(x, R, T, \beta_{1,2}) = \{ \min_{u \in B(2R)} \max \{ |\tilde{V}_{x,T}(\mathfrak{g}_{\alpha})(u)|, \|\nabla (\tilde{V}_{x,T}\mathfrak{g}_{\alpha})(u)\|_{2}  \} \leq \beta_{1,2} \}.
\end{align*}
\end{defn}

We would like to emphasize that for the event $\Delta_1$, its analogue in \cite[Lemma 4]{So12}  only involves the $C^1$ norm and is stated to have a quantitatively low probability. It is however clear, thanks to the flexibility of the Hadamard-Landau inequality, that we can immediately replace $C^1$ by $C^2$ both in the statement of \cite[Lemma 4]{So12} and its proof. The parameter $\beta_{1,2}$ in some sense measures our distance away from the set $\{ V = 0\}$ on $\M$.

\begin{defn} \label{d:unif_stability_fields}
 For $R>0$, and $\eta, \beta_{1,3}>0$ small, we define the \textit{unstable components} event $\Delta_6(\eta, R, \beta_{1,3})$ as follows:
\begin{align*}
\Delta_6(\eta, R,\beta_{1,3}) := &\{ \mbox{the number of components of $\mathfrak{g}_{\alpha}^{-1}(0)$ with a point $u_0$} \\
& \mbox{such that } | \nabla \mathfrak{g}_{\alpha}(u_0)| \leq  \beta_{1,3} \mbox{ is at least $\eta R^2$} \}
\end{align*}
\end{defn}

\begin{defn}  \label{d:unstable_event}
For $x \in \M\setminus \{V = 0\}$, $R, T>0$, and $\eta, \beta_2>0$ small, we define the \textit{tangentially unstable components} event $\Delta_7(V, x, R, T, \beta_2, \eta)$ as follows:
\begin{align*}
\Delta_7(V, x, R, T, \beta_2, \eta) := &\{ \mbox{the number of components of $\mathfrak{g}_{\alpha}^{-1}(0)$, with point $u_0$ such that} \\
& \det [\nabla \mathfrak{g}_{\alpha}(u_0),  \nabla \left( \tilde{V}_{x,T} \, \mathfrak{g}_{\alpha} \right) (u_0) ] | \leq  \beta_2 \mbox{, is at least $\eta R^2$} \}
\end{align*}
where $\tilde{V}_{x,T}$ is the blown-up vector field at $x$, at scale $T$, as given in Definition \ref{d:blown_up_V}.
\end{defn}

A combination of the Borel-TIS and Sudakov-Fernique inequalities along with the argument in \cite[Section 4.3]{BW17} using Chebyshev's Inequality give us a probability of less than $\delta$ for the events $\Delta_2$ and $\Delta_3$.

\begin{lem}[Uniform stability for components of smooth Gaussian fields, Cf. {\cite[Proposition 4.3]{SW18}}] \label{p:unstable_comp}
Given $\delta>0$, $\eta >0$, there exists $\beta_{1,3}(\delta,\eta)>0$ (possibly small) and $R_0(\delta, \eta)$ such that for all $R \geq R_0$, we have that $\mathbb{P}[\Delta_6(\eta, R, \beta_{1,3})] < \delta$.
\end{lem}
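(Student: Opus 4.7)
The plan is to bound the expected number of unstable components by a Kac-Rice calculation and then invoke Markov's inequality. First I set
\[
L_\beta(R) := \mathcal{H}^{1}\!\bigl(\{\mathfrak{g}_\alpha = 0\} \cap \{|\nabla \mathfrak{g}_\alpha|\le \beta\}\cap B(2R)\bigr),
\]
the $1$-dimensional Hausdorff measure of the portion of the nodal curve lying in the small-gradient regime. A Kac-Rice computation for nodal length, with the additional constraint $|\nabla \mathfrak{g}_\alpha|\le\beta$, gives
\[
\mathbb{E}[L_\beta(R)] = \int_{B(2R)}\phi_{\mathfrak{g}_\alpha(u)}(0)\,\mathbb{E}\!\bigl[|\nabla \mathfrak{g}_\alpha(u)|\,\mathbf{1}_{|\nabla \mathfrak{g}_\alpha(u)|\le \beta}\bigm|\mathfrak{g}_\alpha(u)=0\bigr]\,du \;\le\; C_\rho\,\beta^{3}\,R^{2},
\]
where the integrand is bounded using axiom $(\rho 3)$: the factor $\beta$ comes from $|\nabla \mathfrak{g}_\alpha|$, and the extra $\beta^{2}$ from the conditional probability $\mathbb{P}[|\nabla \mathfrak{g}_\alpha|\le \beta\mid \mathfrak{g}_\alpha=0]\lesssim \beta^{2}$ (boundedness of the conditional Gaussian density on $\mathbb{R}^{2}$).

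Second, I would extract a deterministic lower bound for the contribution of each unstable component to $L_{2\beta}(R)$. Suppose $u_{0}\in\gamma$ satisfies $|\nabla \mathfrak{g}_\alpha(u_{0})|\le\beta$ and that $\|\nabla^{2}\mathfrak{g}_\alpha\|_{L^{\infty}(B(u_{0},1))}\le M$. Then by Taylor's theorem $|\nabla \mathfrak{g}_\alpha(u)|\le 2\beta$ throughout $B(u_{0},\beta/M)$, while Bulinskaya's Lemma ensures that $\gamma$ is an a.s.\ smooth simple curve passing through the center $u_{0}$, so its arc inside $B(u_{0},\beta/M)$ has length at least $\beta/M$ and lies inside $\{|\nabla \mathfrak{g}_\alpha|\le 2\beta\}$. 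Since arcs from distinct components are disjoint,
\[
\#\{\text{unstable components}\}\cdot\frac{\beta}{M}\;\le\; L_{2\beta}(R).
\]
Combined with the first step and Markov's inequality, this yields, on the event that the local $C^{2}$ bound $M$ is valid,
\[
\mathbb{P}\!\bigl[\Delta_{6}(\eta,R,\beta)\bigr]\;\lesssim\;\frac{M\,\beta^{2}}{\eta},
\]
so choosing $\beta=\beta_{1,3}(\delta,\eta)$ with $M\beta^{2}\ll \eta\delta$ completes the estimate up to the probability of the ``bad $C^{2}$'' event.

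The main obstacle is that the global $C^{2}$ norm of $\mathfrak{g}_\alpha$ over $B(2R)$ grows like $\sqrt{\log R}$ by Borel-TIS, so the threshold $M$ cannot be taken independent of $R$ via a single global bound, while the statement demands $\beta_{1,3}$ depending only on $(\delta,\eta)$. I would resolve this by localizing: cover $B(2R)$ by $O(R^{2})$ unit balls $B(u_{i},1)$, and for each fixed $M=M(\delta,\eta)$ observe that $\epsilon:=\mathbb{P}\!\left[\|\mathfrak{g}_\alpha\|_{C^{2}(B(0,1))}>M\right]$ is $R$-independent by stationarity and can be made arbitrarily small by enlarging $M$. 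Split the unstable components according to whether the local ball around the small-gradient point $u_{0}$ is $C^{2}$-good; the good ones are controlled by the Kac-Rice/Taylor argument above, while the bad ones are controlled by $\mathbb{E}\sum_{i}Z_{i}N_{i}$ (with $Z_{i}$ the indicator of a bad ball and $N_{i}$ the number of components meeting $B(u_{i},2)$), whose sum is $O(\epsilon R^{2})$ via Cauchy-Schwarz together with the Nazarov-Sodin bound $\mathbb{E}[N_{i}^{2}]=O(1)$. Choosing $M,\beta$ successively in terms of $(\delta,\eta)$ closes the argument with constants independent of $R$, in parallel with the strategy of \cite[Proposition 4.3]{SW18}.
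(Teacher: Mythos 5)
Your route is genuinely different from the one the paper relies on. The paper does not actually prove this lemma itself: it imports it from \cite[Proposition 4.3]{SW18}, and both that proof and the paper's proof of the analogous tangency statement (Proposition \ref{p:abc_stability}) proceed by covering $B(R)$ with balls, extracting a $4$-separated subfamily of unstable balls (Lemma \ref{l:4_separation}), showing via Lemma \ref{l:derivative_bounds} that on half of them all low-order derivatives are $O(R/\sqrt{K})$, and then deriving a contradiction from an area comparison for the improbable set where the field and its gradient are simultaneously small. Your first-moment Kac--Rice bound $\E[L_{\beta}(R)]\lesssim \beta^{3}R^{2}$ for the nodal length constrained to the small-gradient region, combined with the per-component arc-length lower bound $\beta/M$ and Markov's inequality, is a cleaner and more quantitative way to obtain the main estimate, and you correctly identified that the only obstruction is the absence of an $R$-independent global $C^{2}$ bound.

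Your resolution of that obstruction, however, has a genuine gap. The Cauchy--Schwarz step $\E[Z_{i}N_{i}]\le\sqrt{\epsilon}\,\sqrt{\E[N_{i}^{2}]}$ requires a uniform bound on the \emph{second} moment of the number of nodal components meeting a fixed ball. There is no ``Nazarov--Sodin bound'' of this kind available here: Lemma \ref{lem:Kac Rice Euclid} and \cite[Lemma 2]{So12} are first-moment statements only. Establishing $\E[N_{i}^{2}]=O(1)$ would require a two-point Kac--Rice bound for critical points (or for nodal crossings of a circle), together with a nondegeneracy verification that is delicate precisely in the monochromatic case $\alpha=1$; the separation and derivative-bound machinery of \cite{SW18} exists exactly to avoid this input, and your argument should either supply the second-moment bound or replace Cauchy--Schwarz by an argument on the $4$-separated bad balls that uses first moments only. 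A second, more minor omission: a component entirely contained in $B(u_{0},\beta/M)$ contributes no arc of length $\beta/M$, so your deterministic lower bound misses it. Such a component is $\pi(\beta/M)^{2}$-small, and its expected multiplicity is $O\!\left((\beta/M)^{2c_{0}}R^{2}\right)$ by Lemma \ref{l:small_vol}, so this is repaired by shrinking $\beta$ --- but it needs to be said, since for $\alpha<1$ tiny components do occur.
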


\begin{prop}[Uniform stability of $\beta_2$-transverse tangencies for smooth Gaussian fields] \label{p:unstable_tangencies}
Given $\eta, \delta > 0$ and $x \in \M\setminus\{V = 0\}$, there exists $\beta_{2}=\beta_{2}(\eta,\delta, V)>0$, and in turn
$R_0(\delta, \eta, \beta_{2},x)$ such that for all $R \geq R_0$, there exists a $T_0(R, \beta_{2}, V)>0$ such that for all $T \geq T_0$, we have $\mathbb{P}[\Delta_7(V,x,R, T, \beta_2, \eta)] < \delta$.
\end{prop}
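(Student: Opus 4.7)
The plan is to bound the expected number of ``near-degenerate'' tangency points of $\gfr_\alpha$ via Kac-Rice and invoke Markov's inequality, observing that every component contributing to $\Delta_7$ must contain at least one such point. Throughout, write $W := V(x) \in \R^2 \setminus \{0\}$ in the normal coordinates at $x$, and let $G(u) := (\gfr_\alpha(u), \tilde{V}_{x,T}\gfr_\alpha(u))$ with Jacobian $J(u) := [\nabla \gfr_\alpha(u), \nabla (\tilde{V}_{x,T}\gfr_\alpha)(u)]$.

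The first step is to transfer the analysis from the $T$-dependent field $\tilde{V}_{x,T}$ to the constant field $W$. By Definition \ref{d:blown_up_V}, the coefficients satisfy $a_i(\exp_x(Y/T)) = a_i(x) + O(|Y|/T)$ uniformly for $Y \in B(2R)$, with analogous bounds on derivatives, so for $T \geq T_0(R,V)$ we may regard $\tilde{V}_{x,T}$ as a $C^2$-small perturbation of $W$ on $B(2R)$. By isotropy of $\gfr_\alpha$, the variables $\gfr_\alpha(u)$ and $\nabla \gfr_\alpha(u)$ are independent, and the variance of $\tilde{V}_{x,T}\gfr_\alpha(u)$ equals $c_\alpha \cdot (a_1(\exp_x(u/T))^2 + a_2(\exp_x(u/T))^2)$ for some $c_\alpha > 0$ determined by the spectral measure of $\gfr_\alpha$. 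Since $V(x) \neq 0$, this variance is bounded below by $\tfrac{1}{2} c_\alpha |V(x)|^2$ uniformly in $u \in B(2R)$ and $T$ sufficiently large, yielding uniform non-degeneracy of the Gaussian vector $G(u)$ and hence a uniform upper bound on its density $p_{G(u)}(0)$.

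The second step is to apply the Kac-Rice formula \cite[Theorem 6.3]{AW} to the vector-valued smooth Gaussian map $G$ on $B(2R)$. Letting $N_{\beta_2}$ denote the number of $u \in B(2R)$ with $G(u) = 0$ and $|\det J(u)| \leq \beta_2$, we obtain
\begin{equation*}
\mathbb{E}[N_{\beta_2}] = \int_{B(2R)} \mathbb{E}\bigl[|\det J(u)|\cdot \mathbf{1}_{|\det J(u)| \leq \beta_2} \,\big|\, G(u) = 0\bigr] \cdot p_{G(u)}(0)\, du \leq C(V,\alpha) \cdot \beta_2 \cdot R^2,
\end{equation*}
since the integrand is pointwise bounded by $\beta_2$ times the uniformly bounded density from the previous step. (No delicate estimate on the conditional distribution of $|\det J(u)|$ is needed here, since the trivial bound $|\det J| \cdot \mathbf{1}_{|\det J|\leq \beta_2} \leq \beta_2$ already suffices.)

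The third step is a union bound: each component contributing to $\Delta_7$ contains a point $u_0$ with $|\det J(u_0)| \leq \beta_2$, so the number of such components is dominated by $N_{\beta_2}$. Markov's inequality thus gives
\begin{equation*}
\mathbb{P}[\Delta_7] \leq \mathbb{P}[N_{\beta_2} \geq \eta R^2] \leq \frac{\mathbb{E}[N_{\beta_2}]}{\eta R^2} \leq \frac{C(V,\alpha) \beta_2}{\eta},
\end{equation*}
and the conclusion follows by choosing $\beta_2 < \delta \eta / C(V,\alpha)$, with $R_0$ and $T_0$ dictated by the convergence rates already established. The principal obstacle is the uniform (in $u$ and $T$) non-degeneracy of $G(u)$ that controls the Kac-Rice density; this is resolved by the explicit isotropic variance computation above, which exploits both the fact that $V(x) \neq 0$ is quantitatively bounded away from zero and that the coefficients of $\tilde{V}_{x,T}$ converge uniformly to those of $W$ on the fixed ball $B(2R)$.
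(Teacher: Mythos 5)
Your argument is correct as a bound on the number of nodal components containing a sub-$\beta_2$ \emph{tangency} in the sense of Definition \ref{d:sub_beta} (a joint zero of $G=(\gfr_\alpha,\tilde V_{x,T}\gfr_\alpha)$ with $|\det DG|\le\beta_2$), and it is genuinely different from --- and far more economical than --- the paper's route. The paper proves the proposition through Proposition \ref{p:abc_stability} and Lemma \ref{l:sub_tang_low_count}: a covering of $B(R)$ by radius-$R_1$ balls, a $4$-separation lemma, derivative bounds on separated balls, the two-parameter small-ball estimate of Lemma \ref{l:det_cov} for $\mathbb{P}[|\mathfrak G(u)|<A,\ |\det D\mathfrak G(u)|<B]$, an area-comparison for the random set $\{|\mathfrak G|<A,\ |\det D\mathfrak G|<B\}$, Chebyshev, and finally the excision of $\xi$-small and $R_1$-long components. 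You replace all of this by a single weighted Kac--Rice computation plus Markov: since the conditional integrand is trivially bounded by $\beta_2\cdot p_{G(u)}(0)$, no conditional-density analysis of $\det DG$ is needed, and your non-degeneracy computation for $G(u)$ (stationarity gives $\mathrm{Cov}(\gfr_\alpha(u),\nabla\gfr_\alpha(u))=0$, isotropy gives $\operatorname{Var}(\tilde V_{x,T}\gfr_\alpha(u))=c_\alpha(a_1^2+a_2^2)\gtrsim |V(x)|^2$ for $T$ large) is exactly what is needed to make $p_{G(u)}(0)$ uniformly bounded. The quantifier order ($\beta_2$ depending only on $\eta,\delta,V$; $T_0$ depending on $R$ through the coefficient convergence) also comes out right.

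The one point you must address is the scope of what you have proved versus what Definition \ref{d:unstable_event} literally says and what the paper's proof actually delivers. As written, $\Delta_7$ concerns components possessing a point $u_0$ (on the component, hence with $\gfr_\alpha(u_0)=0$) at which the determinant is $\le\beta_2$, \emph{without} requiring $\tilde V_{x,T}\gfr_\alpha(u_0)=0$; and the paper's Lemma \ref{l:sub_tang_low_count} indeed concludes that $|\det D\mathfrak G|>\beta_{2,2}$ at \emph{every} point of all but $\eta R^2$ components. Your Kac--Rice count $N_{\beta_2}$ only sees the discrete set $\{G=0\}$, so it cannot control the (generically one-dimensional) set $\{\gfr_\alpha=0\}\cap\{|\det DG|\le\beta_2\}$, and a component can meet that set in arbitrarily short arcs; for that stronger statement one genuinely needs the paper's area/covering mechanism combined with the removal of small-volume components. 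Since the only downstream use of Proposition \ref{p:unstable_tangencies} in the proof of Proposition \ref{p:stab_k_even} is to bound $\nod_{\tilde V_{x,T}}^{\mbox{sub}-\beta}$, which is defined via sub-$\beta$ tangencies (joint zeros), your weaker conclusion suffices for the application --- but you should state explicitly that this is the version of $\Delta_7$ you are proving.
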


We would like to remind the reader of the following standard fact:
\begin{lem} \label{l:qual_bulinskaya}
For all $x \in \M \setminus \{V=0\}$, we have that the event
\begin{align*}
\Delta_8(V, x, T, R) &= \bigg\{ \mbox{there exists } u_0 \in B(R) \mbox{ such that } \mathfrak{g}_{\alpha}(u_0)=\left(\tilde{V}_{x,T} \, \mathfrak{g}_{\alpha} \right) (u_0) \\
&=  \det \left[\nabla \mathfrak{g}_{\alpha}(u_0),  \nabla \left( \tilde{V}_{x,T} \, \mathfrak{g}_{\alpha} \right)(u_0) \right] = 0 \bigg\}
\end{align*}
has probability 0.
\end{lem}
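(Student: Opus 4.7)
The plan is to obtain Lemma \ref{l:qual_bulinskaya} as a transversality statement of Bulinskaya type, applied to the $\R^{3}$-valued Gaussian map
\[
\Phi_{x,T}(u)\,:=\,\bigl(\mathfrak{g}_\alpha(u),\;(\tilde{V}_{x,T}\mathfrak{g}_\alpha)(u),\;\det\bigl[\nabla\mathfrak{g}_\alpha(u),\,\nabla(\tilde{V}_{x,T}\mathfrak{g}_\alpha)(u)\bigr]\bigr)
\]
on $u\in B(R)\subseteq\R^{2}$. Since the domain has dimension $2$ and the codomain has dimension $3$, the qualitative Bulinskaya lemma (\cite[Proposition 6.11]{AW}) produces $\mathbb{P}[\Phi_{x,T}^{-1}(0)\cap B(R)\ne\emptyset]=0$, which is precisely $\mathbb{P}[\Delta_{8}(V,x,T,R)]=0$, once we verify that (i) $\Phi_{x,T}$ is a.s.\ $C^{1}$ on $B(R)$, and (ii) for every fixed $u\in B(R)$, the law of $\Phi_{x,T}(u)$ admits a density on $\R^{3}$ that is bounded near the origin, with a bound that is integrable over $B(R)$ in $u$.

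Item (i) is immediate from the a.s.\ $C^{2}$-smoothness of $\mathfrak{g}_\alpha$ (assured by axiom $(\rho 2)$ for the scaling limit $\gfr_\alpha$) together with the smoothness of the blow-up coefficients $a_{1},a_{2}$ coming from the smoothness of $V$. For (ii), I would first discard the subevent $\{\exists\, u_{0}\in B(R):\,\mathfrak{g}_\alpha(u_{0})=0,\,\nabla\mathfrak{g}_\alpha(u_{0})=0\}$ by a direct application of Bulinskaya to the $\R^{3}$-valued map $(\mathfrak{g}_\alpha,\nabla\mathfrak{g}_\alpha)$, whose value at every point is a non-degenerate $3$-variate Gaussian (by axioms $(\rho 1)$--$(\rho 3)$). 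On the complementary event, any zero $u_{0}$ of $(\mathfrak{g}_\alpha,\tilde{V}_{x,T}\mathfrak{g}_\alpha)$ satisfies $\nabla \mathfrak{g}_\alpha(u_{0})=t\,(-a_{2},a_{1})$ for some $t\ne 0$, where $(a_{1},a_{2})=(a_{1}(\exp_{x}(u_{0}/T)),a_{2}(\exp_{x}(u_{0}/T)))\ne 0$ thanks to the hypothesis $x\notin\{V=0\}$ (and, for $T$ large, $\exp_{x}(u_{0}/T)$ stays in a neighborhood of $x$ on which $V$ is nonvanishing).

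Expanding the determinant in $\Phi_{x,T}$ and conditioning on $(\mathfrak{g}_\alpha(u_{0}),\nabla\mathfrak{g}_\alpha(u_{0}))$, a short calculation shows
\[
\det\bigl[\nabla\mathfrak{g}_\alpha,\nabla(\tilde{V}_{x,T}\mathfrak{g}_\alpha)\bigr](u_{0})\;=\;-t\bigl(a_{1}^{2}\partial_{11}\mathfrak{g}_\alpha+2a_{1}a_{2}\partial_{12}\mathfrak{g}_\alpha+a_{2}^{2}\partial_{22}\mathfrak{g}_\alpha\bigr)(u_{0})+t^{2}\,Q(a,\partial a),
\]
i.e.\ an affine function of the pure second derivatives $\partial^{2}_{ij}\mathfrak{g}_\alpha(u_{0})$ with coefficient vector $-t(a_{1}^{2},2a_{1}a_{2},a_{2}^{2})\ne 0$. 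The conditional Gaussian law of $(\partial^{2}_{ij}\mathfrak{g}_\alpha(u_{0}))$ given $(\mathfrak{g}_\alpha,\nabla\mathfrak{g}_\alpha)(u_{0})$ is non-degenerate (this is the standard joint non-degeneracy of the $2$-jet of $\gfr_\alpha$), so after the linear pushforward the conditional distribution of the determinant is non-degenerate Gaussian, hence has bounded density on $\R$. Combined with the bounded joint density of $(\mathfrak{g}_\alpha,\tilde{V}_{x,T}\mathfrak{g}_\alpha)(u_{0})$ at the origin (a $2$-variate non-degenerate Gaussian, using $(a_{1},a_{2})\ne 0$), this yields the required uniform bound $p_{\Phi_{x,T}(u)}(0)\le M<\infty$ for $u\in B(R)$.

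The remaining step is the covering argument: partition $B(R)$ into $O(\rho^{-2})$ balls of radius $\rho$, note that on the event $\{\|\Phi_{x,T}\|_{C^{1}(B(R))}\le L\}$, which has probability arbitrarily close to $1$ for $L$ large (Borel--TIS), the oscillation of $\Phi_{x,T}$ on any such ball is at most $L\rho$, so
\[
\mathbb{P}\bigl[\Delta_{8}\cap\{\|\Phi_{x,T}\|_{C^{1}}\le L\}\bigr]\;\le\;O(\rho^{-2})\cdot M\,(L\rho)^{3}\;=\;O(\rho)\xrightarrow[\rho\to 0^{+}]{}0,
\]
from which $\mathbb{P}[\Delta_{8}]=0$. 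The main technical obstacle is strictly the density bound (ii), and within it the non-degeneracy of the $2$-jet of $\gfr_\alpha$ under the constraint $t\ne 0$; the dimension-counting pushforward and covering argument are then routine.
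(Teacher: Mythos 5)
Your route is genuinely different from the paper's, and as written it has a gap at the step you yourself flag as the crux. The paper's proof is a one-liner: it applies Bulinskaya (\cite[Proposition 6.11]{AW}) in its \emph{regular-value} form ($d'=d=2$) to the $\R^{2}$-valued field $(\mathfrak{g}_{\alpha},\tilde{V}_{x,T}\mathfrak{g}_{\alpha})$, whose conclusion is literally that a.s.\ no zero of this map is a degenerate zero, i.e.\ the statement of the lemma; the only hypotheses to check are $C^1$ paths and a bounded density of the $2$-vector near the origin (non-degenerate $2$-variate Gaussian, using $\tilde V_{x,T}\neq 0$ on $B(R)$ for $T$ large). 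You instead invoke the $d'>d$ form on the augmented $3$-vector $\Phi_{x,T}$, which forces you to prove a bounded density for $\Phi_{x,T}(u)$ at the origin — and that claim is \emph{false}. By your own computation, conditionally on the $1$-jet with $\nabla\mathfrak{g}_{\alpha}(u_0)=t(-a_2,a_1)$, the determinant equals $-t\langle a,Ha\rangle+t^{2}Q$, so its conditional standard deviation is $\asymp|t|$ and its conditional density at $0$ is $\asymp 1/|t|$. Integrating this over the remaining gradient coordinate $t$ against the (bounded, non-vanishing) Gaussian density gives $\int_{|t|<1}|t|^{-1}\,dt=\infty$: the density of $\Phi_{x,T}(u)$ at the origin diverges logarithmically, so the hypothesis of the $d'>d$ Bulinskaya lemma fails and your final bound $M(L\rho)^{3}$ per ball is unjustified. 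The argument is repairable — the small-ball probability $\mathbb{P}[|\Phi_{x,T}(u)|\le\delta]$ is still $O(\delta^{3}\log(1/\delta))$, which beats the $o(\delta^{2})$ needed by your covering count — but you must run the covering argument on small-ball probabilities directly rather than through a (nonexistent) bounded density.

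Two secondary points. First, your appeal to "the standard joint non-degeneracy of the $2$-jet of $\gfr_{\alpha}$" is wrong for $\alpha=1$: there $\partial_{11}\mathfrak{g}_{\alpha}+\partial_{22}\mathfrak{g}_{\alpha}=-\mathfrak{g}_{\alpha}$, so the full $2$-jet is degenerate (the paper states this explicitly in the proof of Lemma \ref{l:det_cov}). What you actually need — non-degeneracy of the conditional law of the single linear functional $\langle a,Ha\rangle$ given the $1$-jet — does survive (on the slice $g=0$ it reduces to $(a_1^2-a_2^2)\partial_{11}\mathfrak{g}_{\alpha}+2a_1a_2\partial_{12}\mathfrak{g}_{\alpha}$ with nonzero coefficient vector, and $(\mathfrak{g}_{\alpha},\nabla\mathfrak{g}_{\alpha},\partial_{11}\mathfrak{g}_{\alpha},\partial_{12}\mathfrak{g}_{\alpha})$ is non-degenerate by \cite{CMW16}), but not for the reason you give. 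Second, $C^1$ regularity of $\Phi_{x,T}$ involves third derivatives of $\mathfrak{g}_{\alpha}$, so "$C^2$-smoothness from $(\rho 2)$" does not cover it; this is harmless only because $\gfr_{\alpha}$ has compactly supported spectral measure and hence a.s.\ analytic samples. Given all this, the paper's two-dimensional application is both simpler and avoids every one of these pitfalls; if you want to keep your augmented-map strategy, replace the density bound by the small-ball estimate and fix the $\alpha=1$ non-degeneracy justification.
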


\begin{proof}[Proof of Lemma \ref{l:qual_bulinskaya}]
This is an immediate application of Bulinskaya's lemma \cite[Proposition 6.11]{AW} on the random field $(\mathfrak{g}_{\alpha}, \tilde{V}_{x,T} \mathfrak{g}_{\alpha})$.
\end{proof}

Thanks to having both $\mathfrak{g}_{\alpha}^{-1}(0)$ and $( \tilde{V}_{x,T} \mathfrak{g}_{\alpha})^{-1}(0)$ (and the zero sets for the corresponding perturbations) being regular with probability 1, that the intersection between these two sets is always transversal by Lemma \ref{l:qual_bulinskaya}, and finally Lemma \ref{l:even_int}, we are able to focus only on events where the nodal components have an even number of tangencies to $\tilde{V}_{x,T}$ (although $k=0$ may occur and is addressed in the proof of Proposition \ref{p:stab_k_even}). Thus, we have reduce our analysis to that of $\beta$-transverse tangencies and sub-$\beta$ tangencies.

In closing this, we set
\begin{equation}
\label{eq:E good event def}
E = \cap_{i=1}^8 \Delta_i^{\complement}
\end{equation}
with the corresponding range of parameters so that $\mathbb{P}[E] > 1 - \delta$, where $\delta > 0$ is given. We will refer to this $E$ numerous times in upcoming sections.

\subsection{High probability stability estimates for local counts} \label{s:stab_est}

A $\mbox{sub}-\frac{3}{2} \beta$ tangency of $\mathfrak{g}_{\alpha}^{-1}(0)$ is mapped, under a small $C^{2}$ perturbation, to $\mbox{sub}-\left( \frac{3}{2}\beta + b \right)$ tangency of $f_{x,T}^{-1}(0)$, where $b$ is small (see Definition \ref{d:standard_events}).  We will later prove that components containing these near-degenerate tangencies are {\em few} in number, with high probability (see Proposition \ref{p:unstable_tangencies}).


\begin{prop}[Stability for the number of tangencies] \label{p:stab_k_even}
Let $V \in \mathcal{V}(\M)$ be given, and let $\tilde{V}_{x,T}$ denote the blow-up of $V$ at $x$. We have that for all $x \in \M \setminus \{V = 0\}$, $\delta>0$, $\eta >0$, and $k \ge 0$, there exists a $R_0(x, \delta, \eta, \|V(x)\|)$ such that for all $R \geq R_0$, there exists a $T_0(R, \|V(x)\|, \delta, k)$ such that for all $T \geq T_0$, we have the estimate
\begin{align}
\label{eq:perturb count}
 \nod_{\tilde{V}_{x,T}}(\mathfrak{g}_{\alpha}, k , R-1) - \eta R^2 & \leq \nod_{\tilde{V}_{x,T}}(f_{x,T}, k , R)
 \leq \nod_{\tilde{V}_{x,T}}(\mathfrak{g}_{\alpha}, k , R+1) +  \eta R^2,
\end{align}
with probability $> 1 - \delta.$
\end{prop}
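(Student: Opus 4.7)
The plan is to deduce \eqref{eq:perturb count} deterministically on the good event $E$ defined in \eqref{eq:E good event def}, using the quantitative deterministic stability result Proposition \ref{p:stab_trans} as the main engine, and absorbing the boundary effects of geometrically or tangentially unstable components into the $\eta R^2$ term via the small-probability controls of Definitions \ref{d:unif_stability_fields} and \ref{d:unstable_event}. Since $\mathbb{P}[E] > 1 - \delta$ once all parameters are properly tuned, this yields the stated conclusion. The order in which I would fix parameters is: first $\delta, \eta, k$; then invoke Lemma \ref{p:unstable_comp} and Proposition \ref{p:unstable_tangencies} to produce $\beta_{1,3} = \beta_{1,3}(\delta/8, \eta/4)$ and $\beta_{2} = \beta_{2}(\eta/4, \delta/8, V)$ so that at most $\eta R^2/2$ components of $\mathfrak{g}_\alpha^{-1}(0)\cap B(R+1)$ are either geometrically unstable or possess some sub-$\beta_2$ tangency; then invoke Proposition \ref{p:stab_trans} with these $\beta_1, \beta_2$ and the $C^2$ bounds from $\Delta_2^\complement \cap \Delta_3^\complement$ to obtain the perturbation threshold $b_0$; finally choose $R_0$ large enough to make all stability statements kick in, and $T_0 = T_0(R, \|V(x)\|, \delta, k)$ large enough so that $\|f_{x,T} - \mathfrak{g}_\alpha\|_{C^2(B(2R))} < b_0/2$ (via \eqref{eq:covar func band lim der conv}) and that $\tilde{V}_{x,T}$ is sufficiently close in $C^1(B(2R))$ to the constant field $V(x)$.

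On $E$ I would run the following two-sided comparison. For the lower bound, let $\Gamma$ be a component of $\mathfrak{g}_\alpha^{-1}(0)\cap B(R-1)$ with exactly $k$ $\tilde{V}_{x,T}$-tangencies. On $\Delta_6^\complement \cap \Delta_7^\complement$, all but at most $\eta R^2/2$ such $\Gamma$ are geometrically $\beta_{1,3}$-stable with all $k$ tangencies $\beta_2$-transverse. Proposition \ref{p:stab_trans} applied with $g = \mathfrak{g}_\alpha$, $h = f_{x,T}$ then yields an injective map into the components of $f_{x,T}^{-1}(0)\cap B(R)$ whose number of $(\beta_2 - b_0)$-transverse $\tilde{V}_{x,T}$-tangencies is precisely $k$; by Lemma \ref{l:qual_bulinskaya} (equivalently $\Delta_8^\complement$) a.s. every tangency is transverse anyway, so this matches the count $\nod_{\tilde{V}_{x,T}}(f_{x,T}, k, R)$. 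The lower bound follows. For the upper bound I would reverse the roles: because $\|f_{x,T} - \mathfrak{g}_\alpha\|_{C^2} < b_0$, the assumptions of Proposition \ref{p:stab_trans} transfer from $\mathfrak{g}_\alpha$ to $f_{x,T}$ with constants $\beta_1 - b_0, \beta_2 - b_0$, and a symmetric application embeds $k$-tangent components of $f_{x,T}$ in $B(R)$ into $k$-tangent components of $\mathfrak{g}_\alpha$ in $B(R+1)$, modulo the same $\eta R^2/2$ exceptional count. The shift $R \mapsto R \pm 1$ is supplied by estimate \eqref{e:components_bouge}: components move by $\mathcal{O}(b_0) \ll 1$, so none escape from $B(R-1)$ past $\partial B(R)$.

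The principal obstacle is the interplay between the $T$-dependent vector field $\tilde{V}_{x,T}$ and the deterministic results of Section \ref{s:local_results}, which are stated for a single non-vanishing $\tilde{V}$. The resolution, already foreshadowed after Definition \ref{d:blown_up_V}, is that for $x \notin \{V=0\}$ and $R$ fixed, the coefficients satisfy $a_i(\exp_x(Y/T)) = a_i(x) + \mathcal{O}(|Y|/T)$ uniformly on $B(2R)$, so $\tilde{V}_{x,T}$ converges in $C^1(B(2R))$ to the constant vector field $V(x)$ at rate $\mathcal{O}(R/T)$; writing $\tilde{V}_{x,T}\, f_{x,T} = V(x)\cdot \nabla f_{x,T} + \mathcal{O}(R/T)\cdot \|\nabla f_{x,T}\|$ absorbs this deviation into the additive perturbation $h - g$ to which Proposition \ref{p:stab_trans} is applied, with an $\mathcal{O}(M_1 R/T)$ contribution to $b_0$. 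No circularity arises because $R$ is chosen before $T \to \infty$. The strict positivity $\|V(x)\| > 0$ prevents the effective transversality constant $\beta_2$ from degenerating, which is precisely why Theorem \ref{t:conv_prob} and the present proposition are restricted to $x \in \M \setminus \{V = 0\}$; the contribution of a small neighbourhood of $\{V = 0\}$ will be handled separately in the global patching step outlined in \S\ref{sec:proofs outline}.
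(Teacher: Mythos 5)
Your proposal is correct and follows essentially the same route as the paper: it decomposes the $k$-tangency count into quantitatively $\beta$-transverse components (handled deterministically via Proposition \ref{p:stab_trans} on the good event $E$, which the paper packages as the intermediate Proposition \ref{p:b_trans_est}) and components with a sub-$\beta$ tangency (at most $\eta R^2$ by Proposition \ref{p:unstable_tangencies}), with the $T$-dependence of $\tilde{V}_{x,T}$ absorbed exactly as in Lemmas \ref{l:non_commuting} and \ref{l:close_vector_fields}. The only imprecision is your appeal to Lemma \ref{l:qual_bulinskaya} to ``match the count'': almost-sure transversality is qualitative and does not yield $\beta$-transversality for a fixed $\beta$, but this is immaterial since the inequality $\nod^{\beta\text{-trans}}_{\tilde V_{x,T}}(f_{x,T},k,R)\le \nod_{\tilde V_{x,T}}(f_{x,T},k,R)$ holds trivially and the sub-$\beta$ components are already controlled by your exclusion of $\Delta_7$.
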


Note that in Proposition \ref{p:stab_k_even} we are counting all $k$ tangencies, whether they are $\beta$-transversal or sub-$\beta$. This is the source of the extra terms of $\pm\eta R^{2}$ on both l.h.s. and r.h.s. of \eqref{eq:perturb count}, accounting for the sub-$\beta$, or rather near-degenerate, tangencies.


Let $V \in \mathcal{V}$. We will apply our deterministic intersection results from \S\ref{s:deterministic} to smooth Gaussian fields $G = (\mathfrak{g}_{\alpha}, \tilde{V}_{x,T}\mathfrak{g}_{\alpha})$, where $\tilde{V}_{x,T}$ is the blow-up of $V$ at $x \in \M$,  and $(f_{x,T}, \frac{1}{T}(Vf_T)_{x,T})$. It is important to note that there two advantages to considering $\frac{1}{T}(Vf_T)_{x,T}$. First $(\frac{1}{T} V)f_T$ and $ V(f_T)$ share the same zero sets and second, the zeroes of $\frac{1}{T}(Vf_T)_{x,T}$ and $\tilde{V}_{x,T}(f_{x,T})$ are close with respect to a natural perturbation parameter that appears when summoning certain coupling results (cf. \cite{SW18, So12} and Definition \ref{d:standard_events}).

\begin{lem}[Blowing up ``almost" commutes with differentiation] \label{l:non_commuting}
Given $x \in \M\setminus\{V=0\}$, and $R>0$, there exists $T_0(x,R,b)$ such that for all $T \geq T_0$, $\tilde{V}_{x,T}$ is non-vanishing in $B(R) \subset T_x\M$ and we have that
\begin{align*}
 T \tilde{V}_{x,T} \left( f_{x,T} \right)  = (Vf_T)_{x,T}+ \mathcal{O}_{\|f_{x,T}\|_{C^1}}\left( \frac{R}{T}\right).
\end{align*}
\end{lem}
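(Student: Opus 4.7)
The plan is to derive both claims from the chain rule, exploiting that $\exp_x \circ I_x$ is a smooth diffeomorphism mapping $0 \mapsto x$ with identity derivative at $0$, so that at scales $O(1/T)$ it is essentially the Euclidean identification. For the non-vanishing assertion, since $V(x) \neq 0$ and $V$ is continuous on $\M$, the coefficients $a_1, a_2$ of $V$ in any local chart around $x$ satisfy $\max\{|a_1(y)|, |a_2(y)|\} \ge c_V > 0$ throughout some neighbourhood $U$ of $x$. For $T$ large enough that $\exp_x(I_x(Y/T)) \in U$ for every $Y \in B(R)$ (a threshold $T_0 = T_0(x, R)$ suffices by continuity of $\exp_x$), the defining formula \eqref{eq:tild(V) def} shows that $\tilde{V}_{x,T}$ cannot vanish on $B(R)$.

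For the asymptotic identity, fix a local chart $(y_1, y_2)$ on $\M$ around $x$ compatible with $I_x$, and write $J(Z) = d(\exp_x \circ I_x)(Z)$. Smoothness and $J(0) = I$ give $J(Z) = I + \mathcal{O}(|Z|)$ for small $Z$, with $\mathcal{O}(|Z|^2)$ holding in geodesic normal coordinates. The chain rule applied to $f_{x,T}(Y) = f_T(\exp_x(I_x(Y/T)))$ yields
\begin{equation*}
\frac{\partial f_{x,T}}{\partial Y_i}(Y) = \frac{1}{T}\sum_{j=1}^{2} J(Y/T)_{ji} \cdot \frac{\partial f_T}{\partial y_j}\bigl(\exp_x(I_x(Y/T))\bigr).
\end{equation*}
Multiplying by $T\, a_i(\exp_x(I_x(Y/T)))$, summing on $i$, and isolating the $\delta_{ji}$ contribution (which reassembles $(Vf_T)_{x,T}(Y)$) leaves the error
\begin{equation*}
\sum_{i,j} a_i\bigl(\exp_x(I_x(Y/T))\bigr)\, \bigl[J(Y/T)_{ji} - \delta_{ji}\bigr] \cdot \frac{\partial f_T}{\partial y_j}\bigl(\exp_x(I_x(Y/T))\bigr).
\end{equation*}

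To bound this error, one uses $|a_i| = \mathcal{O}(1)$ near $x$ together with $|J(Y/T) - I| = \mathcal{O}(|Y|/T)$, and then inverts the same chain-rule identity to control $|\nabla f_T|$ at $\exp_x(I_x(Y/T))$ in terms of $\|f_{x,T}\|_{C^1(B(2R))}$: since $J(Y/T)^{-1}$ is uniformly bounded for $Y \in B(R)$ and $T$ large, one has $|\nabla f_T|\bigl(\exp_x(I_x(Y/T))\bigr) \le C \cdot T \cdot \|f_{x,T}\|_{C^1}$. Combining these bounds (and working in geodesic normal coordinates to extract a further factor of $|Y|/T$ from the Jacobian deviation) yields an error of the form advertised in the statement, uniformly for $Y \in B(R)$. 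The main obstacle is purely bookkeeping: tracking the powers of $T$ introduced by the rescaling $y \leftrightarrow Y/T$ (which inflates $|\nabla f_T|$ by a factor of $T$ relative to $|\nabla f_{x,T}|$) against the factor $T$ on the left-hand side. No substantive analytic difficulty arises; the lemma is a quantitative restatement of the chain rule combined with the fact that $\exp_x$ has identity derivative at $x$.
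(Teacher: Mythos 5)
Your argument is correct and follows the same route as the paper's one-line proof, namely Taylor's theorem/the chain rule combined with $D(\exp_x)_{Z=Y/T}=\mathrm{Id}+\mathcal{O}(R/T)$ for $|Y|\le R$, with the non-vanishing of $\tilde{V}_{x,T}$ obtained from $V(x)\neq 0$ and continuity exactly as intended. You are in fact more careful than the paper on the one genuine subtlety --- the factor of $T$ by which $\nabla_y f_T$ exceeds $\nabla_Y f_{x,T}$, which makes the naive first-order Jacobian bound yield only $\mathcal{O}(R)\|f_{x,T}\|_{C^1}$ --- and your appeal to the second-order accuracy of the exponential map in normal coordinates is a legitimate way to recover the stated $\mathcal{O}_{\|f_{x,T}\|_{C^1}}(R/T)$, up to an $R$-dependent constant that is harmless since $R$ is fixed before $T\to\infty$.
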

\begin{proof}
This is an immediate consequence of Taylor's theorem and the fact that for $T$ large enough, $R/T < \min \{inj(\M), 1 \}$ and that $D(\exp_x)_{Z = \frac{Y}{T}} = Id + \mathcal{O} \left( \frac{R}{T} \right)$ as $|Y|/T \leq R/T$.
\end{proof}

We need an intermediate estimate on the plane before proceeding to the proof of Theorem \ref{t:conv_prob}:
\begin{lem}[Close vector fields generate the same count] \label{l:close_vector_fields}
Let $\delta > 0$ be given.  Consider the parameter $\beta_2$ that controls degeneracy of our tangencies as described in \S\ref{s:exceptional_events}.  Let $0 < \epsilon \leq \beta_2$ be given. Suppose $\tilde{V}_1, \tilde{V}_2 \in \mathcal{V}(\R^2)$ are non-vanishing and that $\| \tilde{V}_1 - \tilde{V}_2 \|_{C^1(B(R))} < \epsilon$; that is, their coefficients with respect to the standard basis are $C^1$ close.
Then, for all $k \ge 0$, for all $\eta >0$, on the event $E$ as in \eqref{eq:E good event def} (of probability $>1-\delta$), there exists $R_0(\eta, \epsilon)$ such that for all $R \geq R_0$, the following inequality holds:
\begin{align*}
\nod_{\tilde{V}_1}(\mathfrak{g}_{\alpha}, k , R-1) - \eta R^2 & \leq \nod_{\tilde{V}_2}(\mathfrak{g}_{\alpha}, k , R) \leq  \nod_{\tilde{V}_1}(\mathfrak{g}_{\alpha}, k , R+1) + \eta R^2.
\end{align*}
\end{lem}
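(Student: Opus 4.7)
The plan is to treat the passage from $\tilde{V}_1$ to $\tilde{V}_2$ as an additive perturbation of the vector-valued map whose joint zeros encode $\tilde{V}$-tangencies, exploiting that $\mathfrak{g}_{\alpha}$, and with it the entire nodal set $\mathfrak{g}_{\alpha}^{-1}(0)$, does not move. Writing $G_i := (\mathfrak{g}_{\alpha}, \tilde{V}_i \mathfrak{g}_{\alpha})$ and
\begin{equation*}
\Psi := G_2 - G_1 = (0, (\tilde{V}_2 - \tilde{V}_1)\mathfrak{g}_{\alpha}),
\end{equation*}
the hypothesis $\|\tilde{V}_1 - \tilde{V}_2\|_{C^1(B(R))} < \epsilon$ together with $\|\mathfrak{g}_{\alpha}\|_{C^2(B(2R))} < M_0$ (from $\Delta_2^{\complement}$) yields $\|\Psi\|_{C^1(B(R))} \leq c_0 M_0 \epsilon$, which by shrinking $\epsilon$ can be pushed below any prescribed fraction of $\beta_2, \beta_{1,1}, \beta_{1,2}$.

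First I would discard the exceptional components of $\mathfrak{g}_{\alpha}^{-1}(0) \cap B(R+1)$: those containing a point with $|\nabla \mathfrak{g}_{\alpha}| \leq \beta_{1,3}$ (excluded by $\Delta_6^{\complement}$), and those containing a $\tilde{V}_1$-tangency with $|\det[\nabla \mathfrak{g}_{\alpha}, \nabla(\tilde{V}_1 \mathfrak{g}_{\alpha})]| \leq \beta_2$ (excluded by $\Delta_7^{\complement}$). At most $2\eta R^2$ components are lost, which is absorbed into the $\eta R^2$ slack of the conclusion. On each surviving ``good'' component $\gamma$, every $\tilde{V}_1$-tangency is $\beta_2$-transverse in the sense of Definition \ref{d:beta_trans}, and the hypotheses (1)--(3) of Proposition \ref{p:stab_trans} are in force via $\Delta_2^{\complement}, \Delta_4^{\complement}, \Delta_5^{\complement}, \Delta_7^{\complement}$. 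Applying Lemma \ref{l:Sodin_type_calc_lem} with base system $G_1$ and perturbation $\Psi$ produces, at each $\tilde{V}_1$-tangency $u_0 \in \gamma$, a unique zero $\tilde{u}_0 \in B(u_0, \mathcal{O}(\epsilon))$ of $G_2$ with $|\det DG_2(\tilde{u}_0)| > \beta_2/2$; since the first coordinate of $G_2$ equals $\mathfrak{g}_{\alpha}$, this $\tilde{u}_0$ lies on the same curve $\gamma$, providing an injection from $\tilde{V}_1$-tangencies on $\gamma$ into $\tilde{V}_2$-tangencies on $\gamma$.

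The main obstacle will be the reverse inequality, i.e.\ precluding a spurious $\tilde{V}_2$-tangency $v \in \gamma$ that is not $\mathcal{O}(\epsilon)$-close to any $\tilde{V}_1$-tangency. To handle it I would argue that, along the arc-length parameterization of $\gamma$, the restriction $h_1(s) := \tilde{V}_1 \mathfrak{g}_{\alpha}(\gamma(s))$ cannot take values of magnitude $\leq c_0 M_0 \epsilon$ outside $\mathcal{O}(\epsilon)$-neighbourhoods of its zeros. The inputs are the transversality $|h_1'(s_0)| \geq \beta_2/M_1$ at each zero $s_0$ of $h_1$ (coming from goodness of $\gamma$ via the identity $h_1'(s) = \det[\nabla \mathfrak{g}_{\alpha}, \nabla(\tilde{V}_1 \mathfrak{g}_{\alpha})]/|\nabla\mathfrak{g}_{\alpha}|$) together with the $C^2$ control $\|h_1''\|_{C^0} \lesssim M_0^2$ from $\Delta_2^{\complement}, \Delta_3^{\complement}$; a Rolle-plus-Taylor argument then forces consecutive zeros of $h_1$ to be separated by at least $c\beta_2/(M_1 M_0^2)$ and the intermediate extremum of $|h_1|$ to exceed $c\beta_2^2/(M_1^2 M_0^2)$, while near each zero the linear approximation confines points with $|h_1| \leq c_0 M_0 \epsilon$ to a $\mathcal{O}(\epsilon)$-neighbourhood of it. Since $\|h_1 - h_2\|_{C^0} \leq c_0 M_0 \epsilon$ with $h_2(s) := \tilde{V}_2 \mathfrak{g}_{\alpha}(\gamma(s))$, for $\epsilon$ below this threshold every zero of $h_2$ is accounted for by a unique nearby zero of $h_1$, giving the reverse injection and hence $T_{\tilde{V}_1}(\gamma) = T_{\tilde{V}_2}(\gamma)$ on every good $\gamma$.

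To close, I would take $R_0 = R_0(\eta, \epsilon)$ large enough that the $\mathcal{O}(\epsilon)$-movement of tangencies can reclassify at most $\eta R^2/2$ components with respect to the balls $B(R), B(R\pm 1)$, the latter being controlled by the boundary-crossing estimate of Lemma \ref{lem:Kac Rice Euclid}(2). Combined with the per-component equality $T_{\tilde{V}_1}(\gamma) = T_{\tilde{V}_2}(\gamma)$ on good components, this delivers the asserted sandwich
\begin{equation*}
\nod_{\tilde{V}_1}(\mathfrak{g}_{\alpha}, k, R-1) - \eta R^2 \leq \nod_{\tilde{V}_2}(\mathfrak{g}_{\alpha}, k, R) \leq \nod_{\tilde{V}_1}(\mathfrak{g}_{\alpha}, k, R+1) + \eta R^2.
\end{equation*}
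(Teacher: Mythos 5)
Your proposal is correct and follows essentially the same route as the paper: the paper's proof of Lemma \ref{l:close_vector_fields} simply reruns the proof of Proposition \ref{p:stab_k_even} with the coupling perturbation replaced by $\Psi=(0,(\tilde V_2-\tilde V_1)\mathfrak{g}_{\alpha})$ and $b<\epsilon$ in Lemma \ref{l:Sodin_type_calc_lem}, discarding the $\mathcal{O}(\eta R^2)$ components excluded by $\Delta_6,\Delta_7$ exactly as you do. Your one-dimensional Rolle/Taylor analysis along the curve for ruling out spurious $\tilde V_2$-tangencies is in fact more explicit than the paper, which delegates that step to Corollary \ref{l:Sodin_annuli} and the injectivity assertions of Proposition \ref{p:stab_trans}.
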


\begin{proof}[Proof of Lemma \ref{l:close_vector_fields}]
The proof follows almost every step of the proof of our main Proposition \ref{p:stab_k_even} given in \S\ref{sect:proof_main_stab_prop} except that we do not perturb by the vector function $( f_{x,T} - \mathfrak{g}_{\alpha}, \tilde{V}_{x,T}(f_{x, T}  - \mathfrak{g}_{\alpha}) )$ but by the simpler function $(0, (V_2-V_1) \mathfrak{g}_{\alpha})$. This type of perturbation is under the scope of the hypotheses of Lemma \ref{l:Sodin_type_calc_lem} where we set $b < \epsilon$ and $\Psi = (0, (V_2-V_1) \mathfrak{g}_{\alpha})$.
\end{proof}

\begin{proof}[Proof of Theorem \ref{t:conv_prob} assuming Proposition \ref{p:stab_k_even}, Lemma \ref{l:non_commuting} and Lemma \ref{l:close_vector_fields}]

Considering the listed proposition and lemmas, we let $\delta, \eta>0$ be given and $k\ge 0$. Now take the parameters as prescribed in \S\ref{s:exceptional_events}: that is, take $R_0, M_0, M_1, \beta_{1,1}, \beta_{1,2}, \beta_{1,3}, \beta_2$, and $T_0$.

We assume that
\begin{equation} \label{e:final_freq_stab}
R/T < \min \left\{1, \mbox{inj}(\M), \frac{1}{10 M_0}, \beta_{1,1}, \beta_{1,2}, \beta_{1,3}, \beta_2 \right\}
\end{equation}
after possibly taking the initial $T_0$ larger than is posed in \S\ref{s:exceptional_events}.  We know that $(Vf_T)_{x,T} = T (\tilde{V}_{x,T})(f_{x,T}) + \mathcal{O}_{\|f_{x,T}\|_{C^1}}\left( \frac{R}{T} \right)$ in the local coordinates w.r.t. $\exp_x$ by Lemma \ref{l:non_commuting}.  Let us define a new quantity $\nod \left(f_{x,T}, (Vf_T)_{x,T}, k,  R  \right)$ and set it equal to the number of components of $f_{x,T}^{-1}(0)$ contained inside of $B(R)$ whose number of points of intersection with $(Vf_T)_{x,T}^{-1}(0)$ is exactly $k$.  Recall that in Section 4, we established that understanding the number of tangencies was equivalent to understanding the number of intersections between two sets of curves.  From this, we see immediately that $\nod_{V}(f_T, x, k, \frac{R}{T})$ is bounded above and below by
\begin{equation*}
\nod \left(f_{x,T}, (Vf_T)_{x,T}, k,  R \pm 1  \right).
\end{equation*}

Now by our choice of $T_0$ as indicated after \eqref{e:final_freq_stab}, Lemma \ref{l:close_vector_fields}
(in particular, its proof which uses the equivalence between counting tangencies and counting intersections), and Corollary \ref{l:Sodin_annuli}, we have that $\nod \left(f_{x,T}, (Vf_T)_{x,T}, k,  R \pm 1  \right)$ itself is bounded above and below
by $\nod_{T\tilde{V}_{x,T}}\left( f_{x,T}, k,  R \pm 2  \right) \pm \eta R^2.$ Thus,
\begin{equation*}
 \nod_{T \, \tilde{V}_{x,T}}(f_{x,T}, k, R-2)  - \eta R^2 \leq \nod_{V}\left( f_T,x, k,  \frac{R}{T}  \right) \leq \nod_{T \, \tilde{V}_{x,T}}(f_{x,T}, k, R+2)   + \eta R^2.
\end{equation*}
Now, we emphasis that $\nod_{T \, \tilde{V}_{x,T}}(f_{x,T}, k, R \pm 2) =  \nod_{ \tilde{V}_{x,T}}(f_{x,T}, k, R \pm 2)$ since joint zeroes of $(f_{x,T}, T \, \tilde{V}_{x,T}(f_{x,T}))$ are the same as those of $(f_{x,T}, \tilde{V}_{x,T}(f_{x,T}))$, therefore giving us a one-to-one correspondence between components of $f_{x,T}^{-1}(0)$ with precisely $k$ zeroes of $\tilde{V}_{x,T}(f_{x,T})$ and those with precisely $k$ zeroes of $T\tilde{V}_{x,T}(f_{x,T})$.  This leads us to
\begin{equation*}
 \nod_{\tilde{V}_{x,T}}(f_{x,T}, k, R-2)  - \eta R^2 \leq \nod_{V}\left( f_T,x, k,  \frac{R}{T}  \right) \leq \nod_{ \tilde{V}_{x,T}}(f_{x,T}, k, R+2)   + \eta R^2.
\end{equation*}

In the local coordinates around $x$ given by $y=\exp_x(Y)$, we set $\tilde{V}(x)$ to be the constant vector field given by the trivial extension of $(\tilde{V}_{x,T})_{|Y=0}$ to $\R^2$. Proposition \ref{p:stab_k_even} yields us with probability $1 - \delta$ that there exists $R_1$ such that $R \geq R_1(x)$ and $T_1(R)$ such that $T \geq T_1$, along with an application of the vector field comparison statement Lemma \ref{l:close_vector_fields} in comparing $\tilde{V}_{x,T}$ to the constant field $\tilde{V}(x)$ by taking $T$ as in (\ref{e:final_freq_stab}), we have
\begin{align*}
 \nod_{\tilde{V}_{x,T}}(\mathfrak{g}_{\alpha}, k , R-1) - 2\eta R^2 & \leq \nod_{\tilde{V}(x)}(f_{x,T},  k , R)  \leq \nod_{\tilde{V}_{x,T}}(\mathfrak{g}_{\alpha}, k , R+1) + 2\eta R^2.
\end{align*}
Now, apply Lemma \ref{l:close_vector_fields} for the second time in the case of $f_{x,T}$, $V_1 = \tilde{V}_{x,T}$, $V_2 = \tilde{V}(x)$, and the same values of $R,T$ as in \eqref{e:final_freq_stab}, to obtain
\begin{equation*}
 \nod_{\tilde{V}(x)}(f_{x,T}, k, R-3)  - \eta R^2 \leq \nod_{V}\left( f_T,x, k,  \frac{R}{T}  \right) \leq \nod_{\tilde{V}(x)}(f_{x,T}, k, R+3)   + \eta R^2.
\end{equation*}

Let $\epsilon_1>0$ be given. By the above estimates, occurring with probability $> 1 - \delta$, this implies
\begin{align*}
\mathbb{P}\left[ \left| \frac{ \nod_{V}\left( f_T, x,  k , \frac{R}{T} \right)}{c_{2, \alpha} \pi R^2} - C_k \right| > \epsilon_1 \right] & \leq
\mathbb{P}\left[ \left| \frac{ \nod_{\tilde{V}(x)}(\mathfrak{g}_{\alpha}, k , R-4) }{c_{2, \alpha} \pi R^2} - C_k \right| > \epsilon_1 - \frac{4\eta}{c_{2, \alpha} \pi} \right] + \\
& \mathbb{P}\left[ \left| \frac{ \nod_{\tilde{V}(x)}(\mathfrak{g}_{\alpha}, k , R+4) }{c_{2, \alpha} \pi R^2} - C_k \right| > \epsilon_1 - \frac{4\eta}{c_{2, \alpha} \pi} \right] +  \mathcal{O}(\delta); \\
\end{align*}
note we have used that $R^2 = (R - 2)^2 + \mathcal{O}(R)$ in $R$ along with the Kac-Rice formula applied to critical points for $\mathfrak{g}_{\alpha}$ as in \cite[Corollary 2.3]{SW18} to show that
\begin{equation*}
\frac{ \nod_{\frac{\partial}{\partial Y_1}}(\mathfrak{g}_{\alpha}, k , R-4) }{c_{2, \alpha} \pi R^2}  = \frac{ \nod_{\frac{\partial}{\partial Y_1}}(\mathfrak{g}_{\alpha}, k , R-4) }{c_{2, \alpha} \pi (R-4)^2} + o(1)
\end{equation*}
for $R \geq R_3$.  To guarantee that $ \epsilon_1 - \frac{4\eta}{c_{2, \alpha} \pi} > \frac{\epsilon_1}{2}$, we choose $\eta$ possibly smaller.

Thanks to all of this, we have that for any radius parameter $R \geq R_4$, we can take $T \geq T_4$ (for $R_4$ and $T_4$ chosen possibly larger once again after invoking our Euclidean nodal count result Theorem \ref{thm:euclid_count}) to obtain the estimate
\begin{equation*}
\mathbb{P}\left[ \left| \frac{ \nod_{\tilde{V}(x)}(\mathfrak{g}_{\alpha}, k , R \pm 4) }{c_{2, \alpha} \pi (R \pm 4)^2} - C_k \right| > \frac{\epsilon_1}{2} \right] = o(1)
\end{equation*}
which in turn implies that
\begin{equation*}
\mathbb{P}\left[ \left| \frac{ \nod_{V}\left( f_T, x,  k , \frac{R}{T} \right)}{c_{2, \alpha} \pi R^2} - C_k \right| > \epsilon_1 \right] = o(1)
\end{equation*}
This concludes our proof.
\end{proof}

\subsection{Proof of Proposition \ref{p:stab_k_even}: stability estimate} \label{sect:proof_main_stab_prop}

In this section we always take $\tilde{V} \in \mathcal{V}(\R^2)$ to be nowhere vanishing.

\begin{defn}

\begin{enumerate}

\item We denote by $\nod_{\tilde{V}}^{\beta-\mbox{trans}}(\mathfrak{g}_{\alpha}, k, R)$ the number of connected components of
$\mathfrak{g}_{\alpha}^{-1}(0)$, strictly contained in $B(R)$, whose number of $\tilde{V}$-tangencies is precisely $k$ with each point of tangency being $\beta$-transverse (cf. Definition \ref{d:beta_trans}).

\item We denote by $\nod_{\tilde{V}}^{\mbox{sub}-\beta}(\mathfrak{g}_{\alpha}, k, R)$ the number of connected components of
$\mathfrak{g}_{\alpha}^{-1}(0)$, strictly contained in $B(R)$,  whose number of $\tilde{V}$-tangencies is precisely $k$ with at least one point of tangency being sub-$\beta$ (cf. Definition \ref{d:sub_beta}).

\end{enumerate}

\end{defn}

\begin{prop}[Stability of $\beta$-transversality] \label{p:b_trans_est}
Let $k \ge 0, \eta, \delta > 0$, and $x \in \M \setminus \{V = 0\}$ be given. Then there exists $\beta_0(x)$ and $R_0(V,x, \eta)>0$ such that for all $\beta < \beta_0$ and $R \geq R_0$, there exists $T_0(V,x, R, \beta, \eta)>0$ and $b(\beta, V)>0$ such that for all $T \geq T_0$, we have the estimate
\begin{align*}
\nod_{\tilde{V}_{x,T}}^{\frac{3}{2}\beta-\mbox{trans}}(\mathfrak{g}_{\alpha}, k, R-1) & \leq \nod_{\tilde{V}_{x,T}}^{\beta-\mbox{trans}}(f_{x,T}, k, R)
 \leq \nod_{\tilde{V}_{x,T}}^{\frac{1}{2}\beta-\mbox{trans}}(\mathfrak{g}_{\alpha}, k, R+1)
\end{align*}
for an event $E$, where $\mathbb{P} \left[ E \right] > 1 - \delta$ and $\|f_{x,T} - \mathfrak{g}_{\alpha} \|_{C^2(B(R))} < b$.
\end{prop}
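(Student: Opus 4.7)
The plan is to apply the deterministic stability Proposition \ref{p:stab_trans} twice on the good event $E=\bigcap_{i=1}^{8}\Delta_i^{\complement}$ of \eqref{eq:E good event def}: once with $(g,h)=(\mathfrak{g}_\alpha,f_{x,T})$ for the lower bound, and once with swapped roles for the upper bound. Before doing so, I calibrate the parameters of \S\ref{s:exceptional_events} so that (i) every $\Delta_i$ has probability at most $\delta/8$, giving $\Prob(E)>1-\delta$ by a union bound, and (ii) the hypotheses of Proposition \ref{p:stab_trans} are met on $E$. Because $x\notin\{V=0\}$ and $\tilde V_{x,T}(Y)=V(x)+\Oc(|Y|/T)$ by Lemma \ref{l:non_commuting}, the blown-up field is bounded below by $\tfrac12\|V(x)\|$ on $B(2R)$ for $T$ large, so the joint law of $(\mathfrak g_\alpha,\tilde V_{x,T}\mathfrak g_\alpha,\nabla\mathfrak g_\alpha,\nabla(\tilde V_{x,T}\mathfrak g_\alpha))$ is non-degenerate. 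First fix $\beta_{1,1},\beta_{1,2}>0$ small and $M_0,M_1>0$ large so that $\Prob(\Delta_2\cup\Delta_3\cup\Delta_4\cup\Delta_5)<\delta/2$ via Borel--TIS/Sudakov--Fernique together with Gaussian anti-concentration; then set $\beta_0(x):=\tfrac{1}{10}\min\{\beta_{1,1},\beta_{1,2}\}$. Given $\beta<\beta_0$, choose $b=b(\beta,V)<\tfrac{1}{10}\min\{\beta,b_0(M_0,\beta_{1,1},\beta_{1,2})\}$, where $b_0$ is the threshold of hypothesis (4) of Proposition \ref{p:stab_trans}, and finally pick $T_0$ large enough that $\Prob(\Delta_1)<\delta/8$ with the $C^2$-coupling on $B(2R)$ tighter than $b$, via the $C^2$-upgrade of \cite[Lemma 4]{So12}.

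For the left-hand inequality, I apply Proposition \ref{p:stab_trans} with $g=\mathfrak g_\alpha$, $h=f_{x,T}$, vector field $\tilde V_{x,T}$, transversality parameter $\tfrac32\beta$, and radius $R-1$. On $\Delta_4^{\complement}\cap\Delta_5^{\complement}$, hypothesis (1) holds with $\beta_1=\min\{\beta_{1,1},\beta_{1,2}\}\gg\beta$; hypothesis (2) holds with $\beta_2=\tfrac32\beta$ because we restrict attention to components whose tangencies are $\tfrac32\beta$-transverse; hypothesis (3) follows from $\Delta_2^{\complement}\cap\Delta_3^{\complement}$ after a routine $C^3$-upgrade of $\Delta_2,\Delta_3$ (of no additional probability cost, since the spectral measure of $\mathfrak g_\alpha$ has compact support and all moments are finite); hypothesis (4) is $\Delta_1^{\complement}$. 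Proposition \ref{p:stab_trans} produces an injection $\gamma\mapsto\gamma^h$ from components $\gamma\subset B(R-1)$ of $\mathfrak g_\alpha^{-1}(0)$ with precisely $k$ $\tfrac32\beta$-transverse tangencies into components $\gamma^h\subset B(R)$ of $f_{x,T}^{-1}(0)$ with precisely $k$ $(\tfrac32\beta-b)$-transverse tangencies; since $b<\beta/2$, these are in particular $\beta$-transverse, yielding the desired inequality.

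For the right-hand inequality, I reverse the roles by taking $g=f_{x,T}$, $h=\mathfrak g_\alpha$, transversality $\beta$, and radius $R$. The hypotheses of Proposition \ref{p:stab_trans} for $g=f_{x,T}$ follow from the corresponding ones for $\mathfrak g_\alpha$ on $E$ at the cost of the $C^2$-perturbation of size $b$: the min-max in hypothesis (1) drops from $\beta_{1,i}$ to $\beta_{1,i}-b(1+\|\tilde V_{x,T}\|_{C^1})\ge\tfrac12\beta_{1,i}$; any $\beta$-transverse tangency of $f_{x,T}$ is, via $C^2$-closeness, the image of a $(\beta-b)$-transverse tangency of $\mathfrak g_\alpha$ under the component map of Proposition \ref{p:stab_trans}, verifying hypothesis (2); hypothesis (3) is $\Delta_3^{\complement}$ (with its $C^3$-upgrade) and hypothesis (4) is unchanged. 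Proposition \ref{p:stab_trans} now delivers an injection from components of $f_{x,T}^{-1}(0)$ in $B(R)$ with precisely $k$ $\beta$-transverse tangencies into components of $\mathfrak g_\alpha^{-1}(0)$ in $B(R+1)$ with precisely $k$ $(\beta-b)$-transverse tangencies, which are $\tfrac12\beta$-transverse since $b<\beta/2$.

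The main obstacle is the parameter-nesting bookkeeping: the implicit constants and threshold $b_0$ in Proposition \ref{p:stab_trans} depend on $\beta_{1,i}$ and $M_0$, so $b=b(\beta,V)$ must be chosen only after those are fixed, while $\Prob(\Delta_1)$ requires $T_0$ large in terms of $(R,b)$. Once the nesting $(x,\delta)\mapsto(\beta_{1,i},M_i)\mapsto\beta_0\mapsto R_0\mapsto(b,T_0)$ is respected and the standard probability estimates of \S\ref{s:exceptional_events} are assembled, a union bound over $\Delta_1,\ldots,\Delta_5$ gives $\Prob(E)>1-\delta$, completing the proof.
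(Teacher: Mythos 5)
Your proposal is correct and follows essentially the same route as the paper: restrict to the good event $E=\bigcap_i\Delta_i^{\complement}$ of \S\ref{s:exceptional_events}, apply the deterministic stability Proposition \ref{p:stab_trans} together with Corollary \ref{l:Sodin_annuli} with a perturbation of $C^2$-size $b<\beta/2$, so that each $\tfrac32\beta$-transverse tangency survives as a $(\tfrac32\beta-b)$-transverse (hence $\beta$-transverse) one, and symmetrically for the upper bound. The paper disposes of the second inequality with ``follows similarly,'' whereas you spell out the role reversal and the parameter nesting explicitly; this is a presentational difference only.
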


\begin{proof}[Proof of Proposition \ref{p:stab_k_even} assuming Proposition \ref{p:b_trans_est} and Proposition \ref{p:unstable_tangencies}]
We prove the first inequality as the second inequality will be proved in an identical fashion. Let $\eta>0$ be given as in the referenced propositions, and assume that $E$ as in \eqref{eq:E good event def} occurs.
Since we have
\begin{align*}
\nod_{\tilde{V}_{x,T}}(\mathfrak{g}_{\alpha}, k, R-1) &= \nod_{\tilde{V}_{x,T}}^{\frac{3}{2}\beta-\mbox{trans}}(\mathfrak{g}_{\alpha}, k, R-1) + \nod_{\tilde{V}_{x,T}}^{\mbox{sub}-\frac{3}{2}\beta}(\mathfrak{g}_{\alpha}, k, R-1),
\end{align*}
Proposition \ref{p:unstable_tangencies} implies that for $\beta$ possibly smaller than $\beta_0(\eta) = \beta_2$ as initally given in \S\ref{s:exceptional_events}, that
\begin{align*}
\nod_{\tilde{V}_{x,T}}(\mathfrak{g}_{\alpha}, k, R-1) \leq \nod_{\tilde{V}_{x,T}}^{\frac{3}{2}\beta-\mbox{trans}}(\mathfrak{g}_{\alpha}, k, R-1) + \eta R^2
\end{align*}
for all $R \geq R_0$ large enough and $T \geq T_0$ with $T_0$ being large enough and depending on $R$ amongst other parameters.  We are now in a position to apply Proposition \ref{p:b_trans_est} and repeat this argument for $\nod_{\tilde{V}_{x,T}}(f_{x,T}, k, R)$; that is, we write
\begin{align*}
& \nod_{\tilde{V}_{x,T}}(\mathfrak{g}_{\alpha}, k, R-1) \leq \nod_{\tilde{V}_{x,T}}^{\beta-\mbox{trans}}(f_{x,T}, k, R) + \eta R^2 \\
& = \left( \nod_{\tilde{V}_{x,T}}(f_{x,T}, k, R) - \nod_{\tilde{V}_{x,T}}^{\mbox{sub}-\beta}(f_{x,T}, k, R) \right) + \eta R^2 \\
& \leq  \nod_{\tilde{V}_{x,T}}(f_{x,T}, k, R) + \eta R^2
\end{align*}
since $\nod_{\tilde{V}_{x,T}}^{\mbox{sub}-\beta}(f_{x,T}, k, R) \geq 0$. Note that for the upper bound of $ \nod_{\tilde{V}_{x,T}}(f_{x,T}, k, R)$, a similar argument follows.
\end{proof}

\begin{proof}[Proof of Proposition \ref{p:b_trans_est} assuming the results in \S\ref{s:exceptional_events} and \S\ref{s:local_results}]

Let $\eta, \delta>0$ be given and $\mathfrak{G} = (\mathfrak{g}_{\alpha}, \tilde{V}_{x,T}\mathfrak{g}_{\alpha})$. Now, consider the multi-parameter-dependent event $$E(x,R,T, \eta, \delta, \beta_{1,1}, \beta_{1,2}, \beta_{1,3}, \beta_2, b),$$ as in \eqref{eq:E good event def}.
Let $\beta_0 := \min \{  \beta_{1,1}, \beta_{1,2}, \beta_{1,3}, \beta_2\}$ and set $b < \frac{\beta_0}{4}$. We now satisfy the hypotheses of Proposition \ref{p:stab_trans}.  On the corresponding event $E$, we obtain some mapping properties of the components of $\mathfrak{g}_{\alpha}^{-1}(0)$ to the components of $f_{x,T}$ after perturbations of $C^1$ size $b$. More specifically, we know that after adding the perturbation $f_{x,T} - \mathfrak{g}_{\alpha}$ to $\mathfrak{g}_{\alpha}$, we have that each of the $\frac{3}{2} \beta_2$-transverse tangencies of $\mathfrak{g}_{\alpha}^{-1}(0)$ goes to a $ \left( \frac{3}{2}\beta_2 - b \right)$-transverse tangency of $f_{x,T}^{-1}(0)$. Thus, by Corollary \ref{l:Sodin_annuli} a component of $\mathfrak{g}_{\alpha}^{-1}(0)$ whose number of tangencies is precisely $k$ and each of which is $\frac{3}{2} \beta_2$-transverse is mapped to a component of $f_{x,T}^{-1}(0)$ whose number of tangencies is precisely $k$ and each of which is $(\frac{3}{2} \beta_2 - b)$- transverse. Thus, we have some form of injectivity and it follows that
\begin{equation*}
\nod_{\tilde{V}_{x,T}}^{(\frac{3}{2}\beta_2 )-\mbox{trans}}(\mathfrak{g}_{\alpha}, k, R-1) \leq \nod_{\tilde{V}_{x,T}}^{(\frac{3}{2}\beta_2 - b)-\mbox{trans}}(f_{x,T}, k, R).
\end{equation*}
The second inequality follows similarly.
\end{proof}

\subsection{Proof of Proposition \ref{p:unstable_tangencies}: high probability of few components with near-degenerate tangencies}
In this section, let $T>0$ and $\mathfrak{G}: \R^2 \rightarrow \R^2$ be the smooth Gaussian field
\begin{equation}
\label{eq:frakG def}
\mathfrak{G}(u) := \left( \mathfrak{g}_{\alpha}(u), \tilde{V}_{x,T}\mathfrak{g}_{\alpha}(u)\right).
\end{equation}
It is important to note that all the results in this section also hold for the Gaussian field $\mathfrak{F}(u) := \left( f_{x,T}(u), \tilde{V}_{x,T}f_{x,T}(u)\right)$ thanks to the exclusion of the $C^2$ decoupling event $\Delta_1$ as in \S\ref{s:exceptional_events}.

\begin{defn}
Let $\eta>0$, and $\beta_{2,2}>0$ be small, $R, R_1>0$ be large parameters such that $0< R_1 < R$ and $R/R_1$ is itself large. Let us cover $B(R)$ with approximately $(R/R_1)^2$ balls $\mathcal{D}_i$ of radius $R_1$ such that the multiplicity of the covering is bounded by a constant $\kappa>0$.  Denote by $\mathcal{G}_i$ the balls centred at the same points as $\mathcal{D}_i$ with radii $3R_1$. Note that the multiplicity of the covering $\{ \mathcal{G}_i \}_i$ is bounded by $c_0(2) \kappa$.

\begin{enumerate}
\item We say that the smooth random field $G=(g_1,g_2)$ is $( \beta_{2,2}, 3R_1)$-stable on a ball $\mathcal{G}_i$ if for all $u \in \mathcal{G}_i$, we
have $|g_1(u)| > \beta_{2,2}$ or
$|\det DG(u)| > \beta_{2,2}$; otherwise we say that
$G$ is $(\beta_{2,2})$-unstable on $\mathcal{G}_i$.
\item We say that $G$ is $(\eta, \beta_{2,2}, 3R_1)$-stable if $G$ is $(\beta_{2,2}, 3R_1)$-stable on all $\mathcal{G}_i$ except
for up to $\eta R^2$ ones.
\end{enumerate}
\end{defn}

\begin{prop} \label{p:abc_stability}
Let $x \in \M \setminus \{V=0\}$, and $R_{1}>0$. Given $\delta>0$, $\eta>0$, there exists a positive number $\beta_{2,2}(\delta,R_{1},\eta)$ so that for all $R>0$ with $R/R_1 > 100$, there exists a spectral parameter value $T_0(V, R, \beta_{2,2})$ such that for all $T \geq T_0$, any random field $\mathfrak{G}_{|B(R)}$ as in \eqref{eq:frakG def} is also $(\eta,\beta_{2,2}, 3R_1)$-stable
with probability $1 - \delta$.
\end{prop}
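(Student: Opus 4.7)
The plan is a Markov/Bulinskaya argument after reducing $\mathfrak{G}$ to a stationary Gaussian reference field on $\R^2$. I would proceed in three stages.

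\emph{Stage 1 --- reduction to a constant vector field.} For $T$ large, on $B(R) \subset T_x\M$ the blown-up vector field $\tilde V_{x,T}$ of Definition \ref{d:blown_up_V} is $C^1$-close to the constant vector field $\tilde V(x) := a_1(x)\partial_1 + a_2(x)\partial_2$ obtained by freezing the coefficients of $V$ at $x$; Taylor expansion gives $\|\tilde V_{x,T} h - \tilde V(x) h\|_{C^0(B(R))} = O(R/T)\|h\|_{C^2(B(R))}$. Applied to $h = \mathfrak{g}_\alpha$ together with its first derivatives, and restricted to the event $\Delta_2^\complement$ where $\|\mathfrak{g}_\alpha\|_{C^2(B(2R))} \leq M_0$, this shows that the stationary Gaussian reference field $\mathfrak{G}_0(u) := (\mathfrak{g}_\alpha(u), \tilde V(x)\mathfrak{g}_\alpha(u))$ satisfies $\|\mathfrak{G} - \mathfrak{G}_0\|_{C^0(B(R))} + \|\det D\mathfrak{G} - \det D\mathfrak{G}_0\|_{C^0(B(R))} = O(R/T)$. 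Taking $T_0$ so large that this discrepancy is below $\beta_{2,2}$, any ball that is $(\beta_{2,2},3R_1)$-unstable for $\mathfrak{G}$ becomes $(2\beta_{2,2},3R_1)$-unstable for $\mathfrak{G}_0$, so it suffices to control the reference field.

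\emph{Stage 2 --- a pointwise Bulinskaya bound for $\mathfrak{G}_0$.} Consider the smooth random map $\Psi_0 := (\mathfrak{g}_\alpha, \tilde V(x)\mathfrak{g}_\alpha, \det D\mathfrak{G}_0) : \R^2 \to \R^3$, and define
\[
p_0(\beta) := \mathbb{P}\bigl(\exists u \in \overline{B(0,3R_1)} :\ \max\{|\mathfrak{g}_\alpha(u)|,\,|\tilde V(x)\mathfrak{g}_\alpha(u)|,\,|\det D\mathfrak{G}_0(u)|\} \leq \beta\bigr).
\]
The hard step is verifying Bulinskaya's bounded-density hypothesis for $\Psi_0$ pointwise, since $\det D\mathfrak{G}_0$ is a homogeneous quadratic in Gaussian derivatives rather than Gaussian itself. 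I would handle this by conditioning on $\nabla\mathfrak{g}_\alpha(u)$ taking a generic nonzero value (a full-measure event by axiom $(\rho 3)$ and stationarity), so that $\det D\mathfrak{G}_0(u)$ becomes a nontrivial linear combination of the Gaussian second derivatives of $\mathfrak{g}_\alpha$ and therefore admits a bounded density near $0$; a tower/Fubini argument yields the unconditional density bound. Bulinskaya's lemma \cite[Proposition 6.11]{AW}, applicable because $3>2$, then gives $\Psi_0^{-1}(0)\cap\overline{B(0,3R_1)} = \emptyset$ almost surely, and monotone convergence yields $p_0(\beta) \to 0$ as $\beta \to 0$, at a rate depending only on $\rho$ and $R_1$.

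\emph{Stage 3 --- Markov union bound over the covering.} By stationarity of $\mathfrak{G}_0$, each ball $\mathcal{G}_i$ has the same instability probability $p_0(2\beta_{2,2})$. Markov's inequality yields
\[
\mathbb{P}\bigl(\#\{i : \mathcal G_i\text{ is unstable under }\mathfrak{G}_0\} > \eta R^2\bigr) \;\leq\; \frac{c_0(2)\kappa\,(R/R_1)^2\,p_0(2\beta_{2,2})}{\eta R^2} = \frac{c_0(2)\kappa\,p_0(2\beta_{2,2})}{\eta R_1^2}.
\]
Choosing $\beta_{2,2} = \beta_{2,2}(\delta,R_1,\eta)$ so small that the right-hand side is $\leq \delta/2$, and then $T_0 = T_0(V,R,\beta_{2,2})$ so large that the exceptional events $\Delta_1,\Delta_2$ together with the approximation discrepancy from Stage 1 contribute at most $\delta/2$, a union bound delivers $\mathbb{P}[\mathfrak{G}\text{ is not }(\eta,\beta_{2,2},3R_1)\text{-stable}] < \delta$, as required.
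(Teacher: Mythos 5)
Your route is genuinely different from the paper's. The paper argues by contradiction through an area comparison: if more than $\eta R^2$ balls were unstable, the separation lemma (Lemma \ref{l:4_separation}) and the derivative bound (Lemma \ref{l:derivative_bounds}) would force the set $\{|\mathfrak{G}|<A,\ |\det D\mathfrak{G}|<B\}$ to have area $\gtrsim \gamma^2 K$, while the anti-concentration estimate of Lemma \ref{l:det_cov} plus Chebyshev bounds that area from above, and a delicate choice of $\gamma,\beta_{2,2},s_1,s_2,T_0$ closes the contradiction. You instead bound the per-ball instability probability and apply a first moment plus Markov over the $\asymp (R/R_1)^2$ balls; since the bound $p_0/(\eta R_1^2)$ is uniform in $R$, this is structurally simpler and avoids the entire parameter bookkeeping of the paper's proof. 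Your Stage 1 reduction to the frozen field $\mathfrak{G}_0$ is also essentially what the paper does inside Lemma \ref{l:det_cov}. One definitional caveat: your $p_0(\beta)$ requires all three of $|\mathfrak{g}_\alpha|,|\tilde V(x)\mathfrak{g}_\alpha|,|\det D\mathfrak{G}_0|$ to be small, whereas Definition 6.10 as printed only requires $|g_1|$ and $|\det DG|$ small. Under the literal two-condition reading the limiting event $\{\exists u:\ g_1(u)=\det DG(u)=0\}$ has positive probability per ball (it is a $2$-versus-$2$ codimension condition), so your monotone convergence step would fail; but the paper's own proof also tacitly uses the three-condition version (it needs $|\mathfrak{G}(u_j)|\le\beta_{2,2}$, both components, at the witness point), and the three-condition notion suffices for the downstream Lemma \ref{l:sub_tang_low_count}, so your reading is the intended one.

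There is, however, a genuine gap in Stage 2. Bulinskaya's lemma for the map $\Psi_0:\R^2\to\R^3$ requires the density of $\Psi_0(u)$ to be bounded near the origin, and your claim that conditioning on $\nabla\mathfrak{g}_\alpha(u)$ followed by a ``tower/Fubini argument yields the unconditional density bound'' is false: conditioned on $\mathfrak{g}_\alpha(u)=\tilde V(x)\mathfrak{g}_\alpha(u)=0$ (say $\tilde V(x)=\partial_1$ after rotation), one has $\det D\mathfrak{G}_0(u)=-\partial_2\mathfrak{g}_\alpha(u)\cdot\partial_{11}\mathfrak{g}_\alpha(u)$, a product of two jointly Gaussian variables, whose density blows up (logarithmically) at $0$; integrating the conditional bound $C/|w|$ against the Gaussian law of $\partial_2\mathfrak{g}_\alpha$ diverges. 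So the hypothesis of \cite[Proposition 6.11]{AW} in the $3>2$ form is not met. Two fixes are available within the paper. Either invoke Lemma \ref{l:qual_bulinskaya} directly — the equi-dimensional Bulinskaya statement for $(\mathfrak{g}_\alpha,\tilde V\mathfrak{g}_\alpha):\R^2\to\R^2$, whose non-degeneracy hypothesis does hold — which says exactly that the limiting event $\{\exists u\in\overline{B(0,3R_1)}:\Psi_0(u)=0\}$ is null, and then conclude $p_0(\beta)\downarrow 0$ by compactness and continuity from above. Or use the quantitative anti-concentration of Lemma \ref{l:det_cov} with $A=B=\beta$, which gives $\mathbb{P}[|\Psi_0(u)|\le\beta]\lesssim\beta^{2+1/4}$, enough to run the standard covering proof of Bulinskaya and even extract a rate. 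With either repair, your argument goes through and yields the stated dependence $\beta_{2,2}(\delta,R_1,\eta)$ and $T_0(V,R,\beta_{2,2})$.
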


Note that there is an implicit dependence of all the parameters above on the radius $R_1$ for the elements in our covering. We will track this throughout our calculations. To prove this proposition, we quote the following two lemmas from \cite{SW18}:

\begin{lem}[Cf. {\cite[Lemma 4.9]{SW18}}] \label{l:4_separation}
There exists a constant $c_0 = c_0(\kappa) > 0$ depending only on $\kappa$ with the following property: for $\mathcal{G} = \{ \mathcal{G}_i \}_{i \leq K}$ a collection of radius-$3R_1$ balls lying in $B(R)$ such that each point $x \in B(R)$ lies in at most $\kappa$ elements of $\mathcal{G}$, we have that $\mathcal{G}$ contains at most $c_0 \, K$ balls that are in addition $4$-separated.
\end{lem}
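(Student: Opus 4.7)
The plan is to extract the $4$-separated subfamily from $\mathcal{G}$ by a standard greedy selection, so that the statement (which I read as asserting the existence of a $4$-separated subcollection of $\mathcal{G}$ of cardinality at least $c_0 K$, with the constant $c_0=c_0(\kappa)>0$ depending only on the multiplicity bound) becomes a purely combinatorial volume count. I take ``$4$-separated'' in the sense used throughout \cite{SW18}, namely that the centres of any two elements of the chosen subfamily are at distance at least $4 \cdot 3R_1 = 12 R_1$, so that the four-fold dilations of the selected balls are pairwise disjoint.

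The greedy procedure runs as follows. Order the balls as $\mathcal{G}_1,\ldots,\mathcal{G}_K$ arbitrarily, initialise $\mathcal{S}=\emptyset$, and process them sequentially: add $\mathcal{G}_i$ to $\mathcal{S}$ precisely when the centre $c_i$ of $\mathcal{G}_i$ lies at distance $\ge 12R_1$ from the centre of every ball already in $\mathcal{S}$. By construction the resulting family $\mathcal{S}$ is $4$-separated, so the only remaining task is to show $|\mathcal{S}|\geq c_0(\kappa)\cdot K$.

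The heart of the argument is the estimate bounding the number of indices ``charged'' to each inclusion. When $\mathcal{G}_i$ is inserted into $\mathcal{S}$, the collection $J_i \subseteq \{1,\ldots,K\}$ of indices that are either equal to $i$ or discarded because of $\mathcal{G}_i$ must satisfy: for every $j \in J_i$ the centre $c_j$ lies within distance $12R_1$ of $c_i$, and hence the ball $\mathcal{G}_j$ itself (radius $3R_1$) is contained in $B(c_i, 15R_1)$. Integrating the pointwise multiplicity bound $\sum_j \chi_{\mathcal{G}_j}\le \kappa$ over $B(c_i,15R_1)$ gives
\begin{equation*}
|J_i|\cdot \pi(3R_1)^2 \;=\; \sum_{j\in J_i}\area(\mathcal{G}_j) \;\le\; \int_{B(c_i,15R_1)}\kappa\,du \;=\; \kappa\,\pi(15R_1)^2,
\end{equation*}
so $|J_i|\le 25\kappa$. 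Since each index in $\{1,\ldots,K\}$ lies in the $J_i$ of exactly one chosen ball, it follows that $|\mathcal{S}|\ge K/(25\kappa)$, yielding the claim with $c_0(\kappa) = 1/(25\kappa)$, visibly independent of $R$ and $R_1$.

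There is no genuine analytic obstacle: the lemma is soft, purely about Euclidean geometry and volume. The only point requiring care is fixing the meaning of ``$4$-separated'' consistently with the conventions of \cite{SW18} (since the explicit numerical constants in $c_0(\kappa)$ depend on whether ``$4$-separated'' is read as centre-distance $\ge 12R_1$ or enlarged-ball disjointness at some other scale); any such choice changes only the factor $25$ above by an absolute multiplicative constant.
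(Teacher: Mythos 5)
Your proof is correct and is essentially the argument of \cite[Lemma 4.9]{SW18}, which the paper quotes without reproducing: a greedy extraction of the separated subfamily followed by a multiplicity/volume count showing each selected ball is charged at most $O(\kappa)$ of the original ones. You have also correctly read the statement's ``at most $c_0 K$'' as ``at least $c_0 K$'' (the only reading consistent with its use in the proof of Proposition \ref{p:abc_stability}), and your flagged ambiguity about the meaning of ``$4$-separated'' is harmless here since $R_1$ is taken large, so your stronger $12R_1$-separation implies the centre-distance-$4$ separation required by Lemma \ref{l:derivative_bounds}.
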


\begin{lem}[Cf. {\cite[Lemma 4.10]{SW18}}] \label{l:derivative_bounds}
Let $F$ be a stationary Gaussian random field. For all $\delta>0$ and for all $K, m \in \mathbb{N}$, there exists $C_0(\epsilon) >0$ such that for any (possibly random) collection of centers $\{ u_i \}_{i \leq K}$ satisfying $d(u_i, u_j) > 4$ for $i \neq j$, there exist $\lfloor K/2 \rfloor$ points $\{ u_{i_j} \}_{i_j \in I}$ with $|I| = \lfloor K/2 \rfloor$ such that
\begin{equation} \label{e:small_derivatives}
\sup\limits_{| \nu | \leq m, B(u_{i_j}, 1)} | \partial^{\nu} F(u) | \leq C_0 \frac{R}{\sqrt{K}}
\end{equation}
holds with probability $> 1 - \delta$.
\end{lem}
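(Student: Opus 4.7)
The key observation is that although the $K$ sup-norms in question are dependent random variables, the separation condition $d(u_i,u_j)>4$ forces the balls $B(u_i,2)$ to be pairwise disjoint. This will allow me to bound $\sum_{i} Y_i^{2}$ (with $Y_i$ being the sup in question) by a single integral over $B(R+2)$, whose expectation is controlled by stationarity alone. Markov's inequality will then complete the argument, with the $R/\sqrt{K}$ scaling emerging naturally from the fact that the RHS of the integral bound is $O(R^{2})$.

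\medskip

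In more detail, I set
\[
Y_i := \sup_{|\nu|\le m,\; v\in B(u_i,1)} |\partial^{\nu}F(v)|,
\]
and invoke Sobolev embedding $H^{s}\hookrightarrow L^{\infty}$ valid for $s>1$ in $\R^{2}$, which gives
\[
Y_i^{2} \;\le\; C_{s}\sum_{|\nu|\le m}\sum_{|\mu|\le s} \int_{B(u_i,2)} |\partial^{\nu+\mu}F(v)|^{2}\,dv.
\]
Since $d(u_i,u_j)>4$ makes the $B(u_i,2)$ disjoint, and (as the centers lie in $B(R)$, which is implicit from context) all contained in $B(R+2)$, summing over $i$ yields the pointwise bound
\[
\sum_{i=1}^{K} Y_i^{2} \;\le\; C\int_{B(R+2)}\sum_{|\eta|\le m+s}|\partial^{\eta}F(v)|^{2}\,dv.
\]
Taking expectation and using stationarity (each $\E|\partial^{\eta}F(v)|^{2}$ is a constant, namely a spectral moment of $F$, finite under the smoothness hypotheses on the spectral measure), I obtain
\[
\E\Big[\textstyle\sum_{i=1}^{K} Y_i^{2}\Big] \;\le\; C'(F,m)\,R^{2}.
\]

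\medskip

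The plan is then to set $L := C_0^{2}R^{2}/K$ and apply Markov to the count $N := \#\{i : Y_i^{2}>L\}$, noting that $N\le L^{-1}\sum_i Y_i^{2}$:
\[
\Prob\bigl[N \ge K/2\bigr]
\;\le\; \frac{\E[\sum_i Y_i^{2}]}{L\cdot K/2}
\;\le\; \frac{2\,C'(F,m)}{C_0^{2}}.
\]
Choosing $C_0=C_0(\delta):=\sqrt{2\,C'(F,m)/\delta}$ forces the right-hand side below $\delta$; on the complementary event, at least $\lfloor K/2\rfloor$ of the indices satisfy $Y_i\le C_0 R/\sqrt{K}$, which is exactly the claim.

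\medskip

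\textbf{Main obstacle.} The principal subtlety is that the centers $u_i$ are permitted to be random, and may very well be measurable with respect to $F$. This breaks the most obvious route, which would be per-index Markov plus stationarity and identical distribution of the $Y_i$'s. The above strategy side-steps this entirely because the bound $\sum_i Y_i^{2}\le C\int_{B(R+2)}(\cdots)\,dv$ is a \emph{deterministic} pathwise consequence of the separation hypothesis (the characteristic functions of the disjoint balls sum to at most the characteristic function of $B(R+2)$, and the Sobolev embedding is applied pointwise in $\omega$), and Fubini on the right-hand side converts the expectation into stationary spectral moments regardless of the joint law of the $u_i$'s with $F$. A secondary minor technicality is securing enough Sobolev regularity of $F$ to use the embedding at level $s>1$; this is automatic for the band-limited fields of interest (which are $C^{\infty}$ to all orders), and otherwise follows from axiom $(\rho 2)$ with a $p$ tailored to $m$.
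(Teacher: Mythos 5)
Your proof is correct: the pathwise Sobolev bound on disjoint balls, followed by stationarity and a Markov/counting argument, is exactly the standard route, and it handles the possibly random centers properly since the integral bound $\sum_i Y_i^2 \le C\int_{B(R+2)}\sum_{|\eta|\le m+2}|\partial^\eta F|^2$ holds pathwise and its right-hand side does not depend on the $u_i$. The paper itself gives no proof (the lemma is quoted from \cite[Lemma 4.10]{SW18}), and your argument is essentially the one used there, so there is nothing further to compare.
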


For the field $F = \mathfrak{g}_{\alpha}$, the constant $C_0$ in (\ref{e:small_derivatives}) depends on the quantity $\sum_{|\nu| \leq s_0} \E \left[  | \partial^{\nu} F(0) |  \right]$, thanks to our Gaussian process being stationary. For $F= \tilde{V}_{x,T} \mathfrak{g}_{\alpha}$,
Lemma \ref{l:derivative_bounds} is not directly applicable, since $F$ is no longer stationary. However, in what follows, we argue that the statement
\eqref{e:small_derivatives} of Lemma \ref{l:derivative_bounds} does hold with $F$, possibly increasing the $C_{0}$ in \eqref{e:small_derivatives}.
First, the derivatives of $F$
could be expressed in terms of the derivatives of $\gfr_{\alpha}$ and the derivatives of $\tilde{V}_{x,T}$.
Also, the vector field $\tilde{V}_{x,T}$ is asymptotically constant, as can be seen from Taylor expanding the coefficients $(a_1)_{x,T}$ and $(a_2)_{x,T}$ as in \eqref{eq:tild(V) def},
and using $$D(\exp_x)_{Z = \frac{Y}{T}} = Id + \mathcal{O} \left( \frac{R}{T} \right)$$ for $|Y|/T \leq R/T$.
Hence it follows that \eqref{e:small_derivatives} holds, with the constant $C_0$ in the r.h.s. of \eqref{e:small_derivatives} now depending on the derivatives of these coefficients of $V$, in Riemannian normal coordinates, evaluated at $0$.

\vspace{2mm}

In preparation for the proof of Proposition \ref{p:abc_stability}, we need an estimate on some volume quantities.

\begin{lem}[Volumes of neighborhoods of sub-$\beta$ tangencies] \label{l:det_cov}
Let $x \in \M \setminus \{V=0\}$, $A, B >0$ be small and $u \in B(R)$ be fixed.
Then there exists $C, \sigma_1, \sigma_2>0$ and $T_0(V,x)>0$ such that for all $T \geq T_0$,
\begin{align*}
& \mathbb{P} \left[ \mathfrak{g}_{\alpha} : |\mathfrak{G}(u)| < A, | \det D \mathfrak{G}(u)| < B \} \right] \leq \\
& C A^2 \cdot \sqrt{\mathcal{O}\left(\frac{R}{T}\right) A^{1 - \sigma_1} + \mathcal{O}\left(\frac{R^2}{T^2}\right)A^2 + \mathcal{O}\left(\frac{R^2}{T^2}\right)A^{-2\sigma_1} + A^{1 - \sigma_2} + B },
\end{align*}
with $\mathfrak{G}(\cdot)$ as in \eqref{eq:frakG def} (implicitly depending on $x$ and $T$),
and constants involved in the $``\Oc"$-notation depending on $\alpha$ and $V$ only.
\end{lem}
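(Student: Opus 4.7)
The plan is to decompose the joint probability by conditioning on the value of $\mathfrak{G}(u)$:
\[
\Prob\bigl[|\mathfrak{G}(u)|<A,\;|\det D\mathfrak{G}(u)|<B\bigr] \;=\; \int_{|w|<A} p_{\mathfrak{G}(u)}(w)\,\Prob\bigl[\,|\det D\mathfrak{G}(u)|<B \,\bigm|\, \mathfrak{G}(u)=w\,\bigr]\,dw.
\]
The volumetric first factor will produce the $CA^2$ prefactor, while the conditional factor, handled via polynomial anti-concentration, will contribute the square-root term.

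For the prefactor, the plan is to show that the Gaussian density $p_{\mathfrak{G}(u)}$ is uniformly bounded for $T$ sufficiently large. Stationarity of $\mathfrak{g}_\alpha$ reduces this to a single point. Taylor-expanding the coefficients of the blown-up vector field \eqref{eq:tild(V) def} around $x$ gives
\[
\tilde V_{x,T}\mathfrak{g}_\alpha(u) \;=\; V(x)\cdot\nabla \mathfrak{g}_\alpha(u) \;+\; \mathcal{O}(R/T)\,\|\nabla \mathfrak{g}_\alpha(u)\|,
\]
so the $2\times 2$ covariance matrix of $\mathfrak{G}(u)$ approaches, as $T\to\infty$, that of the pair $(\mathfrak{g}_\alpha(u),\,V(x)\cdot\nabla \mathfrak{g}_\alpha(u))$. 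Non-degeneracy of the limiting pair follows from $V(x)\ne 0$ together with the (block-diagonal) non-degeneracy of $(\mathfrak{g}_\alpha(0),\nabla \mathfrak{g}_\alpha(0))$, itself a consequence of axiom $(\rho 3)$ for the scaling limit. This yields $p_{\mathfrak{G}(u)}\le C$ uniformly in $T\ge T_0(V,x)$, and integration over the disc $|w|<A$ produces the $CA^2$ factor.

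For the conditional factor, I would differentiate the Taylor expansion once more to obtain
\[
\nabla\bigl(\tilde V_{x,T}\mathfrak{g}_\alpha\bigr)(u) \;=\; D^2\mathfrak{g}_\alpha(u)\,V(x) \;+\; \mathcal{O}(1/T)\,\nabla \mathfrak{g}_\alpha(u) \;+\; \mathcal{O}(R/T)\,D^2\mathfrak{g}_\alpha(u),
\]
so that
\[
\det D\mathfrak{G}(u) \;=\; \det\bigl[\nabla\mathfrak{g}_\alpha(u),\,D^2\mathfrak{g}_\alpha(u)V(x)\bigr] \;+\; \mathcal E(u),
\]
with principal term a degree-$2$ Gaussian polynomial in $(\nabla \mathfrak{g}_\alpha(u),D^2\mathfrak{g}_\alpha(u))$ and a perturbation $\mathcal E(u)$ carrying the $\mathcal O(R/T)$ and $\mathcal O(R^2/T^2)$ factors. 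Conditioning further on $\nabla \mathfrak{g}_\alpha(u)$ renders the principal term linear in the Hessian entries with coefficient of size $\|\nabla\mathfrak{g}_\alpha(u)\|$, so elementary Gaussian anti-concentration gives a conditional bound of order $B/\|\nabla \mathfrak{g}_\alpha(u)\|$. Combined with the Carbery--Wright estimate for the degree-$2$ polynomial (whose exponent $1/d = 1/2$ is the source of the outer square root) and the Gaussian small-ball bound $\Prob[\|\nabla\mathfrak{g}_\alpha(u)\|\le t]\lesssim t^2$, the conditional probability is controlled by the square root of the five-term sum: the term $B$ is the direct anti-concentration; the $A^{1-\sigma_2}$ and $A^{1-\sigma_1}$ come from the small-ball on $\|\nabla \mathfrak{g}_\alpha(u)\|$ once the $V$-directional component of $\nabla \mathfrak{g}_\alpha(u)$ has already been constrained by $|\mathfrak{G}(u)|<A$; and the $\mathcal O(R/T)$- and $\mathcal O(R^2/T^2)$-weighted terms are contributed by $\mathcal E(u)$.

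The main technical obstacle I anticipate is executing a dyadic decomposition over shells $\{2^{-(n+1)}<\|\nabla \mathfrak{g}_\alpha(u)\|\le 2^{-n}\}$ in order to balance the negative power $A^{-2\sigma_1}$ against the positive powers $A^{1-\sigma_i}$. On shells where the gradient is atypically small, the perturbation $\mathcal E(u)$ may dominate the principal term in $\det D\mathfrak{G}$, forcing us to divide the $R/T$ factors by a power of $\|\nabla \mathfrak{g}_\alpha(u)\|$ in the anti-concentration step, which is precisely the origin of the $A^{-2\sigma_1}$ term. A careful choice of $\sigma_1,\sigma_2>0$ small enough, together with a union bound over the dyadic shells, will consolidate the various contributions into the stated five-term estimate under the square root, and multiplication by the $CA^2$ prefactor will complete the proof.
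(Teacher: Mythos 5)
Your first half coincides with the paper's proof: the $CA^{2}$ prefactor is obtained exactly as you describe, by Taylor-expanding the coefficients of $\tilde V_{x,T}$ (the paper first rotates coordinates so that $\tilde V_{x,T}=(1+a_{1})\partial_{1}+a_{2}\partial_{2}$ with $a_{i}=\mathcal{O}(R/T)$, $a_{i}(0)=0$), using stationarity to reduce to $u=0$, and using the non-degeneracy of $(\mathfrak{g}_{\alpha}(0),\partial_{1}\mathfrak{g}_{\alpha}(0))$ for $T\ge T_{0}(V,x)$. The second half is where you diverge, and where your sketch has a genuine gap. The paper does not use Carbery--Wright or a dyadic decomposition in $\|\nabla\mathfrak{g}_{\alpha}\|$. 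Instead, after the rotation the conditioning event already forces $|\partial_{1}\mathfrak{g}_{\alpha}(0)|<A$, so the principal part of the determinant is $\partial_{2}\mathfrak{g}_{\alpha}\cdot\partial_{1,1}\mathfrak{g}_{\alpha}-\partial_{1}\mathfrak{g}_{\alpha}\cdot\partial_{2,1}\mathfrak{g}_{\alpha}$; the terms $A^{1-\sigma_{2}}$ and $A^{-2\sigma_{1}}$, $\mathcal{O}(R/T)A^{1-\sigma_{1}}$ come from \emph{upper} tail bounds $\Prob[|\partial_{2}\mathfrak{g}_{\alpha}|>A^{-\sigma_{1}}\mid\cdot]\le Ce^{-c/A^{2\sigma_{1}}}$ and $\Prob[|\partial_{2,1}\mathfrak{g}_{\alpha}|>A^{-\sigma_{2}}\mid\cdot]\le Ce^{-c/A^{2\sigma_{2}}}$ combined with $|\partial_{1}\mathfrak{g}_{\alpha}|<A$ and with the $\mathcal{O}(R^{2}/T^{2})$ error terms --- not, as you assert, from a small-ball estimate on $\|\nabla\mathfrak{g}_{\alpha}\|$ or from dividing $R/T$ by powers of the gradient on bad shells. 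The square root then comes from the elementary observation that if $|\partial_{2}\mathfrak{g}_{\alpha}\cdot\partial_{1,1}\mathfrak{g}_{\alpha}|\le s$ then one of the two factors is $\le\sqrt{s}$, together with the boundedness of the relevant conditional densities. Your route (condition on the gradient, get $B/\|\nabla\mathfrak{g}_{\alpha}\|$, sum over dyadic shells) is not implausible, but you explicitly defer its execution as ``the main technical obstacle,'' and it is not clear it reproduces the stated five-term bound rather than some other estimate; as written the proof is incomplete precisely at the step that constitutes the lemma.

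A second, more specific gap: every anti-concentration statement you invoke for the Hessian entries conditioned on $(\mathfrak{g}_{\alpha},\nabla\mathfrak{g}_{\alpha})$ requires non-degeneracy of the corresponding conditional covariance. For $\alpha=1$ the full vector $(\mathfrak{g}_{\alpha},\nabla\mathfrak{g}_{\alpha},D^{2}\mathfrak{g}_{\alpha})$ is degenerate (since $\Delta\mathfrak{g}_{1}=-\mathfrak{g}_{1}$), which is why the paper works with the reduced field $H=(\mathfrak{g}_{\alpha},\nabla\mathfrak{g}_{\alpha},\partial_{1,2}\mathfrak{g}_{\alpha},\partial_{1,1}\mathfrak{g}_{\alpha})$, omitting $\partial_{2,2}\mathfrak{g}_{\alpha}$, and cites the explicit computation in \cite[Appendix 1]{CMW16} for its non-degeneracy. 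Your proposal works with the full Hessian and never addresses this; any completed version of your argument must isolate which second derivatives actually enter and verify the non-degeneracy of their conditional law, exactly as the paper does.
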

\begin{proof}
Consider the smooth Gaussian field
\begin{equation*}
H:=\left( \mathfrak{g}_{\alpha}, \nabla \mathfrak{g}_{\alpha}, \partial_{1,2} \mathfrak{g}_{\alpha}, \partial_{1,1} \mathfrak{g}_{\alpha} \right).
\end{equation*}
The distribution of $H(u)$ is non-degenerate Gaussian for every $u\in\R^{2}$ and $\alpha\in [0,1]$ (for $\alpha<1$ it follows from the axiom
$(\rho 4^{*})$, whereas for the most subtle $\alpha=1$ the non-degeneracy of the distribution was shown in \cite[Appendix 1]{CMW16} via an explicit computation; it is no longer non-degenerate if $\partial_{2,2}\mathfrak{g}_{\alpha}$ is added to the vector). The field $H$ proves useful for the following reasons. By rotating the plane $\R^{2} \simeq T_x \M$
if necessary (and using the rotation invariance of the law of $\mathfrak{g}_{\alpha}(\cdot) $), we may assume that
\begin{equation*}
\tilde{V}_{x,T} = (1 + a_1(u)) \partial_1 + a_2(u) \partial_2
\end{equation*}
where $a_1=a_2= \mathcal{O}(R/T)$ and have the additional property of $a_1(0)=a_2(0)=0$. Thus, thanks to the stationarity of $\mathfrak{g}_{\alpha}$ (and hence of $\mathfrak{G}$), we have
\begin{align*}
& \mathbb{P} \left[ \{ |\mathfrak{G}(u)| < A, | \det D \mathfrak{G}(u)| < B \} \right] =
\mathbb{P} \left[ \{ |\mathfrak{G}(0)| < A, | \det D \mathfrak{G}(0)| < B \} \right] \\&=
 \mathbb{P} \left[ \{ |(\mathfrak{g}_{\alpha}(0), \partial_1 \mathfrak{g}_{\alpha}(0))| < A, |\det D \mathfrak{G}(0)| < B \} \right]
\end{align*}
where
\begin{align*}
\det D \mathfrak{G}(0) & = \partial_2 \mathfrak{g}_{\alpha} \cdot \partial_{1,1} \mathfrak{g}_{\alpha} - \partial_{1} \mathfrak{g}_{\alpha} \cdot \partial_{2,1} \mathfrak{g}_{\alpha} + \left( \partial_1 a_1 - \partial_2 a_2 \right) \cdot (\partial_1 \mathfrak{g}_{\alpha}) \cdot (\partial_2 \mathfrak{g}_{\alpha}) \\ & + \partial_1 a_2 \cdot (\partial_2 \mathfrak{g}_{\alpha})^2 - \partial_2 a_1 \cdot (\partial_1 \mathfrak{g}_{\alpha})^2_{|u=0}.
\end{align*}

It follows that,
for some absolute constant $c>0$,
\begin{align} \label{e:ub_vol_bound}
\nonumber & \mathbb{P} \left[ \{  |\mathfrak{G}(u)| < A, | \det D \mathfrak{G}(u)| < B \} \right]
 \leq cA^2 \cdot \mathbb{P} \left[ \{ | \det D \mathfrak{G}(0)| < B \, \bigg| \, \left| \left( \mathfrak{g}_{\alpha}(0), \partial_1 \mathfrak{g}_{\alpha}(0) \right) \right| < A  \} \right] \\
& \leq cA^2 \cdot \mathbb{P} \left[ \{ | \partial_2 \mathfrak{g}_{\alpha} \cdot \partial_{1,1} \mathfrak{g}_{\alpha} - \partial_{1} \mathfrak{g}_{\alpha} \cdot \partial_{2,1} \mathfrak{g}_{\alpha} | < B + F(u) \, \bigg| \, \left| \left( \mathfrak{g}_{\alpha}(0), \partial_1 \mathfrak{g}_{\alpha}(0) \right) \right| < A  \} \right],
\end{align}
where $F(u) = \mathcal{O}\left(\frac{R^2}{T^2}\right) \cdot A \cdot \partial_2 \mathfrak{g}_{\alpha} + \mathcal{O}\left(\frac{R^2}{T^2}\right) \cdot (\partial_2 \mathfrak{g}_{\alpha})^2 + \mathcal{O}\left(\frac{R^2}{T^2}\right) \cdot A^2$ is an estimate on the remaining terms appearing the expression for $\det D \mathfrak{G}(u)$. Note that given $\sigma_1, \sigma_2 > 0$, there exist constants $C_1,C_2,c_1,c_2 >0$, independent of $A$, such that $\mathbb{P} \left[ \bigg| \partial_2 \mathfrak{g}_{\alpha}(0)| > A^{-\sigma_1} \bigg| |\mathfrak{G}(0)| < A \right] \leq C_1 e^{-\frac{c_1}{A^{2\sigma_1}}}$ and $\mathbb{P} \left[ | \partial_{2,1} \mathfrak{g}_{\alpha}(0)| > A^{-\sigma_2} \bigg| |\mathfrak{G}(0)| < A\right] \leq C_2 e^{-\frac{c_2}{A^{2\sigma_2}}}$ thanks to the non-degeneracy of our conditional covariance matrices which follows directly from the non-degeneracy of that of $\tilde{\mathfrak{G}}$.

Let us combine the right-hand side of \eqref{e:ub_vol_bound} with our estimate on $F(u)$, the estimates for the probabilities $\mathbb{P} \left[ \bigg| \partial_2 \mathfrak{g}_{\alpha}(0)| > A^{-\sigma_1} \bigg| |\mathfrak{G}(0)| < A \right]$ and $\mathbb{P} \left[ | \partial_{2,1} \mathfrak{g}_{\alpha}(0)| > A^{-\sigma_2} \bigg| |\mathfrak{G}(0)| < A\right]$, and the explicit quantity of $\mathbb{P} \bigg[ \left| \left( \mathfrak{g}_{\alpha}(0), \partial_1 \mathfrak{g}_{\alpha}(0) \right) \right| < A \bigg]$.  Thus we have reduced to calculating
\begin{align*}
\mathbb{P} \bigg[ | \partial_2 \mathfrak{g}_{\alpha} \cdot \partial_{1,1} \mathfrak{g}_{\alpha} | & \leq \mathcal{O}\left(\frac{R^2}{T^2}\right) A^{1 - \sigma_1} + \mathcal{O}\left(\frac{R^2}{T^2}\right)A^2 + \mathcal{O}\left(\frac{R^2}{T^2}\right)A^{-2\sigma_1} + A^{1 - \sigma_2} + B  \\
& \bigg| \, \left| \left( \mathfrak{g}_{\alpha}(0), \partial_1 \mathfrak{g}_{\alpha}(0) \right) \right| < A \bigg].
\end{align*}
The fact that at least one of the terms $\partial_2 \mathfrak{g}_{\alpha}, \partial_{1,1} \mathfrak{g}_{\alpha}$ must be bounded above by the squareroot of the right-hand side along with the non-degeneracy of the corresponding conditional covariance matrix gives the desired upperbound of
\begin{equation*}
2 \sqrt{\mathcal{O}\left(\frac{R^2}{T^2}\right) A^{1 - \sigma_1} + \mathcal{O}\left(\frac{R^2}{T^2}\right)A^2 + \mathcal{O}\left(\frac{R^2}{T^2}\right)A^{-2\sigma_1} + A^{1 - \sigma_2} + B }.
\end{equation*}
Now, set $\sigma_1=\sigma_2=1/2$ (although this may not be an optimal choice of parameters).
\end{proof}

\begin{proof}[Proof of Proposition \ref{p:abc_stability}]

Our proof follows the proof of \cite[Proposition 4.7]{SW18} with some important modifications of the argument. The main idea is to show that if the count for critical points of $\mathfrak{G}$, that are also low-lying in the sense that the first component of $\mathfrak{G}$ is small, is too large, then for sufficiently small parameters $\beta_{2,2}$, the smoothness of the field $\mathfrak{G}$ allows us to derive contradictions in volume comparisons for neighborhoods of such critical points.

First, we restrict ourselves to the event $\left( \cap_{i=1}^6 \Delta_i^{\complement}\right)$ where $R \geq R_0(\eta,\delta)$, $\beta_{1,3}=\beta_{1,3}(\eta,\delta)$; the parameters governing the events $\Delta_i$, $i=1,...,5$, have been chosen such that $\cap_{i=1}^5 \Delta_i^{\complement}$ has probability $> 1 - \frac{\delta}{8}$. Note that $\mathbb{P}[\Delta_6(\eta, R, \beta_{1,3})^{\complement}] > 1 - \frac{\delta}{8}$. We are now in the situation where the nodal components of $\mathfrak{g}_{\alpha}$ are stable in the sense of Definition \ref{d:unif_stability_fields}.
For a parameter $\beta_{2,2}$, we let $K$ be the number of $\beta_{2,2}$-unstable balls for $\mathfrak{G}$. We will eventually be able to find a number $\beta_{2,2}$ that satisfies the conclusion of Proposition \ref{p:unstable_tangencies}.

Next, we introduce a small parameter $\gamma = \gamma(R_1)$ that we will specify towards the end of our proof. We set
\begin{equation}
\label{eq:Ktild=K}
\tilde{K} = c_1 K
\end{equation}
after invoking the separation statement Lemma \ref{l:4_separation}, and consider the new associated covering $\{ \mathcal{G}_{i_j} \}_{i_j \leq \tilde{K}}$, with centers $u_{i_j}$, where we have possibly reordered the indices $i_j \leq \tilde{K}$. Now Taylor expand $\det D \mathfrak{G}$ to obtain the bound
\begin{equation} \label{e:DG_expansion}
|\det D \mathfrak{G}(u)| < \beta_{2,2} + C_2 \bigg( \sup\limits_{|\nu| = 1, |\tau|=1 ,\, x \in B(u_{i_j}, \gamma)} |\partial^{\nu}  \tilde{V}_{x,T}\mathfrak{g}_{\alpha}(u) | \, |\partial^{\tau}  \tilde{V}_{x,T}\mathfrak{g}_{\alpha}(u)| \bigg) \cdot \gamma
\end{equation}
on each ball $B(u_{i_j}, \gamma)$.
As a result of an application of Lemma \ref{l:derivative_bounds} on \eqref{e:DG_expansion}, we obtain the numbers
\begin{align*}
A &:=  \beta_{2,2} + c_2 \frac{R}{\tilde{\sqrt{K}}}\cdot \gamma \\
B &:= \beta_{2,2} + c_3^2 \frac{R^2}{\tilde{K}} \gamma
\end{align*}
with $c_{2}, c_3$ absolute, so that with probability $1 - \delta/4$ for $R \geq R_1$, the bounds $|\mathfrak{G}(u)| < A$ and $|\det D \mathfrak{G}(u)| < B$ hold on at least half of the $B(u_j, \gamma)$, assuming $R \geq R_1$
(though we may neglect the distinction between ``all of the balls" and ``half of them" by appropriately modifying
the constant in \eqref{eq:Ktild=K}).

Our third, and most important, step is to manipulate the random variable
\begin{equation*}
\mathcal{A}_{A,B}  = \mbox{Area} \left( \{ u:  |\mathfrak{G}(u)|<A, |\det D\mathfrak{G}(u)| < B \} \right)
\end{equation*}
in order to obtain a sufficient estimate on $K$ and deduce some necessary restrictions on $\beta_{2,2}$.
We now have with probability $> 1 - \delta/4$, following immediately from the assumptions on the balls $B(u_j, \gamma)$ and that $\tilde{K} = c_1K$, that there exists $c_3 > 0$ uniform such that the following bound holds:
\begin{equation}
\label{eq:Ac(A,B)>=cgamma^2}
\mathcal{A}_{A,B} \geq c_3 \gamma^2 \cdot K.
\end{equation}

We have
\begin{equation*}
\mathcal{A}_{A,B} = \int\limits_{B(R)} \chi_{A,B} \left( g_1,g_2, \frac{\partial g_1}{\partial x_1}  \frac{\partial g_2}{\partial x_2} -  \frac{\partial g_1}{\partial x_2} \frac{\partial g_2}{\partial x_1} \right) \, dx
\end{equation*}
where $\chi_{A,B}(\cdot, \cdot, \cdot)$ is the indicator function for the set $\{|(v_1, v_2)| < A, \, |v_3| < B \}$. If we can obtain a sufficient bound on $\E[\mathcal{A}_{A,B}]$ then, after an application of Chebyshev's Inequality, we have a sufficient estimate on $\mathcal{A}_{A,B}$ with high probability. This is where Lemma \ref{l:det_cov} is invoked, since
\begin{equation*}
\E \left[\chi_{A,B} \left( g_1,g_2, \frac{\partial g_1}{\partial x_1}  \frac{\partial g_2}{\partial x_2} -  \frac{\partial g_1}{\partial x_2} \frac{\partial g_2}{\partial x_1} \right) \right] =  \mathbb{P} \left[ \mathfrak{g}_{\alpha} : |\mathfrak{G}(u)| < A, | \det D \mathfrak{G}(u)| < B \} \right].
\end{equation*}

We remind ourselves (as already done in the proof of Theorem \ref{t:conv_prob}) that in the local coordinates around $x$ given by $y=\exp_x(Y)$, which allows us to identify $\R^2$ with $T_xM$, we set $\tilde{V}(x)$ to be the constant vector field given by the trivial extension of $(\tilde{V}_{x,T})_{|Y=0}$ to $\R^2$.  We can choose $T \geq T_1(V(x))$ such that $\tilde{V}_{x,T} = \tilde{V}(x) + o(1)$; we remind ourselves that we have also assumed $\tilde{V}(x) \neq 0$. Note our $T_1$ can be taken to be greater than the $T_0$ appearing in the statement of Lemma \ref{l:det_cov}. Let $s_{1},s_{2}$ be any positive numbers satisfying $2s_{1}+s_{2}=2$, so that $\gamma^{-2} = \gamma^{-2s_1 - s_2}$.
The estimate given by Lemma \ref{l:det_cov} together with \eqref{eq:Ac(A,B)>=cgamma^2}, yields that
\begin{align} \label{e:penult_step_small_number_K}
\nonumber K & \leq C_3 \gamma^{-2}  \,  \left(  \beta_{2,2} + c_2 \frac{R}{\sqrt{\tilde{K}}} \gamma  \right)^2 \\
\nonumber & \times \sqrt{\mathcal{O}\left(\frac{R}{T}\right) A^{1/2} + \mathcal{O}\left(\frac{R^2}{T^2}\right)A^2 + \mathcal{O}\left(\frac{R^2}{T^2}\right)A^{-1} + A^{1/2} +  \left( \beta_{2,2} + c_3^2 \frac{R^2}{\tilde{K}} \gamma \right)  } \times R^2 \\
\nonumber & = C_3 \,  \left(\beta_{2,2}\gamma^{-s_1} + c_2 \frac{R}{\sqrt{\tilde{K}}} \gamma^{1-s_1}   \right)^2 \cdot \bigg( \mathcal{O}\left(\frac{R}{T}\right) A^{1/2} \gamma^{-2s_2} + \mathcal{O}\left(\frac{R^2}{T^2}\right)A^2\gamma^{-2s_2} + \mathcal{O}\left(\frac{R^2}{T^2}\right)A^{-1} \gamma^{-2s_2} \\
& + A^{1/2}  \gamma^{-2s_2}+  \left( \beta_{2,2}\gamma^{-2s_2} + c_3^2 \frac{R^2}{\tilde{K}} \gamma^{1-2s_2} \right)  \bigg)^{\frac{1}{2}} \times R^2.
\end{align}
In preparation for the final steps of our proof, let us define
\begin{align} \label{e:xi_value}
\nonumber & \xi := C_3 \left(\beta_{2,2}\gamma^{-s_1} + c_2 \frac{R}{\sqrt{\tilde{K}}} \gamma^{1-s_1}   \right)^2 \cdot \bigg( \mathcal{O}\left(\frac{R}{T}\right) A^{1/2} \gamma^{-2s_2} + \mathcal{O}\left(\frac{R^2}{T^2}\right)A^2\gamma^{-2s_2} + \mathcal{O}\left(\frac{R^2}{T^2}\right)A^{-1} \gamma^{-2s_2} \\
& + A^{1/2}  \gamma^{-2s_2}+  \left( \beta_{2,2}\gamma^{-2s_2} + c_3^2 \frac{R^2}{\tilde{K}} \gamma^{1-2s_2} \right)  \bigg)^{\frac{1}{2}}
\end{align}
so that we have $K \leq \xi R^2$; without loss of generality, if necessary, we increase $C_3$ to be greater than $1$.

Let us now proceed by contradiction. To wit, let us assume that $K > \eta R^2$ from which it follows that $\frac{R}{\sqrt{\tilde{K}}} < \frac{1}{c_1 \sqrt{\eta}}$, on recalling \eqref{eq:Ktild=K}. In order to make our choice of parameters $\gamma, \beta_{2,2},$ and $T_0$ properly so that we can eventually make $\xi < \eta$, we proceed as follows.
\begin{enumerate}
\item To start, we let $\gamma <1$ and $\beta_{2,2} < 1$.
We would like to make $A^{1/2} \gamma^{-2s_2} < \frac{1}{100}$. Therefore when given $\eta$, this leads us the first restriction of
\begin{equation*}
\gamma^{1-4s_2} < \frac{1}{100} \frac{c_1 \sqrt{\eta}}{c_2},
\end{equation*}
therefore requiring that $1 - 4s_2 > 0 $.
\item We aim to have that $A^2 \gamma^{-2s_1} =  \left(\beta_{2,2}\gamma^{-s_1} + c_2 \frac{R}{\sqrt{\tilde{K}}} \gamma^{1-s_1}   \right)^2 $ in the r.h.s of (\ref{e:xi_value}) is less than $\frac{\eta}{100C_3}$. This requires that
\begin{equation*}
\gamma^{1-s_1} < \frac{c_1 \,\eta}{100 \, c_2 \, C_3}
\end{equation*}
therefore requiring that $1 - s_1 > 0 $. As we have the initial restriction of $2s_1 + s_2 =2$, as a concrete working set of parameters, we make the choice of $s_1 = 15/16$ and $s_2 = 1/8$. Hence, we take
\begin{equation*}
\gamma  < \min \left\{ \left(\frac{c_1 \, \sqrt{\eta}}{100 \, c_2 \, C_3} \right)^{2}, \left(\frac{c_1 \eta}{100 \, c_2 \, C_3} \right)^{16} \right\} = \left( \frac{c_1}{100 c_2 C_3} \right)^{16} \eta^{16}.
\end{equation*}
\item With the goals of eventually having $A^2 \gamma^{-2s_1} < \frac{\eta}{100C_3}$ in the r.h.s of (\ref{e:xi_value}) and the second (non-constant) factor that is raised to the power of $1/2$ be $< 1$, we make another restriction of
\begin{align*}
& \beta_{2,2} < \min \left\{ \frac{\eta^{1/2}}{\sqrt{100C_3}} \gamma^{s_1}, \frac{1}{10} \gamma^{2s_2} \right\} =  \min \left\{ \frac{\eta^{1/2}}{100C_3} \gamma^{15/16}, \frac{1}{100} \gamma^{1/4} \right\} \\
& = \frac{\eta^{1/2}}{100C_3} \gamma^{15/16} < \frac{1}{100C_3} \,  \left( \frac{c_1}{100 c_2 C_3} \right)^{15} \eta^{31/2}.
\end{align*}
after using the restriction on $\gamma$.  Note that our restrictions on $\gamma$ and $\beta$ make all terms which do not depend on $T$ sufficiently small.
\item Now, considering all the terms in (\ref{e:xi_value}) involving factors of $T^{-1}$, appropriately bounding the term $\mathcal{O}\left(\frac{R^2}{T^2}\right)A^{-1} \gamma^{-2s_2}$ above by $\frac{1}{100}$ will place similar bounds on the aforementioned remaining terms.  Letting $C$ be the supremum amongst all the implicit constants depending on a derivative of $V$ (that is, the $C^1$ norm of $V$'s coefficients) and appearing in the $\mathcal{O}$ notations, we make the final restriction of
\begin{equation*}
\frac{C}{T} < \frac{\sqrt{A}}{10R} \gamma^{2s_2}= \frac{\sqrt{A}}{10R} \gamma^{1/4} < \frac{1}{10R} \left(\frac{c_1}{c_2 C_3} \right)^4 \eta^{15/4}.
\end{equation*}
\end{enumerate}
Hence, we enlarge $T_0$ to satisfy our final constraint (and therefore inheriting a possible dependence on $R$). With all these choices made, in the order described above, we have arrive at $\xi < \frac{\eta}{2500}$ and therefore contradict the hypothesis of $K > \eta R^2$. This concludes the proof of Proposition \ref{p:abc_stability}.
\end{proof}

We now collect some facts on nodal components that fall outside of our local approximation and are exceptional in terms of the volumes of the nodal domains for which they form the boundary. But first, we record some necessary definitions:

\begin{defn}
\begin{enumerate}
\item We say that a nodal component of $\mathfrak{g}_{\alpha}^{-1}(0)$ is $\xi$-small if it is adjacent to a domain of volume $< \xi$.

\item For $R >0$, let $\nod_{\xi-sm}(\mathfrak{g}_{\alpha}, R)$ be the number of $\xi$-small components of $\mathfrak{g}_{\alpha}^{-1}(0)$ lying entirely inside of $B(R)$.

\item We say that a nodal component of $\mathfrak{g}_{\alpha}^{-1}(0)$ is $R_1$-long if its diameter is $> R_1$.

\item For $R > R_1 >0$, let $\nod_{diam > R_1}(\mathfrak{g}_{\alpha};R)$ be the number of $R_1$-long components of $\mathfrak{g}_{\alpha}^{-1}(0)$ lying entirely inside of $B(R)$.
\end{enumerate}
\end{defn}

The following results are taken directly from \cite{So12}:

\begin{lem}[Cf. {\cite[Lemma 8]{So12}}] \label{l:long_components}
Consider $\mathfrak{g}_{\alpha}$ on $B(R)$. Let $\delta>0$ be given. Then given $R_1 >0$, there exists $R_0(R_1)$ such that for all $R \geq R_0 \geq R_1$ and a uniform constant $C>0$, such that the number of components $\Gamma \subset B(R)$ of $\mathfrak{g}_{\alpha}^{-1}(0)$ of diameter $> R_1$ is
\begin{equation*}
\nod_{diam > R_1}(\mathfrak{g}_{\alpha};R) < \frac{C}{R_1} R^2
\end{equation*}
with probability $> 1 - \delta$.
\end{lem}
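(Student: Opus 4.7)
The plan is to combine a Kac-Rice bound on the expected nodal length of $\gfr_{\alpha}$ inside $B(R)$ with the elementary observation that each $R_{1}$-long component contributes at least length $R_{1}$ to the total nodal length, then conclude via Markov's inequality. This is the standard Nazarov-Sodin length-vs-number argument, used repeatedly in \cite{So12, NS15}.

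First, I would establish that
$$
\E[\len(\gfr_{\alpha}^{-1}(0)\cap B(R))] = c_{\alpha}\cdot \vol(B(R)) = c_{\alpha}\pi R^{2}
$$
for some constant $c_{\alpha}>0$ depending only on $\alpha$. This is immediate from the Kac-Rice formula for the nodal length of a planar Gaussian field: stationarity of $\gfr_{\alpha}$ makes the Kac-Rice integrand constant in the spatial variable, so the expected nodal length is linear in $\vol(B(R))$. Kac-Rice applies because $(\gfr_{\alpha}(x),\nabla\gfr_{\alpha}(x))$ is non-degenerately Gaussian and $\gfr_{\alpha}$ is almost surely $C^{2}$-smooth, which follows for $\alpha<1$ from axioms $(\rho 1)$--$(\rho 4^{*})$, and for $\alpha=1$ from the standard analysis of Berry's random waves.

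Next, I would record the elementary geometric fact that any connected smooth curve $\Gamma$ with $\diam(\Gamma)>R_{1}$ has length greater than $R_{1}$, since the extrinsic distance between two diameter-realizing points on $\Gamma$ is bounded above by the intrinsic arc-length along $\Gamma$. Summing over all $R_{1}$-long nodal components entirely contained in $B(R)$ gives the deterministic inequality
$$
\len(\gfr_{\alpha}^{-1}(0)\cap B(R)) \ge R_{1}\cdot \nod_{\diam>R_{1}}(\gfr_{\alpha};R).
$$
Combining this with the expected-length bound and Markov's inequality yields, for every $C>0$,
$$
\mathbb{P}\!\left[\nod_{\diam>R_{1}}(\gfr_{\alpha};R)\ge \frac{C}{R_{1}}R^{2}\right] \le \mathbb{P}\!\left[\len(\gfr_{\alpha}^{-1}(0)\cap B(R))\ge CR^{2}\right] \le \frac{c_{\alpha}\pi}{C},
$$
so choosing $C=C(\delta,\alpha)$ sufficiently large makes the right-hand side less than $\delta$. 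The hypothesis $R\ge R_{0}(R_{1})$ is used only to absorb a boundary contribution from components meeting $\partial B(R)$ (negligible against $R^{2}/R_{1}$ once $R/R_{1}$ is large enough).

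The main obstacle, in my expectation, lies in the $\alpha=1$ case, where the spectral measure $\sigma_{S^{1}}$ is supported on a curve and so certain higher-order jets of $\gfr_{1}$ degenerate. Fortunately the Kac-Rice density for nodal length requires only non-degeneracy of the three-variate Gaussian $(\gfr_{1}(0),\nabla\gfr_{1}(0))$, rather than joint distributions with higher derivatives, and this non-degeneracy holds, as noted in the earlier discussion referring to \cite{CMW16}. Were one to insist on a $C$ independent of $\delta$ one would need to upgrade Markov to a variance or concentration estimate, but the statement as given admits the $\delta$-dependence in $C$.
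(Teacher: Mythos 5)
The paper does not actually prove this lemma: it is quoted directly from \cite[Lemma 8]{So12}, with only the remark that in two dimensions the higher-dimensional machinery (restriction to spheres) can be replaced by a length-versus-diameter consideration. Your argument is exactly that two-dimensional reduction and it is correct: Kac--Rice gives $\E[\len(\mathfrak{g}_{\alpha}^{-1}(0)\cap B(R))]=c_{\alpha}\pi R^{2}$ (valid for all $\alpha\in[0,1]$, since only the non-degeneracy of $(\mathfrak{g}_{\alpha}(0),\nabla\mathfrak{g}_{\alpha}(0))$ is needed), each connected component of diameter $>R_{1}$ contained in $B(R)$ contributes length $>R_{1}$, the components are disjoint, and Markov finishes. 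Two small remarks. First, with Markov no boundary correction and no largeness of $R$ are needed at all, since the counted components lie entirely in $B(R)$; your closing sentence about $R_{0}$ absorbing a boundary term is therefore unnecessary rather than wrong. Second, your $C$ depends on $\delta$, whereas the intended reading (and the route that explains why the statement carries an $R_{0}$) keeps $C$ uniform, say $C=2\pi c_{\alpha}$, and puts the $\delta$-dependence into $R_{0}$: one replaces Markov by the $L^{1}$ (hence in-probability) convergence of $\len(\mathfrak{g}_{\alpha}^{-1}(0)\cap B(R))/\vol(B(R))$ to $c_{\alpha}$, obtained from the same integral-geometric sandwich plus Wiener's ergodic theorem already used for \eqref{eq:NS const def prop}. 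Since the lemma is only ever invoked with $\delta$ fixed in advance, either version suffices for the paper's purposes.
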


In our $2$-dimensional case Lemma \ref{l:long_components} follows from the isoperimetric inequality; in the more general setting a slightly heavier
machinery involving the restriction of $\mathfrak{g}_{\alpha}$ to the boundary of $B(R)$ is employed. We note that it, thanks to discarding the long components, coverings of $B(R)$ with balls of radius $R_1 < R$ as in the previous sequence of statements will give us an advantage.

\begin{lem}[Cf. {\cite[Lemma 9]{So12}}] \label{l:small_vol}
Let $\xi>0$ be given.  There exist constants $c_0, C_0>0$ such that
\begin{equation*}
\limsup\limits_{R \rightarrow \infty} \frac{\E \left[ \nod_{\xi-sm}(\mathfrak{g}_{ \alpha}, R) \right]}{R^2} \leq C_0 \, \xi^{c_0}.
\end{equation*}
\end{lem}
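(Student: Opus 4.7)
The plan is to relate each $\xi$-small component to an interior critical point of $\mathfrak{g}_{\alpha}$ whose critical value is small, and then to invoke Kac--Rice to count such critical points. The starting observation is deterministic: if $\gamma \subset \mathfrak{g}_{\alpha}^{-1}(0)$ is a component that, via Jordan's theorem, encloses a nodal domain $\Omega$ with $|\Omega| < \xi$, then $\mathfrak{g}_{\alpha}|_{\Omega}$ is of a fixed sign, vanishes on $\partial \Omega = \gamma$, and attains an interior extremum at some critical point $x_{0} \in \Omega$. Setting $M := \|\nabla \mathfrak{g}_{\alpha}\|_{L^{\infty}(B(R+1))}$, the ball $B(x_{0}, |\mathfrak{g}_{\alpha}(x_{0})|/M)$ is contained in $\Omega$, and hence $|\mathfrak{g}_{\alpha}(x_{0})| \le M \sqrt{\xi/\pi}$. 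Thus on the event $\{M \le M_{0}\}$, each $\xi$-small component of $\mathfrak{g}_{\alpha}^{-1}(0)$ lying in $B(R)$ can be assigned, injectively, to a critical point of $\mathfrak{g}_{\alpha}$ in $B(R)$ whose critical value lies in $[-\epsilon, \epsilon]$, where $\epsilon := M_{0}\sqrt{\xi/\pi}$.

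Next, I would apply the Kac--Rice formula to the critical-point process of $\mathfrak{g}_{\alpha}$ restricted to small critical values. Since the joint law of $(\mathfrak{g}_{\alpha}(0), \nabla \mathfrak{g}_{\alpha}(0))$ is non-degenerate Gaussian (under $(\rho 3)$ and $(\rho 4^{*})$ for $\alpha<1$, and by the explicit computation of~\cite{CMW16} for $\alpha = 1$, as already invoked in Lemma \ref{l:det_cov}), and the conditional expectation $\E\bigl[|\det \nabla^{2} \mathfrak{g}_{\alpha}(0)| \mid \mathfrak{g}_{\alpha}(0) = t,\, \nabla \mathfrak{g}_{\alpha}(0) = 0\bigr]$ is uniformly bounded for $t$ in a neighbourhood of $0$, the corresponding Kac--Rice intensity is bounded, and integration over the critical-value variable in $[-\epsilon, \epsilon]$ yields
\[
\E\bigl[\#\{x \in B(R): \nabla \mathfrak{g}_{\alpha}(x) = 0,\, |\mathfrak{g}_{\alpha}(x)| \le \epsilon\}\bigr] \le C_{1}\, \epsilon \cdot R^{2},
\]
in the spirit of Lemma \ref{lem:Kac-Rice Euclid tangencies}.

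To handle the complementary event $\{M > M_{0}\}$, I would use Borel--TIS on the stationary Gaussian field $\nabla \mathfrak{g}_{\alpha}$ together with the derivative moment estimates from $(\rho 2)$ to obtain $\mathbb{P}(M > M_{0}) \le C_{2} \exp(-c_{2} M_{0}^{2})$ once $M_{0}$ exceeds a universal threshold. A second-moment Kac--Rice bound of the form $\E[\nod(\mathfrak{g}_{\alpha}, R)^{2}] \le C_{3} R^{4}$ (analogous to Lemma \ref{lem:Kac Rice Euclid}(1) at the level of second moments) combined with Cauchy--Schwarz yields
\[
\E\bigl[\nod_{\xi-sm}(\mathfrak{g}_{\alpha}, R) \cdot \mathbf{1}_{\{M > M_{0}\}}\bigr] \le C_{3}^{1/2} R^{2} \cdot C_{2}^{1/2} \exp(-c_{2} M_{0}^{2}/2).
\]
Summing the two contributions and choosing $M_{0} = \xi^{-1/8}$ (say) makes the exponential term super-polynomially small in $\xi$, leaving
\[
\limsup_{R \to \infty} \frac{\E[\nod_{\xi-sm}(\mathfrak{g}_{\alpha}, R)]}{R^{2}} \le C_{0}\, \xi^{3/8},
\]
which is the desired bound with $c_{0} = 3/8$; in fact any $c_{0} \in (0, 1/2)$ is achievable by the same method.

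The main obstacle I anticipate is the uniform bound on the Kac--Rice intensity near zero critical value used in the second step: one must ensure that the joint density of $(\mathfrak{g}_{\alpha}(0), \nabla \mathfrak{g}_{\alpha}(0))$ is bounded in a neighbourhood of $(0,0)$ and that the conditional expectation of the Hessian determinant is bounded there as well. For $\alpha < 1$ this is routine under $(\rho 4^{*})$; the delicate case is $\alpha = 1$, where one must invoke the non-degeneracy computation from~\cite{CMW16}. A minor secondary point is that the statement counts only components strictly contained in $B(R)$, which guarantees that the enclosed Jordan domain $\Omega$ is itself contained in $B(R)$ and therefore that the critical point $x_{0}$ is captured by the Kac--Rice count over $B(R)$, so no boundary adjustments are needed.
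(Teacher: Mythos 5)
The paper does not actually prove this lemma; it is quoted verbatim from \cite[Lemma 9]{So12}, and your reconstruction follows the same standard route (small domain $\Rightarrow$ interior critical point with small critical value $\Rightarrow$ Kac--Rice). However, as written your argument has a genuine gap at the Borel--TIS step. You set $M=\|\nabla\mathfrak{g}_{\alpha}\|_{L^{\infty}(B(R+1))}$ and claim $\mathbb{P}(M>M_{0})\le C_{2}e^{-c_{2}M_{0}^{2}}$ with $M_{0}=\xi^{-1/8}$ \emph{fixed} while $R\to\infty$. This is false: for a stationary field the expected supremum of $|\nabla\mathfrak{g}_{\alpha}|$ over $B(R+1)$ grows like $\sqrt{\log R}$, so for any fixed $M_{0}$ one has $\mathbb{P}(M>M_{0})\to 1$ as $R\to\infty$, and your bound $C_{3}^{1/2}R^{2}\,\mathbb{P}(M>M_{0})^{1/2}$ for the complementary event is then of order $R^{2}$ with no gain in $\xi$ — the conclusion does not follow. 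The standard repair is to localize: the inclusion $B(x_{0},|\mathfrak{g}_{\alpha}(x_{0})|/M)\subseteq\Omega$ only needs the gradient sup over a ball of radius $\sqrt{\xi/\pi}$ (or a unit ball) around $x_{0}$, so one runs the dichotomy per unit ball, where stationarity gives $\mathbb{P}\bigl(\sup_{B(u,1)}|\nabla\mathfrak{g}_{\alpha}|>M_{0}\bigr)\le Ce^{-cM_{0}^{2}}$ uniformly in $u$, and then sums the $O(R^{2})$ local contributions (using a local second-moment bound and Cauchy--Schwarz on each bad ball).

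Two secondary points. First, the assignment $\gamma\mapsto x_{0}$ is not injective: a single nodal domain of area $<\xi$ may have several boundary components, all of which are $\xi$-small and all of which you send to the same extremum. This is repairable up to a bounded multiplicative factor — every critical point of $\mathfrak{g}_{\alpha}$ inside a small domain $\Omega$ has small critical value by the same in-radius argument, and the number of boundary components of $\Omega$ is $O(1+\#\{\text{critical points in }\Omega\})$ by an Euler-characteristic count — but it must be said. Second, the definition of $\xi$-small in the paper only requires adjacency to \emph{some} domain of volume $<\xi$, which need not be the Jordan interior of $\gamma$; hence the critical point need not lie in $B(R)$, and your closing remark that ``no boundary adjustments are needed'' is not justified without enlarging the ball in the Kac--Rice count or controlling components near $\partial B(R)$ via Lemma \ref{lem:Kac Rice Euclid}(2).
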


Given our facts about $\xi$-small and $R_1$-long components, we can now state the following important lemma:

\begin{lem} \label{l:sub_tang_low_count}
Let $x \in \M\setminus\{V=0\}$. Given the parameters $R, \delta, \eta_0>0$, there exists a covering radius parameter $R_{1,0} < R$, an auxiliary density parameter $\eta_1(R, \eta_0) < \eta_0$, an event $E$ with $\mathbb{P}[E] < \delta$ and a stability parameter $\beta_{2,2}$ such that outside $E$, if $F$ is $(\eta_1, \beta_{2,2}, 3R_{1,0})$-stable for all $T \geq T_0(R)$, then $|\det D \mathfrak{G}| > \beta_{2,2}$ on all but $\eta_0 R^2$ components of $\mathfrak{g}_{\alpha}^{-1}(0)$ that are also uniformly stable as in Definition \ref{d:unif_stability_fields}.
\end{lem}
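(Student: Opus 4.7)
\emph{Proof proposal.} The plan is to partition the uniformly stable components of $\mathfrak{g}_{\alpha}^{-1}(0)$ inside $B(R)$ into three groups---$R_{1,0}$-long components, $\xi$-small components, and short non-$\xi$-small components---and to show that the first two groups together have size $< 2\eta_{0} R^{2}/3$ by Lemmas \ref{l:long_components} and \ref{l:small_vol}, while the third group satisfies the desired bound $|\det D \mathfrak{G}| > \beta_{2,2}$ outside of those few components that happen to meet a $\beta_{2,2}$-unstable ball $\mathcal{G}_{i}$ of the covering, the count of such being controlled via a volume-packing argument.

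First I would fix $\xi = \xi(\eta_{0})$ satisfying $C_{0}\xi^{c_{0}} < \eta_{0}/6$, with $C_{0}, c_{0}$ as in Lemma \ref{l:small_vol}, so that Markov's inequality yields $\nod_{\xi-sm}(\mathfrak{g}_{\alpha}, R) < \eta_{0}R^{2}/3$ outside an event $E_{1}$ of probability $< \delta/3$. Next I would pick the covering radius $R_{1,0} > 3C/\eta_{0}$ (with $C$ the constant from Lemma \ref{l:long_components}), so that outside a second event $E_{2}$ of probability $< \delta/3$ the count of $R_{1,0}$-long components is $< \eta_{0} R^{2}/3$. I then set $\eta_{1} := c_\kappa\, \eta_{0}\, \xi / R_{1,0}^{2}$ with $c_\kappa > 0$ depending only on the covering multiplicity $\kappa$, and invoke Proposition \ref{p:abc_stability} with parameters $(\delta/3, \eta_{1}, R_{1,0})$ in place of $(\delta, \eta, R_{1})$ to produce $\beta_{2,2}$ and $T_{0}(R)$. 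Setting $E := E_{1} \cup E_{2}$, we have $\mathbb{P}[E] < 2\delta/3 < \delta$, and the stability hypothesis of the lemma is verified with probability $> 1 - \delta/3$ for $T \ge T_{0}$ by that proposition.

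On $E^{c}$, assuming $\mathfrak{G}$ is $(\eta_{1}, \beta_{2,2}, 3R_{1,0})$-stable, I consider any short (diameter $\le R_{1,0}$), non-$\xi$-small, uniformly stable component $\gamma \subset \mathfrak{g}_{\alpha}^{-1}(0)$ inside $B(R)$. Such a $\gamma$ is contained in some Euclidean ball of radius $R_{1,0}$, hence lies inside at most $\kappa$ of the enlarged balls $\mathcal{G}_{i}$. If every one of these $\mathcal{G}_{i}$ is $\beta_{2,2}$-stable, then the stability dichotomy combined with $g_{1}|_{\gamma} = \mathfrak{g}_{\alpha}|_{\gamma} \equiv 0$ forces $|\det D\mathfrak{G}(u)| > \beta_{2,2}$ for every $u \in \gamma$. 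The only short, non-$\xi$-small components that can fail the conclusion are therefore those meeting at least one of the $\le \eta_{1} R^{2}$ unstable balls; I would bound the count of ``bad'' components per unstable ball by $O(R_{1,0}^{2}/\xi)$, giving a total bad count of $\le c_\kappa \eta_{1} R_{1,0}^{2} R^{2}/\xi < \eta_{0}R^{2}/3$. Summing over the three groups yields the claimed bound of $\eta_{0}R^{2}$.

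The main obstacle is establishing the volume-packing estimate of $O(R_{1,0}^{2}/\xi)$ non-$\xi$-small components per unstable ball, since planar nodal components may be arbitrarily nested and one cannot simply sum the volumes of their enclosed domains. The remedy is to charge each component $\gamma$ to a unique \emph{inner-adjacent} nodal domain---in a nest $\gamma_{1} \supset \gamma_{2} \supset \ldots \supset \gamma_{n}$, this is the annular region between $\gamma$ and the next nested component (or the interior of $\gamma$ when $\gamma$ is innermost). Each such inner-adjacent domain has volume $\ge \xi$ by the non-$\xi$-small hypothesis, is assigned to a single component, and is contained in the $R_{1,0}$-ball enclosing $\gamma$, so the total count of such domains---hence of the components---is at most $O(R_{1,0}^{2}/\xi)$ per unstable ball, as desired.
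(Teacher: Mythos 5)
Your proposal is correct and follows essentially the same route as the paper: exclude the $R_{1}$-long and $\xi$-small components via Lemmas \ref{l:long_components} and \ref{l:small_vol}, invoke Proposition \ref{p:abc_stability} to produce $\beta_{2,2}$ and $T_{0}$, and bound the remaining bad components by a packing count of $O(R_{1,0}^{2}/\xi)$ non-$\xi$-small components per unstable ball. Your explicit charging of each component to its inner-adjacent nodal domain is precisely the ``uniform lower bound on the volume encompassed by the individual components'' that the paper's proof uses without elaboration, so this fills in rather than deviates from the argument.
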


\begin{proof}
Our proof is almost verbatim that of \cite[Lemma 4.8]{SW18} which is similar in spirit to that in \cite[Section 4.2]{NS09}. We refer the interested reader to this paper but highlight the main details as well as an explicit formula which gives further insight into the dependences between our variety of parameters. To ease the notation and intuitively connect the parameters in this lemma with those in our previous statements, we replace $R_{1,0}$ by $R_1$ in our proof.

Thanks to lemmas \ref{l:long_components} and \ref{l:small_vol}, we know that with probability $1  - \frac{\delta}{2}$ there exists uniform constants $C_1,C_2, c_0>0$ such that we have that
\begin{equation} \label{e:long_toosmall_comps}
\nod_{diam>R_1}(\mathfrak{g}_{\alpha}, R) \leq C_1 \, \frac{R^2}{R_1} \, \mbox{ and } \, \nod_{\xi_0-sm}(\mathfrak{g}_{\alpha}, R) \leq C_2 \, \xi^{c_0} \, R^2
\end{equation}
for some $R_1 < R$, where $R \geq R_0$. This reduces us to now having to count components that are not $R_1$-long or are not $\xi_0$-small.  This particular $R_1$ is what we will take as our covering radius parameter, for at least the time being.

Proposition \ref{p:unstable_comp} states that with probability $1 - \frac{\delta}{2}$ and $R_0$ possibly larger, the total number of components that are not long or small in the above senses and are stable in terms of having low-lying zeroes of $\nabla \mathfrak{g}_{\alpha}$ is $\leq \eta_1 R^2$.  Now, we apply Proposition \ref{p:abc_stability} and the uniform lower bound we have on the volume encompassed by the individual components to see that the number of components that are neither long nor small with no sub-$\beta_{2,2}$ $\tilde{V}_{x,T}$- tangencies is $\leq C_3 \frac{R_1^2}{\xi_0} \frac{\eta_1}{2} R^2$ for some uniform $C_3 >0$. Summing up all these counts gives an upper bound for the number of components that are either unstable in the sense of Definitions \ref{d:unif_stability_fields} and \ref{d:unstable_event} and those that are not amenable to our local methods.

By summing all of our declared estimates appearing in (\ref{e:long_toosmall_comps}) and the previous paragraph, and therefore collating all the parameters we can adjust, we aim to establish the following inequality:
\begin{equation*}
C \left( \frac{1}{R_1} + \xi_0^{c_1} + \frac{R_1^2}{2 \xi_0}\eta_{1}  + \eta_1 \right) < \eta_0
\end{equation*}
for some uniform constants $C, c_1>0$. Motivated by this goal, we can choose the range for our medley of parameters in the following order: our radius parameter $R_1 > \frac{4C}{ \eta_0}$, $\xi_0 < (\frac{\eta_0}{4C})^{1/c_1}$, and finally our auxiliary density parameter $\eta_1 < \min \{\frac{\eta_0}{4C}, \frac{2 \xi_0 }{R_1^2} \frac{\eta_0}{4C} \}$. Our choice of $T_0$ comes from Proposition \ref{p:abc_stability}. This gives our desired result.
\end{proof}

\begin{rem}
Note that in the above proof, our choice of parameters $R_1 = R_{1,0}, \xi_0, \eta_1$ is not dictated by our choice of stability parameter $\beta_{2,2}$ and hence not making our reductions circular. However, these choices of parameters could possibly make $\beta_{2.2}$ smaller. Given the restrictions we derived in the proof of Proposition \ref{p:abc_stability}, this does not harm our overall argument.
\end{rem}

Finally, we can deduce Proposition \ref{p:unstable_tangencies} from Lemma \ref{l:sub_tang_low_count}.

\begin{proof}[Proof of Proposition \ref{p:unstable_tangencies}]
We apply Lemma \ref{l:sub_tang_low_count} directly. Considering that we are given $\eta, \delta>0$ and $x$, we take any $R_0 > R_{1,0}$ such that $\frac{R_0}{R_{1,0}}$ is large, take $\eta_0 < \eta$, set $\beta_2 = \beta_{2,2}$, and take the same $T_0$ as provided.
\end{proof}

We close this section by emphasizing that our count of $\eta R^2$ is for the total number of components with at least one near-degenerate tangency.  Thus, there is still room to make more precise this upper bound for those with precisely $k$ tangencies with at least one being near-degenerate.

\section{Proof of Theorem \ref{thm:main meas conv}: global results in the Riemannnian scenario}

\subsection{Proof of Theorem \ref{thm:main meas conv} assuming Theorem \ref{t:main_thm_less_equal_1}: consolidating the individual counts:}

\label{sec:proof main thm meas}

\begin{proof}

Using the constants $C_{\alpha,k}$ prescribed by Theorem \ref{t:main_thm_less_equal_1},
we define the measure
\begin{equation}
\label{eq:mualpha def}
\mu_{\alpha}:=\sum\limits_{k=0}^{\infty}C_{\alpha,k}\cdot\delta_{k},
\end{equation}
and Theorem \ref{t:main_thm_less_equal_1} part (1) reads for every $k\ge 0$,
\begin{equation*}
\E[|\left(\mu_{f_{\alpha,T}}(V)\right)(k) - \mu_{\alpha}(k) |] \rightarrow 0
\end{equation*}
as $T\rightarrow\infty$.
We claim that Theorem \ref{thm:main meas conv} holds true with $\mu_{\alpha}$ as in \eqref{eq:mualpha def}.
First, since by the proof of Theorem \ref{t:main_thm_less_equal_1} below, the constants $C_{\alpha}$ are
same as in Theorem \ref{thm:euclid_count} (1)
applied on the random field $F=\gfr_{\alpha}$ with $\zeta$ arbitrary (cf. Proposition \ref{p:almost_main_theorem}), where
the $C_{k}$ are independent of $\zeta$ by Theorem \ref{thm:euclid_count} (2), the measure
$\mu_{\alpha}$, as defined in \eqref{eq:mualpha def}, is a probability measure, thanks to Theorem \ref{thm:euclid_count} (4).

Therefore $\{\mu_{f_{\alpha,T}}(V)\}_{T}$ on $\Z_{\ge 0}$ is a collection of random probability measures,
and $\mu_{\alpha}$ a deterministic probability measure, so that,
for every $k\ge 0$, as $T\rightarrow\infty$, $(\mu_{f_{\alpha,T}}(V))(k)$ converges in mean (and hence in probability) to $\mu_{\alpha}(k)$.
This is precisely the scenario considered by ~\cite[Lemma 6.3]{ALL}, proof (and same result)
already contained within ~\cite[Proof of Theorem 1.1, \S 7.1 on p. 57]{SW18},
whose conclusion yields the convergence in probability of
$\{\mu_{f_{\alpha,T}}(V)\}_{T}$ as $T\rightarrow\infty$ to $\mu_{\alpha}$ w.r.t. the total variation distance \eqref{eq:D tot var dist def},
i.e. it gives \eqref{eq:main meas conv}.
Finally, the support statement of Theorem \ref{thm:main meas conv} for $\mu_{\alpha}$ is an immediate corollary from
the definition \eqref{eq:mualpha def} of $\mu_{\alpha}$ and Theorem \ref{t:main_thm_less_equal_1} part (2).
Theorem \ref{thm:main meas conv} is now proved.

\end{proof}

\subsection{Proof of Theorem \ref{t:main_thm_less_equal_1}}

\begin{defn}\label{def:plus_minus}
For $y \in \R$, we denote
\begin{equation*}
|y|_+ = \max \{0,y\}
\end{equation*}
and
\begin{equation*}
|y|_-= \min \{0,y\},
\end{equation*}
so that
\begin{equation*}
| \cdot | = | \cdot |_+ + | \cdot |_-.
\end{equation*}
\end{defn}

Let $V \in \mathcal{V}(\M)$ throughout this section. We remind ourselves that
\begin{equation*}
\nod_{V}(f,k) :=\#\{\gamma  \subseteq f^{-1}(0):\: \nod_{V}(\gamma)=k\}
\end{equation*}
is the total number of nodal components of $f$ with precisely $k$ tangencies w.r.t. $V$. We now present a key proposition that builds upon on our previous (and crucial) local results and essentially gives the conclusions of our main theorems:

\begin{prop} \label{p:almost_main_theorem}
Let $k \ge 0$ be given and $c_{2, \alpha}$ be the Nazarov-Sodin constant of $\mathfrak{g}_{\alpha}$,
and the constants $C_{\alpha,k}$ prescribed by an application of Theorem \ref{t:main_thm_less_equal_1} (1) on the
field $\gfr_{\alpha}$ (with $\zeta$ arbitrary). Then, as $T \rightarrow \infty$, we have that
\begin{equation*}
\E \left[  \left| \frac{ \nod_{V}\left( f_T,  k  \right)}{c_{2, \alpha} \cdot \vol(\M) \cdot T^2} - C_{\alpha,k} \right|_{\pm}  \right] \rightarrow 0.
\end{equation*}
\end{prop}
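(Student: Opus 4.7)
The plan is to combine a Riemannian integral-geometric sandwich with the local convergence statement in Theorem \ref{t:conv_prob}, while carefully excising small neighbourhoods of the zero set $Z(V):=\{x\in\M:V(x)=0\}$, which is finite by the definition of $\mathcal{V}(\M)$. Fix $\rho>0$ small and write $\M=\M_\rho\sqcup U_\rho$, where $U_\rho=\bigcup_{x_0\in Z(V)}B(x_0,\rho)$ and $\M_\rho=\M\setminus U_\rho$. By Lemma \ref{lem:Kac Rice manifold} applied to each $B(x_0,\rho)$, the expected number of nodal components of $f_{\alpha;T}$ contained in $U_\rho$ is $\mathcal{O}(T^2\rho^2)$, so the contribution of $U_\rho$ to $\nod_V(f_T,k)/(c_{2,\alpha}\vol(\M)T^2)$ is $\mathcal{O}(\rho^2)$ in mean uniformly in $T$. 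Thus it suffices to establish the corresponding convergence for the count $\nod_V^{\M_\rho}(f_T,k)$ of components lying in $\M_\rho$ and then let $\rho\to 0$.

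On $\M_\rho$ the vector field $V$ is uniformly non-vanishing. Writing a Riemannian analogue of the integral-geometric sandwich (Lemma \ref{l:integro_geom_sandwich}), for every $R>0$ fixed we have
\begin{equation*}
\frac{1}{\vol(B(R/T))}\int_{\M_\rho}\nod_V(f_T,k,x,R/T)\,dx\;\le\;\nod_V^{\M_\rho}(f_T,k)\;\le\;\frac{1}{\vol(B(R/T))}\int_{\M_\rho^+}\nod_V^*(f_T,k,x,R/T)\,dx,
\end{equation*}
where $\M_\rho^+$ is a slight thickening and $\nod^*$ relaxes the containment condition. Components crossing the boundary of the radius-$R/T$ balls contribute only $\mathcal{O}(T)$ in expectation by Lemma \ref{lem:Kac Rice Euclid}(2) applied after scaling, hence $\mathcal{O}(1/(RT))$ after the $\vol$ normalisation, which vanishes in the limits taken below. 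Divide the whole inequality by $c_{2,\alpha}\vol(\M)T^2$ so that, after noting that $\vol(B(R/T))T^2=\vol(B(R))\cdot(1+o(1))$ uniformly in $x$, the integrand becomes precisely the quantity appearing in \eqref{eq:double lim R,T conv loc}.

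At this point Theorem \ref{t:conv_prob} asserts that, for every $x\in\M_\rho$, the integrand converges in probability to $C_k$ as $T\to\infty$ followed by $R\to\infty$. To upgrade the pointwise convergence to $L^1$-convergence of the integral (in both $|\cdot|_\pm$ directions separately), I will invoke dominated convergence in $x$ combined with Fubini: the random integrand is uniformly bounded in $L^1(\Omega\times\M_\rho)$ by Lemma \ref{lem:Kac Rice manifold}, which gives uniform integrability and allows interchange of expectation and integration. The positive-part bound $\E[|\cdot|_+]\to 0$ follows from the sandwich upper estimate combined with $\E[\nod_V(f_T,k,x,R/T)]\le \E[\nod(f_{x,T},R)]=\mathcal{O}(R^2)$ (Lemma \ref{lem:Kac Rice Euclid}(1)); the negative-part bound $\E[|\cdot|_-]\to 0$ follows from the sandwich lower estimate, bearing in mind that the limit measure is a probability by Theorem \ref{thm:euclid_count}(4), which precludes escape of mass from the local count. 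Finally, the $\rho$-error $\mathcal{O}(\rho^2)$ is driven to zero.

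The main obstacle is the simultaneous handling of the non-uniform direction field $V$ and the boundary effects: Theorem \ref{t:conv_prob} supplies convergence at each fixed $x\in\M_\rho$, but the error is a priori not uniform in $x$, so the interchange of limits $T\to\infty$ with the integral $\int_{\M_\rho}$ is delicate. I would resolve this by partitioning $\M_\rho$ into finitely many coordinate charts on which the coefficients of $V$ together with the covariance kernel $K_T$ and its derivatives admit uniform expansions in $1/T$ (essentially uniform versions of \eqref{eq:covar func band lim der conv} and Lemma \ref{l:non_commuting}); the stability estimates of Proposition \ref{p:stab_k_even} and the high-probability event $E$ in \eqref{eq:E good event def} then can be taken uniformly over $x$ in a compact subset of $\M_\rho$, which is exactly what is needed to pass the limit under the integral.
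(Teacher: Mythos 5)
There is a genuine gap at the step where you upgrade the convergence in probability supplied by Theorem \ref{t:conv_prob} to convergence in mean. You assert that the random integrand ``is uniformly bounded in $L^1(\Omega\times\M_\rho)$ by Lemma \ref{lem:Kac Rice manifold}, which gives uniform integrability'' --- but a uniform $L^1$ bound does \emph{not} imply uniform integrability, and this is precisely where the argument breaks: on the exceptional event where the normalized local count deviates from $C_k$, you have no control on how large that count can be beyond its expectation, so $\E[|X_T-C_k|_+]$ need not tend to $0$ even though $X_T\to C_k$ in probability and $\sup_T\E[X_T]<\infty$. No second-moment bound for nodal component counts is available in this framework, so the step cannot be patched by Chebyshev either.

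The paper circumvents this by first excising the $\xi$-small and $D$-long components (Proposition \ref{p:conv_in_mean_normal}, together with Lemmas \ref{l:small_vol} and \ref{l:long_components}): for $(D,\xi)$-normal components the normalized local count is \emph{deterministically} bounded by $\Oc(\xi^{-c_0})$, since each such component bounds a nodal domain of volume at least $\xi T^{-2}$, and then the contribution of the bad event $\Delta_{T,k,x,R}$ is at most $\xi^{-c_0}\cdot\sup_x\Prob[\Delta_{T,k,x,R}]$, which is made small by an Egorov-type argument rendering \eqref{eq:double lim R,T conv loc} almost uniform over a set $\M_\epsilon$ of nearly full measure. The excised small and long components are then controlled in expectation by Lemmas \ref{l:small_vol} and \ref{l:long_components} upon letting $\xi\to0$ and $D\to\infty$. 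Your use of the integral-geometric sandwich and your excision of a neighbourhood of $\{V=0\}$ are in line with the paper's proof, and your proposed chart-wise uniformisation of the local estimates is a plausible (if heavier) alternative to the Egorov step; but without the small/long-component excision the uniform-integrability claim on which your proof rests is unsupported.
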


\begin{proof}[Proof of Theorem \ref{t:main_thm_less_equal_1} assuming Proposition \ref{p:almost_main_theorem}]

Theorem \ref{t:main_thm_less_equal_1} follows immediately from Proposition \ref{p:almost_main_theorem}.

\end{proof}

\subsection{Excising small volume and long components}

\begin{defn}
Let $\xi, D> 0$ be parameters and $f_T$ the band-limited functions \eqref{eq:fT band lim alpha<1} (or \eqref{eq:fT band lim alpha=1}).

\begin{enumerate}
\item A component of $f^{-1}_T(0)$ is \textit{$\xi$-small} if it is a boundary of nodal domain whose volume in $\M$ is less than $\xi T^{-2}$.  Let $\nod_{\xi-sm}(f_T)$ be the total number of $\xi$-small components of $f_T$ on $\M$.
\item For $D>0$, a component of $f^{-1}_T(0)$ is \textit{$D$-long} if its diameter is greater than $D/T$. Let $\nod_{D-long}(f_T)$ be their total number.
\item Given the parameters $D, \xi>0$, a component of $f^{-1}_T(0)$ is \textit{$(D, \xi)$-normal}, if it is neither $\xi$-small nor $D$-long.
\item Let $\nod_{norm}(f_T)$ be the total number of $(D,\xi)$-normal components of $f^{-1}_T(0)$ (analogously, define $\nod_{V, norm}(f_T, k)$ as the count for the subset of $(D,\xi)$-normal components whose number of tangencies is precisely $k$ with respect to $V$).
\item  Let $\nod_{V, norm}(f_T, x, r)$ be the total number of $(D,\xi)$-normal components of $f^{-1}_T(0)$ that are completely contained inside the geodesic ball $B(x;r)$ (respectively, $\nod_{V,norm}(f_T, k, x, r)).$
\item  Let $\nod^*_{norm}(f_T, x, r)$ be the total number of $(D,\xi)$-normal components of $f^{-1}_T(0)$ that intersect the geodesic ball $B(x;r)$ (respectively, $\nod^*_{V,norm}(f_T, k, x, r)).$
\end{enumerate}
\end{defn}

The following two lemmas are taken directly out of \cite{So12}:

\begin{lem}[Cf. {\cite[Lemma 9]{So12}}]
There exists constants $c_0, C_0 >0$ so that the following estimate on the number of $\xi$-small components holds:
\begin{equation*}
\limsup\limits_{T \rightarrow \infty} \frac{\E \left[  \nod_{\xi-sm}(f_T) \right]}{T^2} \leq C_0 \, \xi^{c_0}.
\end{equation*}
\end{lem}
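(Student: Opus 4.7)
The plan is to reduce the global Riemannian count of $\xi$-small components of $f_T$ to the Euclidean count for the scaling limit $\mathfrak{g}_\alpha$, which is already controlled by Lemma \ref{l:small_vol}. The argument proceeds by a covering/scaling/coupling scheme analogous to the one used throughout Section 6, combined with a Riemannian integral-geometric sandwich.

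First, fix a large parameter $R > 0$ (to be chosen relative to $\xi$) and cover $\M$ by geodesic balls $\{B(x_j, R/T)\}_{j \leq N_T}$ of bounded overlap, so that $N_T = \mathcal{O}(T^2 \vol(\M)/R^2)$. A Riemannian analogue of Lemma \ref{l:integro_geom_sandwich} would bound the global $\xi$-small count by a local integral,
\begin{equation*}
\nod_{\xi-sm}(f_T) \leq \frac{C}{\vol(B(R/T))} \int_\M \nod^*_{\xi-sm}(f_T, x, R/T) \, d\vol_\M(x),
\end{equation*}
modulo the components that straddle boundaries of balls in the cover. The point of the scaling is that a nodal domain of $f_T$ of volume $\leq \xi T^{-2}$ in $\M$ corresponds, under the map $y = \exp_x(I_x(u/T))$, to a Euclidean nodal domain of $f_{x, T}$ of area $\leq \xi (1 + o(1))$, since the pullback of the Riemannian volume form equals the Euclidean volume form up to factors $1 + \mathcal{O}(R/T)$ uniformly on $B(0, R)$.

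Second, I would introduce the stability event of Definition \ref{d:standard_events}, retaining in particular $\Delta_1^\complement$, so that $\|f_{x, T} - \mathfrak{g}_\alpha\|_{C^2(B(2R))} < b$ with probability $> 1 - \delta$, for $b = b(R,\delta)$ small. To handle the fact that a component bounding a small area need not have small perimeter, I would first use the analogue of Lemma \ref{l:long_components} for $\mathfrak{g}_\alpha$ to discard components of diameter exceeding some intermediate $R_1 \ll R$, at a cost of $\mathcal{O}(R^2 / R_1)$ components. For the remaining components, Proposition \ref{p:stab_trans} (or the corresponding nodal domain stability arguments from \cite{SW18,BW17}) ensures that under the $C^2$-perturbation $\mathfrak{g}_\alpha \leadsto f_{x, T}$, each nodal component shifts in $L^\infty$ by $\mathcal{O}(b)$, and therefore the enclosed area changes by $\mathcal{O}(b \cdot R_1)$. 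Taking $b \ll \xi / R_1$, each $\xi$-small component of $f_{x, T}$ of bounded diameter corresponds to a $(2\xi)$-small component of $\mathfrak{g}_\alpha$.

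Third, I would take expectations and invoke the Euclidean estimate. On the stability event this gives
\begin{equation*}
\E\bigl[\nod^*_{\xi-sm}(f_{x, T}, 2R)\bigr] \leq \E\bigl[\nod_{2\xi-sm}(\mathfrak{g}_\alpha, 2R)\bigr] + \mathcal{O}(R^2/R_1) + \delta \cdot \mathcal{O}(R^2),
\end{equation*}
where the last term uses Lemma \ref{lem:Kac Rice Euclid} (1) to bound the total nodal count on the exceptional event by $\mathcal{O}(R^2)$. Applying Lemma \ref{l:small_vol} and substituting back into the sandwich, the factors of $R^2$ cancel against $\vol(B(R/T))^{-1}$, yielding
\begin{equation*}
\limsup_{T \to \infty} \frac{\E[\nod_{\xi-sm}(f_T)]}{T^2} \leq C_0'\bigl(\xi^{c_0} + R_1^{-1} + \delta\bigr),
\end{equation*}
after which sending $R_1 \to \infty$ and $\delta \to 0$ (in that order, and independently of $\xi$) produces the desired $C_0 \, \xi^{c_0}$.

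The main obstacle is precisely the one addressed in the second step: the notion of ``$\xi$-smallness'' is not automatically stable under $C^2$-perturbation because thin, long components bound small area but have large perimeter, so $\mathcal{O}(b)$ perturbations of their position can change the enclosed area by more than $\xi$. Excising long components using the Riemannian analogue of Lemma \ref{l:long_components} first, and then calibrating $b$ against $R_1$, is the key technical maneuver needed to make the perturbation estimate uniform.
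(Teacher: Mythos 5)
The paper does not actually prove this lemma: it is imported verbatim from Sodin's lectures (\cite[Lemma 9]{So12}), where the argument is direct and purely local. In a nodal domain $G$ of $f_T$ with $\vol(G)<\xi T^{-2}$, the point where $|f_T|$ attains its maximum over $\overline{G}$ is an interior critical point, and on the event that the suitably scaled $C^1$ norm is bounded, the maximal value itself must be $O(\sqrt{\xi})$ (otherwise $G$ would contain a ball of too large a volume). The expected number of critical points with critical value $O(\sqrt{\xi})$ is $O(\sqrt{\xi}\,T^2)$ by Kac--Rice, and the complementary event is absorbed by Cauchy--Schwarz against the total count. Your proposal instead routes everything through the coupling with $\gfr_\alpha$ and the perturbation-stability machinery; this is a legitimate strategy in principle, but it is considerably heavier, and since Lemma \ref{l:small_vol} is itself Sodin's Lemma 9 for the limiting field, you are replacing a two-step local argument with the full local-to-global apparatus.

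More importantly, the key quantitative step of your second stage is not justified as written. When a component $\gamma$ of $\gfr_\alpha^{-1}(0)$ is displaced by $O(b)$ in $L^\infty$, the area of the symmetric difference of the enclosed domains is controlled by $b$ times the \emph{length} of $\gamma$, not by $b$ times its diameter; discarding components of diameter $>R_1$ does not bound the perimeter by $R_1$, since a simple closed nodal curve confined to a ball of radius $R_1$ can have length of order $R_1^2 M_0/\beta_{1,1}$ even on the stability event, and nothing better is available deterministically. Hence the calibration $b\ll\xi/R_1$ is insufficient; you would need $b\ll \xi\beta_{1,1}/(M_0 R_1^2)$ together with an explicit deterministic length bound on the event $\Delta_2^{\complement}\cap\Delta_4^{\complement}$. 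A second, smaller gap: Proposition \ref{p:stab_trans} gives an injection from stable components of $\gfr_\alpha$ into components of $f_{x,T}$, whereas your inequality needs the reverse inclusion --- that every $\xi$-small component of $f_{x,T}$, up to $\eta R^2$ exceptions, is the image of some component of $\gfr_\alpha$ --- which requires running the perturbation argument with the roles of the two fields exchanged, or invoking Lemma \ref{p:unstable_comp} to control the newly created components. Both points are repairable, but as written the area-comparison step would fail precisely for thin, serpentine components, which are the ones a small-volume count must control.
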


\begin{lem}[Cf. {\cite[Lemma 8]{So12}}]
There exists a constant $C_0 >0$ such that the following bound holds on the number of $D$-long components:
\begin{equation*}
\limsup\limits_{T \rightarrow \infty} \frac{\E \left[  \nod_{D-long}(f_T) \right]}{T^2} \leq C_0 \, \frac{1}{D}.
\end{equation*}
\end{lem}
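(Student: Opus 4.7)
The plan is to follow the Nazarov-Sodin integral-geometric scheme of \cite[Lemma 8]{So12}, scaled to the Riemannian setting at the natural wavelength $1/T$. The guiding intuition is that every $D$-long nodal component has a large $r$-tubular neighborhood, and the sum of these tubular volumes is dual, by Fubini, to integrating a pointwise ``components meeting $B(x,r)$'' count over $\M$; the latter in turn admits a uniform Kac-Rice bound.

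First I would fix $r = 1/T$, for $T$ large enough so that $r$ is well below the injectivity radius of $\M$. For any nodal component $\gamma$ with $\diam(\gamma) > D/T$, connectedness and smoothness of $\gamma$ imply it contains a continuous arc of length at least $D/T$; a standard tubular-volume estimate then gives
\begin{equation*}
\vol(\gamma_r) \; \geq \; c_1 \cdot r \cdot \diam(\gamma) \; \geq \; c_1 \cdot D/T^2,
\end{equation*}
where $\gamma_r := \{x \in \M : \dist(x, \gamma) < r\}$ and $c_1 > 0$ depends only on the geometry of $(\M, g)$. Writing $\nod^*(f_T, x, r)$ for the number of nodal components of $f_T$ that merely intersect $B(x, r)$, Fubini yields
\begin{equation*}
\int_\M \nod^*(f_T, x, r) \, dx \; = \; \sum_{\gamma \subset f_T^{-1}(0)} \vol(\gamma_r) \; \geq \; c_1 \cdot \frac{D}{T^2} \cdot \nod_{D-long}(f_T).
\end{equation*}

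Next I would bound $\E[\nod^*(f_T, x, r)]$ pointwise, uniformly in $x \in \M$. Splitting $\nod^* \leq \nod + \widetilde{\nod}$, where $\widetilde{\nod}(f_T, x, r)$ counts components that cross $\partial B(x, r)$, Lemma \ref{lem:Kac Rice manifold} gives the interior bound $\E[\nod(f_T, x, r)] = \Oc(T^2 r^2) = \Oc(1)$. The boundary count is handled by the Riemannian analog of Lemma \ref{lem:Kac Rice Euclid} (2): passing to the scaled coordinates via $\exp_x \circ I_x$, the unit circle $\partial B(0, T r) = \partial B(0,1)$ carries the scaled field $f_{x,T}$ whose joint distribution with its gradient is, by \eqref{eq:covar func band lim der conv}, uniformly close to that of $\gfr_\alpha$. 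A Kac-Rice application to $f_{x,T}|_{\partial B(0,1)}$ therefore yields $\E[\widetilde{\nod}(f_T, x, r)] = \Oc(Tr) = \Oc(1)$, uniformly in $x$ and $T$. Combining,
\begin{equation*}
c_1 \cdot \frac{D}{T^2} \cdot \E[\nod_{D-long}(f_T)] \; \leq \; \int_\M \E[\nod^*(f_T, x, r)] \, dx \; = \; \Oc(\vol(\M)),
\end{equation*}
so dividing through by $c_1 D$ gives $\E[\nod_{D-long}(f_T)]/T^2 \leq C_0/D$ uniformly in $T$ large, which upon taking $\limsup$ yields the claim.

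The main point requiring care is the uniform Kac-Rice bound on $\partial B(x, r)$: one needs the joint distribution of $(f_T, \nabla_{\tan} f_T)$ along the circle to be non-degenerate with densities bounded uniformly in $x \in \M$ and $T$. This is a direct consequence of the non-degeneracy of the limit field $\gfr_\alpha$ (axioms $(\rho 2),(\rho 3)$), the $C^\infty$-uniform scaling \eqref{eq:covar func band lim der conv}, and compactness of $\M$; these ingredients are already in play in the local analyses of \S\ref{s:stab_est}. Once this uniformity is in hand, the chain of inequalities above closes the argument.
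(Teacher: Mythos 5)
Your argument is correct, and it is essentially the argument the paper is implicitly relying on: the paper does not prove this lemma at all but imports it verbatim from \cite[Lemma 8]{So12}, whose proof is the same Nazarov--Sodin scheme you describe, just phrased with a discrete bounded-multiplicity covering of $\M$ by radius-$1/T$ geodesic balls (a long component must cross $\gtrsim D$ of the boundary circles, and the expected number of components crossing each circle is $\Oc(1)$ by Kac--Rice on the restriction of $f_T$ to the circle) rather than with your continuous tubular-neighbourhood/Fubini version. The two are equivalent; your version trades the covering bookkeeping for the lower bound $\vol(\gamma_r)\geq c_1 r\,\diam(\gamma)$, which does require $r\lesssim \diam(\gamma)$, i.e.\ $D\gtrsim 1$ --- but for bounded $D$ the claim is anyway trivial from $\E[\nod(f_T)]=\Oc(T^2)$, so nothing is lost. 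You are also right that the one genuinely new ingredient relative to the lemmas stated in the paper is the uniform-in-$x$ Riemannian analogue of Lemma \ref{lem:Kac Rice Euclid}~(2) for geodesic circles, which follows from \eqref{eq:covar func band lim der conv} and compactness exactly as you say. Worth noting: in this $2$-dimensional setting there is an even shorter route, which the paper itself alludes to when discussing the Euclidean twin Lemma \ref{l:long_components} --- a $D$-long component has length $\geq 2D/T$, while the expected total nodal length is $\Oc(T)$ by Kac--Rice, giving $\E[\nod_{D-long}(f_T)]=\Oc(T^2/D)$ in one line; the boundary-circle machinery is only needed in higher dimensions.
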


We can now state the main proposition of this section:

\begin{prop} \label{p:conv_in_mean_normal}
Let $\xi, D>0$ be given and $k \ge 0$. Then as $T \rightarrow \infty$, we have that
\begin{equation*}
\E \left[  \left| \frac{ \nod_{V,norm}\left( f_T,  k  \right)}{c_{2, \alpha} \cdot \vol(\M) \cdot T^2} - C_k \right|_{\pm}  \right] \rightarrow 0,
\end{equation*}
for $| \cdot |_{\pm}$ as in Definition \ref{def:plus_minus}, and $C_{k}=C_{\alpha,k}$ is in Proposition \ref{p:almost_main_theorem}.
\end{prop}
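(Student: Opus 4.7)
The plan is to reduce the global count on $\M$ to an integral of local Euclidean counts via an integral-geometric sandwich, invoke the local scaling result Theorem \ref{t:conv_prob}, and handle the finitely many zeros of $V$ by excision.

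First I would formulate a Riemannian version of Lemma \ref{l:integro_geom_sandwich} adapted to $(D,\xi)$-normal components. Fix $R > D$ and set $r = R/T$. Each $(D,\xi)$-normal component $\gamma \subseteq f_T^{-1}(0)$ has diameter at most $D/T < r$, so the volume-comparison argument in the proof of Lemma \ref{l:integro_geom_sandwich}, together with $\vol(B(x,r)) = \pi r^2 (1 + o_{T\to\infty}(1))$ uniformly in $x\in\M$, gives
\begin{equation*}
\frac{1+o(1)}{\pi r^2}\int_{\M} \nod_{V,norm}(f_T,k,x,r)\, dx \;\le\; \nod_{V,norm}(f_T,k) \;\le\; \frac{1+o(1)}{\pi r^2}\int_{\M} \nod^*_{V,norm}(f_T,k,x,r)\, dx.
\end{equation*}
The lower and upper inequalities will supply, respectively, the $|\cdot|_-$ and $|\cdot|_+$ one-sided convergence statements.

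Second I would excise the singular set of $V$. For a small parameter $\rho > 0$ let $\M^{\rho} := \M \setminus \bigcup_i B(p_i, \rho)$, where $\{p_i\}_{i=1}^{m} = \{V=0\}$. On the compact set $\M^{\rho}$ the vector field $V$ is uniformly bounded away from zero, so Theorem \ref{t:conv_prob} applies pointwise in $x$:
\begin{equation*}
\frac{\nod_V(f_T, k, x, R/T)}{c_{2,\alpha}\cdot \vol(B(R))} \;\xrightarrow[T\to\infty]{\ \Prob\ }\; C_k.
\end{equation*}
The Kac-Rice bound of Lemma \ref{lem:Kac Rice manifold} furnishes the domination $\E[\nod(f_{x,T},R/T)] = O(R^2)$ uniformly in $(x,T)$, upgrading the in-probability convergence to $L^1$-convergence; commuting expectation and $\int_{\M^{\rho}}dx$ via Fubini and recalling the identity $\pi r^2 T^2 = \pi R^2$ then yields
\begin{equation*}
\E\Bigl[\Bigl|\frac{1}{c_{2,\alpha}\vol(\M)\pi R^2}\int_{\M^{\rho}}\nod_V(f_T,k,x,R/T)\, dx \;-\; C_k\cdot\frac{\vol(\M^{\rho})}{\vol(\M)}\Bigr|\Bigr] \;\xrightarrow{T\to\infty}\; 0.
\end{equation*}

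Third I would estimate the discarded contributions. The integral over $\bigcup_i B(p_i,\rho)$ of the local nodal counts is, by Lemma \ref{lem:Kac Rice manifold}, at most $O(T^2\rho^2)$ in expectation, so it contributes $O(\rho^2)$ after normalisation. The passage from $\nod_V$ to $\nod_{V,norm}$ throws away $D$-long and $\xi$-small components; their totals are controlled by the two excision lemmas preceding this proposition and contribute $O(1/D) + O(\xi^{c_0})$ uniformly in $T$. The boundary discrepancy $\nod^*_{V,norm} - \nod_{V,norm}$ arises from components meeting $\partial B(x,R/T)$, which by Lemma \ref{lem:Kac Rice Euclid}(2) contributes $O(R)$ per base point and hence $O(1/R)$ after averaging. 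Letting $T\to\infty$, then $R\to\infty$, then $\rho\to 0$, $D\to\infty$, $\xi\to 0$ in that order collapses all error terms to zero, giving the result for both $|\cdot|_{\pm}$.

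The hardest step is the uniform-in-$x$ $L^1$ convergence underlying the second step: Theorem \ref{t:conv_prob} is a pointwise statement whose quantitative rate is a priori not uniform in the base point. However, all exceptional events of \S\ref{s:exceptional_events} depend on $x\in\M^{\rho}$ only through continuous data (the vector $V(x)$ and its derivatives, and the local covariance structure of $f_T$) that are uniformly controlled on the compact set $\M^{\rho}$, so the rate in Theorem \ref{t:conv_prob} is in fact uniform on $\M^{\rho}$ and the dominated convergence argument goes through.
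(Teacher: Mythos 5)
Your architecture (integral-geometric sandwich, excision of $\{V=0\}$, local input from Theorem \ref{t:conv_prob}, and the small/long excision lemmas) matches the paper's, but two steps do not hold as written. First, the upgrade from convergence in probability to convergence in mean: the bound $\E[\nod(f_{x,T},R/T)]=\mathcal{O}(R^2)$ from Lemma \ref{lem:Kac Rice manifold} controls the first moment but does not furnish ``domination'' or uniform integrability --- a family can converge in probability with uniformly bounded means and still fail to converge in $L^1$. The paper obtains uniform integrability only because it works with the \emph{normal} counts throughout: a $(D,\xi)$-normal component bounds a nodal domain of volume at least $\xi T^{-2}$, so the number of such components in $B(x,(R+D)/T)$ is bounded \emph{pathwise} (deterministically, for every sample) by $\mathcal{O}_{\xi}(R^{2})$; this pathwise bound is exactly the $\xi^{-c_0}$ factor appearing in the paper's five-integral decomposition and is what allows the in-probability convergence to be integrated. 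Your proposal performs the $L^1$ upgrade on the untruncated count $\nod_{V}$ before discarding small components, where no such pathwise bound exists; the truncation to normal components must come first.

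Second, the claimed uniformity over the compact set $\M^{\rho}$ of the rate in Theorem \ref{t:conv_prob} is asserted rather than proved. The thresholds $R_0$, $T_0$, $\beta_{1,i}$, $\beta_2$ in the local analysis all depend on the base point (through $\|V(x)\|$ and the locally uniform but not explicitly quantified convergence of the scaled covariance kernel), and making them uniform would mean rerunning the entire local machinery with uniform constants --- something the paper explicitly flags as an unaddressed refinement in its discussion of uniform convergence with respect to $V$. The paper sidesteps uniformity altogether with an Egorov-type argument adapted to the double limit: for each $\epsilon$ it extracts $\M_{\epsilon}$ of measure $>1-\sqrt{\pi}\epsilon/5$, disjoint from a $\sqrt{\rho/2}$-neighbourhood of $\{V=0\}$, on which the convergence is almost uniform along a subsequence $R_j$ attaining the $\liminf$, and it controls the complement $\M\setminus\M_{\epsilon}$ and the bad event $\Delta_{T,k,x,R}$ by the same deterministic $\xi^{-c_0}$ bound. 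Adopting that device closes the gap without having to establish uniformity in $x$.
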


\begin{proof}[Proof of Proposition \ref{p:almost_main_theorem} assuming Proposition \ref{p:conv_in_mean_normal}]
This is immediate given lemmas \ref{l:small_vol} and \ref{l:long_components}.
\end{proof}

\subsection{Proof of Proposition \ref{p:conv_in_mean_normal}}

We begin with a global analogue of Lemma \ref{l:integro_geom_sandwich}.

\begin{lem} \label{l:integro_geom_sandwich_global}
Given $\epsilon > 0$ and $k \ge 0$, there exists $r_0$ such that for all $r < r_0$, we have
\begin{align*}
& (1 - \epsilon) \int\limits_{\M} \frac{\nod_{V,norm}\left( f_T,  k, x, r  \right)}{\vol(B(r))} \, dx \leq \nod_{V,norm}\left( f_T,  k \right)\leq (1 + \epsilon) \int\limits_{\M} \frac{\nod^*_{V,norm}\left( f_T,  k, x, r  \right)}{\vol(B(r))} \, dx.
\end{align*}
\end{lem}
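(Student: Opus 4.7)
The plan is to adapt the classical integral-geometric sandwich argument used in the proof of Lemma \ref{l:integro_geom_sandwich} to the Riemannian setting, with the only additional ingredient being a uniform control on the deviation of $\vol(B(x,r))$ from the Euclidean ball volume $\vol(B(r))$, as $x$ varies over $\M$. First, for each $(D,\xi)$-normal component $\gamma\subset f_T^{-1}(0)$ with precisely $k$ $V$-tangencies, I would define the sets
\begin{equation*}
G_*(\gamma) = \{x \in \M : \gamma \subset B(x,r)\}, \qquad G^*(\gamma) = \{x \in \M : \gamma \cap B(x,r) \ne \emptyset\}.
\end{equation*}
For any $y \in \gamma$, the chain of inclusions $G_*(\gamma) \subset B(y,r) \subset G^*(\gamma)$ then gives $\vol(G_*(\gamma)) \leq \vol(B(y,r)) \leq \vol(G^*(\gamma))$.

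Next I would use the smoothness and compactness of $\M$: in normal coordinates one has $\vol(B(y,r)) = \vol(B(r)) \cdot (1 + O(r^2))$, with the implicit constant uniform in $y \in \M$ (controlled by the sectional curvature bounds). Hence there exists $r_0 > 0$ such that for every $r < r_0$ and every $y \in \M$,
\begin{equation*}
(1-\epsilon)\vol(B(r)) \leq \vol(B(y,r)) \leq (1+\epsilon)\vol(B(r)).
\end{equation*}
Summing the two-sided bound $\vol(G_*(\gamma)) \leq \vol(B(y,r)) \leq \vol(G^*(\gamma))$ over all $(D,\xi)$-normal components $\gamma$ having exactly $k$ $V$-tangencies produces
\begin{equation*}
\sum_\gamma \vol(G_*(\gamma)) \leq (1+\epsilon)\vol(B(r))\cdot \nod_{V,norm}(f_T,k), \qquad (1-\epsilon)\vol(B(r))\cdot \nod_{V,norm}(f_T,k) \leq \sum_\gamma \vol(G^*(\gamma)).
\end{equation*}

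To finish, I would apply Fubini's theorem to swap the (finite, almost surely) sum over components with the integral over $\M$, using the identities
\begin{equation*}
\vol(G_*(\gamma)) = \int_\M \mathbf{1}_{\{\gamma \subset B(x,r)\}}\, dx, \qquad \vol(G^*(\gamma)) = \int_\M \mathbf{1}_{\{\gamma \cap B(x,r) \ne \emptyset\}}\, dx,
\end{equation*}
which upon interchanging summation and integration yield
\begin{equation*}
\sum_\gamma \vol(G_*(\gamma)) = \int_\M \nod_{V,norm}(f_T,k,x,r)\, dx, \qquad \sum_\gamma \vol(G^*(\gamma)) = \int_\M \nod^*_{V,norm}(f_T,k,x,r)\, dx.
\end{equation*}
Dividing through by $\vol(B(r))$ and rearranging (replacing $\epsilon$ by $\epsilon/2$ up front if needed to absorb the $(1\pm\epsilon)$ factors into the final statement) gives exactly the claimed two-sided sandwich.

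The main potential obstacle is ensuring that the above manipulations are legitimate: measurability of the indicator functions $\mathbf{1}_{\{\gamma \subset B(x,r)\}}$ jointly in $x$ and the sample function $f_T$ (so Fubini applies), together with the a.s. finiteness of the number of components (which holds on the manifold by compactness and Lemma \ref{lem:Kac Rice manifold}). Both are routine once one notes that each nodal component is closed and that the relevant counts have finite expectation. A secondary, but purely geometric, subtlety is the uniform volume comparison in Step 2, which is standard once one fixes a finite normal coordinate atlas on the compact $\M$.
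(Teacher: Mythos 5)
Your proposal is correct and follows precisely the route the paper intends: the paper omits the proof, stating it is "almost exactly the same" as the Euclidean sandwich of Lemma \ref{l:integro_geom_sandwich}, and your argument is exactly that adaptation, with the $(1\pm\epsilon)$ factors correctly traced to the uniform comparison $\vol(B(y,r))=\vol(B(r))(1+O(r^2))$ on the compact surface. The Fubini interchange and the measurability/finiteness caveats you flag are handled the same way as in the Euclidean case, so nothing further is needed.
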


\begin{proof}
As the proof is almost exactly the same as that of Lemma \ref{l:integro_geom_sandwich}, we omit it and leave the details to the reader.
\end{proof}

\begin{proof}[Proof of Proposition \ref{p:conv_in_mean_normal}]
Once again, our proof mirrors that of \cite[Proposition 3]{BW17} so we give the major steps of the proof, with particular details given at the critical junctures, and leave the remaining similar details to the interested reader.  For convenience, we assume that $\vol(\M)=1$.
Let $\epsilon > 0$ and $k \ge 0$ be given. Consider a regime of $R,T$ such that $R/T < r_0 < 1$ as in Lemma \ref{l:integro_geom_sandwich_global} and set $D = \sqrt{R}$ so that
\begin{equation*}
\frac{\vol(B(R+D))}{\vol(B(R))} < 1 + \epsilon.
\end{equation*}

We focus on the case for $| \cdot |_+$ as the proof for $| \cdot |_{-}$ is similar. We begin by applying Lemma \ref{l:integro_geom_sandwich_global} with $r = R/T$. It follows that
\begin{align*}
 \E \left[  \left| \frac{ \nod_{V,norm}\left( f_T,  k  \right)}{c_{2, \alpha} \cdot \vol(\M) \cdot T^2} - C_k \right|_{+}  \right] &\leq \E \left[  \int\limits_{\M} \left|(1 + 2\epsilon) \frac{\nod^*_{V,norm}\left( f_T,  k, x, R/T  \right)}{c_{2, \alpha} \cdot \vol(B(R+D))} \, dx - C_k  \right|_{+} \, dx  \right] \\
& \leq \E \left[  \int\limits_{\M} \left|\frac{\nod_{V,norm}\left( f_T,  k, x, (R+D)/T  \right)}{c_{2, \alpha} \cdot \vol(B(R+D))} - C_k  \right|_{+} \, dx  \right] \\
& + \mathcal{O} \left( \epsilon \cdot \int\limits_{\M} \frac{\E \left[\nod_{V,norm}\left( f_T,  k, x, (R+D)/T  \right) \right]}{ c_{2, \alpha} \cdot \vol(B(R+D))} \, dx \right).
\end{align*}
Note that
\begin{equation*}
\int\limits_{\M} \frac{\E \left[\nod_{V,norm}\left( f_T,  k, x, (R+D)/T  \right) \right]}{ c_{2, \alpha} \cdot \vol(B(R+D))} \, dx  =  \mathcal{O}(1)
\end{equation*}
by \cite[Lemma 2]{So12} on using Lemma \ref{lem:Kac Rice manifold} to obtain
an upper bound for the total number of components. Thus, our focus is on proving that
\begin{equation*}
\E \left[  \int\limits_{\M} \left|\frac{\nod_{V,norm}\left( f_T,  k, x, (R+D)/T  \right)}{c_{2, \alpha} \cdot \vol(B(R+D))} - C_k  \right|_{+} \, dx  \right]  \rightarrow 0.
\end{equation*}

Let $\rho < \frac{\epsilon}{5\sqrt{\pi} N}$, where $N$ is the cardinality of $\{ V = 0 \}$, be a parameter that will control how close we are allowed to approach the zero set $\{V=0\}$. Let $\xi >0$ be a small number, to be specified shortly, that controls how small a volume a nodal component can encompass and will also be related to our given $\epsilon$ later.
We now consider the event
\begin{equation*}
\Delta_{T,k,x,R} := \left\{ \left| \frac{\nod_{V,norm}\left( f_T,  k, x, (R+D)/T  \right)}{c_{2, \alpha} \cdot \vol(B(R+D))}  - C_k  \right| > \frac{\epsilon}{5} \right\}.
\end{equation*}
Let $\{ V= 0\}^{\sqrt{\rho/2}}$ be a neighborhood consisting of balls of radius $\sqrt{\rho/2} < inj(\M)$ centered at the finitely many points of $\{V=0\}$. An application of an Egorov-type Theorem w.r.t. the double limit $\lim\limits_{R\rightarrow\infty}\lim\limits_{T\rightarrow\infty}$
(as carried out in \cite[Proposition 3]{BW17}), along with Theorem \ref{t:conv_prob} allows us to conclude that on $\M \setminus (\{ V= 0\}^{\sqrt{\rho/2}})$, there exists a set $\M_{\epsilon} \subset \M$ with $\vol(\M_{\epsilon}) > 1 - \sqrt{\pi} \frac{\epsilon}{5}$ (with the additional property that $\M_{\epsilon} \cap \{ V= 0\}^{\sqrt{\rho/2}} = \emptyset$ thanks to taking a tubular neighborhood of radius $\sqrt{\rho/2}$), such that
\begin{equation}
\label{eq:double limit liminf=0}
\liminf_{R \rightarrow \infty} \limsup_{T \rightarrow \infty} \sup\limits_{x \in \M_{\epsilon}} \mathbb{P}\left[  \Delta_{T,k,x,R} \right] = 0,
\end{equation}
i.e. the limit \eqref{eq:double lim R,T conv loc} is {\em almost} uniform along some sequence $R_{j}\rightarrow\infty$ attaining the $\liminf\limits_{R\rightarrow\infty}$ in \eqref{eq:double limit liminf=0}, so that
\begin{equation}
\label{eq:double limit R attain liminf}
\lim\limits_{j \rightarrow \infty} \limsup_{T \rightarrow \infty} \sup\limits_{x \in \M_{\epsilon}} \mathbb{P}\left[  \Delta_{T,k,x,R_{j}} \right] = 0.
\end{equation}
Without loss of generality, we can restrict ourselves to a regime of $R,T$ where there exists a large enough $C=C_{\M} >0$ such that for all ratios $R/T \leq \frac{inj(\M)}{C}$, we have that $\mbox{vol}_g B(x, R/T) = \pi \frac{R^2}{T^2} + o(\frac{R^2}{T^2})$.

We re-express
\begin{align} \label{e:expect_int_geom}
& \E \left[  \int\limits_{\M} \left|\frac{\nod_{V,norm}\left( f_T,  k, x, (R+D)/T  \right)}{c_{2, \alpha} \cdot \vol(B(R+D))} - C_k  \right|_{+} \, dx  \right] \\
\nonumber & = \int\limits_{\Omega} \int\limits_{\M}  \left|\frac{\nod_{V,norm}\left( f_T,  k, x, (R+D)/T  \right)}{c_{2, \alpha} \cdot \vol(B(R+D))} - C_k  \right|_{+} \, dx \, d \mathbb{P}(\omega).
\end{align}
An application of Fubini's Theorem along with the measure bounds for the sets $\Delta_{T,k,x,R}$ and $\M_{\epsilon}$ reduces bounding (\ref{e:expect_int_geom}) to understanding the sum
\begin{equation*}
\left( \int\limits_{\M} \int\limits_{\Delta_{T,k,x,R}} + \int\limits_{\M} \int\limits_{\Omega \cap \Delta_{T,k,x,R}^{\complement}} \right) \underbrace{\left|\frac{\nod_{V,norm}\left( f_T,  k, x, (R+D)/T  \right)}{c_{2, \alpha} \cdot \vol(B(R+D))} - C_k  \right|_{+}\, d \mathbb{P}(\omega)   \, dx }_{=: F \, d \mu} .
\end{equation*}
This the sum of integrals breaks up into a further $5$ integrals in total and we estimate each accordingly:
\begin{equation*}
\int\limits_{\M_{\epsilon}} \int\limits_{\Delta_{T,k,x,R}} F  \, d \mu \leq \xi^{-c_0} \cdot \sup\limits_{x \in \M_{\epsilon}} \mathbb{P}[\Delta_{T,k,x,R}],
\end{equation*}
\begin{equation*}
\int\limits_{\M\setminus\M_{\epsilon}} \int\limits_{\Delta_{T,k,x,R}} F  \, d \mu \leq \frac{\epsilon}{5} \cdot \xi^{-c_0} \cdot \sup\limits_{x \in \M_{\epsilon}} \mathbb{P}[\Delta_{T,k,x,R}],
\end{equation*}
\begin{equation*}
\int\limits_{\M_{\rho}} \int\limits_{\Delta_{T,k,x,R}^{\complement}} F  \, d \mu \leq \frac{\epsilon}{5},
\end{equation*}
\begin{equation*}
\int\limits_{  \{V=0\}^{\sqrt{\rho/2}} } \int\limits_{\Delta_{T,k,x,R}^{\complement}} F  \, d \mu \leq C_{\M} \cdot \frac{\epsilon^2}{50}, \mbox{ thanks to the finiteness of } \{V=0\} \mbox {, and }
\end{equation*}
\begin{equation*}
\int\limits_{\M\setminus \left( \M_{\rho} \cup (\{V=0\}^{\sqrt{\rho/2}} \right)} \int\limits_{\Delta_{T,k,x,R}^{\complement}} F  \, d \mu \leq C_{\M} \cdot \frac{\epsilon^2}{25}, \mbox{ as }  \left( \M\setminus  \M_{\rho} \right) \cap (\{V=0\}^{\sqrt{\rho/2}}  \neq \emptyset \mbox{ is possible}
\end{equation*}
where $C_{\M} > 0$ is an absolute constant depending only on the volume of $\M$, which we have normalized to $1$.

Keeping in mind that $\epsilon$ was given to us, we restrict the auxiliary parameters introduced in our proof as follows: 1) $\xi < (\frac{\epsilon}{5})^{1/c_0}$, 2) take $R_0$ such that for all $R_{j}  \geq R_0$ with $\{R_{j}\}_j$ as in \eqref{eq:double limit R attain liminf} (which in turn, as in the proof of Theorem \ref{t:conv_prob}, requires us to carefully select our parameters $\beta_{1,1}, \beta_{1,2}, \beta_{1,3}, \beta_2$ each of which themselves depends on $\eta$), we know there exists $T_0(R_{j})$ (implicitly depending upon our string of stability parameters $\beta$) such that for all $T \geq T_0$ we have $ \sup\limits_{x \in \M_{\rho}} \mathbb{P}[\Delta_{T,k,x,R_{j}}] < \frac{\epsilon \xi^{c_0}}{5}$. We conclude that \eqref{e:expect_int_geom} is bounded above by $\epsilon$. This completes the proof of our proposition.
\end{proof}

We conclude by noting that our spectral parameter $T_0$ being dependent on the previous parameters $\beta_{1,2}$ and $\epsilon$ is a reflection of how our covering of $\M \setminus \M_{\epsilon}$, by geodesic disks of radius $R/T$, is governed by the maximal order of vanishing amongst all zeroes of $V$.

\end{document}